\def\@Rref#1{\hbox{\rm \ref{#1}}}
\def\Rref#1{\@Rref{#1}}
\theoremstyle{plain}
\newtheorem{theorem}{Theorem}[section]
\newtheorem{proposition}[theorem]{Proposition}
\newtheorem{lemma}[theorem]{Lemma}
\theoremstyle{definition}
\newtheorem{definition}{Definition}[section]
\newtheorem{example}[definition]{Example}
\newtheorem{remark}[definition]{Remark}
\newcommand{\Sect}{\mathop{\rm Sect}}
\newcommand{\re}{\mathop{\rm Re}\nolimits}
\newcommand{\im}{\mathop{\rm Im}\nolimits}
\newcommand{\vertiii}[1]{{\vert\kern-0.25ex\vert\kern-0.25ex\vert #1 
		\vert\kern-0.25ex\vert\kern-0.25ex\vert}_{\mathcal{B}_0,q}}
\newcommand{\bigvertiii}[1]{{\big\vert\kern-0.25ex\big\vert\kern-0.25ex\big\vert #1 
		\big\vert\kern-0.25ex\big\vert\kern-0.25ex\big\vert}_{\mathcal{B}_0,q}}
\newcommand{\biggvertiii}[1]{{\bigg\vert\kern-0.35ex\bigg\vert\kern-0.35ex\bigg\vert #1 
		\bigg\vert\kern-0.35ex\bigg\vert\kern-0.35ex\bigg\vert}}
\begin{document}

\title[Characterizations of the Crandall--Pazy Class]{Characterizations of the Crandall--Pazy Class of 
	$C_0$-semigroups on Hilbert Spaces and Their Application to 
	Decay Estimates}

\thispagestyle{plain}

\author{Masashi Wakaiki}
\address{Graduate School of System Informatics, Kobe University, Nada, Kobe, Hyogo 657-8501, Japan}
 \email{wakaiki@ruby.kobe-u.ac.jp}

\begin{abstract}
	We investigate immediately differentiable 
	$C_0$-semigroups 
	$(e^{-tA})_{t \geq 0}$
	satisfying \linebreak  $\sup_{0 < t <1}
	t^{1/\beta}\|Ae^{-tA}\| < \infty$ for some $0 < \beta \leq 1$.
	Such $C_0$-semigroups are referred to as 
	the Crandall--Pazy class of $C_0$-semigroups.
	In the Hilbert space setting,
	we present two characterizations of the Crandall--Pazy class.
	We then apply
	these characterizations to estimate decay rates for
	Crank--Nicolson schemes with smooth initial data
	when
	the associated abstract Cauchy problem is governed by 
	an exponentially stable $C_0$-semigroup in the Crandall--Pazy class.
	The first approach is based on a functional 
	calculus called the $\mathcal{B}$-calculus.
	The second approach builds upon estimates derived from Lyapunov equations and improves the decay estimate
	obtained in the first approach,  under the additional assumption that
	$-A^{-1}$ generates a bounded $C_0$-semigroup.
\end{abstract}

\subjclass[2020]{Primary 47D06 $\cdot$ 46N40; Secondary 47A60 $\cdot$ 65J08}
\keywords{$C_0$-semigroup, Cayley transform, 
	Crank--Nicolson scheme, 
	Functional calculus} 

\maketitle

	\section{Introduction}
Consider the abstract Cauchy problem
\begin{equation}
	\label{eq:ACP}
	\begin{cases}
		\dot u (t) = -Au(t), & t \geq 0, \\
		u(0) = x,& x \in X,
	\end{cases}
\end{equation}
where $-A$ is the generator of a $C_0$-semigroup
$(e^{-tA})_{t \geq 0}$ on a Banach space $X$.
We say that $(e^{-tA})_{t \geq 0}$ is {\em 
	immediately differentiable} if
the orbit map $t \mapsto u(t) = e^{-tA}x$ is differentiable on $(0,\infty)$
for all $x \in X$.
Recall that $(e^{-tA})_{t \geq 0}$ is holomorphic if
and only if 
$\|Ae^{-tA}\| = O(t^{-1})$ as $t \to 0+$, i.e.,
there exist constants $M,t_0>0$ such that
$\|Ae^{-tA}\| \leq Mt^{-1}$ for all $0 < t < t_0$;
see, e.g., \cite[Theorem~3.7.19]{Arendt2001} and 
\cite[Theorem~II.4.6]{Engel2000}.
Crandall and Pazy \cite{Crandall1969} introduced 
the following class of $C_0$-semigroups, which lie
between immediately differentiable $C_0$-semigroups
and holomorphic $C_0$-semigroups.
\begin{definition}
		A
		$C_0$-semigroup $(e^{-tA})_{t \geq 0}$
		on a Banach space $X$
		is in the 
		{\em Crandall--Pazy class with parameter $\beta \in (0,1]$}
		if
		$(e^{-tA})_{t \geq 0}$ is immediately differentiable and 
		\begin{equation}
			\label{eq:CP_cond}
			\big\|Ae^{-tA}\big\| = O\left(
			\frac{1}{
				t^{1/\beta}}\right)\quad \text{as $t \to 0+$}.
		\end{equation}
\end{definition}
The Crandall--Pazy class with some parameter $\beta \in(0,1]$
is simply referred to as  the {\em Crandall--Pazy class}.
Note that $C_0$-semigroups in the Crandall--Pazy class with parameter
$\beta = 1$ corresponds to holomorphic $C_0$-semigroups.
We encounter
the Crandall--Pazy class of $C_0$-semigroups when examining the  differentiability of mild solutions of 
inhomogeneous Cauchy problems \cite{Crandall1969} and delay differential equations \cite{Batty2004,Batty2007}.
This class also arises in a necessary condition
for the existence and uniqueness of classical solutions of 
semilinear Cauchy problems \cite{Eberhardt1994}.

If the $C_0$-semigroup
$(e^{-tA})_{t \geq 0}$  on a Banach space is in the Crandall--Pazy class with parameter $\beta \in (0,1]$,
then the spectrum
$\sigma(A)$ of $A$ is contained in
the set $\{ z \in \mathbb{C} : \re z \geq -a + b |\im z|^{\beta} \}$
for some $a,b>0$ and
\begin{equation}
	\label{eq:CP_resolvent}
	\| (i\eta + A)^{-1}\| = O\left(
	\frac{1}{
		|\eta|^{\beta}}\right)\quad \text{as $|\eta| \to \infty$};
\end{equation}
see \cite{Crandall1969}.
Conversely,	it was also shown in \cite{Crandall1969} that 
the estimate \eqref{eq:CP_resolvent} for some 
$\beta \in (0,1]$ implies 
$\|Ae^{-tA}\| = O(t^{-2\beta+1})$ as $t \to 0+$.
This estimate was improved  to
$\|Ae^{-tA}\| = O(t^{-\beta + \varepsilon})$ as $t \to 0+$
for all $\varepsilon>0$ in \cite{Eberhardt1994}.
These results provide a characterization of the Crandall--Pazy
class without specifying the parameter $\beta$, 
which was applied to
perturbation theory for $C_0$-semigroups in
\cite{Arendt2006,Chen2020JEE}. 
Another characterization based on
resolvents 
can be found in \cite{Iley2007}.

In this paper, we focus on $C_0$-semigroups on Hilbert spaces.
Our first main 
result is stated as follows in the simplified case
where the $C_0$-semigroup $(e^{-tA})_{t \geq 0}$ is bounded, i.e., $\sup_{t \geq 0}\|e^{-tA}\| < \infty$; 
see Theorem~\ref{thm:decay_growth}
for the case where $(e^{-tA})_{t \geq 0}$ is not bounded.
\begin{theorem}
	\label{thm:simplified}
	Let $-A$ be the generator of a bounded $C_0$-semigroup $(e^{-tA})_{t \geq 0}$ on a Hilbert space $H$ such that 
	$\sigma(A) \cap i \mathbb{R} = \emptyset$.
	Then the following statements are equivalent for a fixed $\beta \in (0,1]$:
	\begin{enumerate}
		\renewcommand{\labelenumi}{(\roman{enumi})}
		\item $\|Ae^{-tA}\| = O(
		t^{-1/\beta})$ as $t \to 0+$.
		\item $\| (i\eta + A)^{-1}\| = O(
		|\eta|^{-\beta})$ as $|\eta| \to \infty$.
		\item For each $q \in (0,1/2)$, there exists 
		a constant $M>0$ such that for all $x \in H$,
		\begin{equation*}
			\sup_{\xi > 1} \xi^{1-2q}
			\int_{-\infty}^{\infty}
			\|A^{\beta q}(\xi + i \eta +A)^{-1}x \|^2 d\eta \leq M \|x\|^2.
		\end{equation*}
	\end{enumerate}
\end{theorem}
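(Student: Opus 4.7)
I would prove the theorem via the cycle (i)$\Rightarrow$(ii)$\Rightarrow$(iii)$\Rightarrow$(i). The first implication is the classical Crandall--Pazy result recalled in the introduction and requires no new work. The substantive content lies in the Hilbert-space reverse direction, which must recover the sharp exponent $1/\beta$ in (i) rather than the weaker Banach-space exponents $1-2\beta$ and $\beta-\varepsilon$ mentioned above.

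The central analytic tool throughout is Plancherel's theorem for $H$-valued $L^2$-functions. Applied to $t\mapsto \mathbf{1}_{(0,\infty)}(t)\,e^{-\xi t}\,A^{\beta q}e^{-tA}x$, whose Fourier transform in $t$ equals $\eta\mapsto A^{\beta q}(\xi+i\eta+A)^{-1}x$, it yields the identity
\[
\int_{-\infty}^{\infty}\bigl\|A^{\beta q}(\xi+i\eta+A)^{-1}x\bigr\|^2 d\eta
= 2\pi\int_0^{\infty}e^{-2\xi t}\bigl\|A^{\beta q}e^{-tA}x\bigr\|^2 dt.
\]
For (ii)$\Rightarrow$(iii) the strategy is to bound $\|A^{\beta q}(\xi+i\eta+A)^{-1}\|$ via the factorisation $A^{\beta q}(\xi+i\eta+A)^{-1} = A^{\beta q}(\xi+i\eta+A)^{-\beta q}\cdot(\xi+i\eta+A)^{-(1-\beta q)}$, where the first factor is controlled by Balakrishnan's representation of fractional powers and the second factor by (ii) (after an elementary Neumann-type extension of (ii) from $\xi = 0$ to $\xi > 0$). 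Integration over $\eta$ together with the weight $\xi^{1-2q}$ then produces the required bound by a direct Gamma-type computation.

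The heart of the argument is (iii)$\Rightarrow$(i). Using the Plancherel identity in the opposite direction, (iii) is equivalent to the weighted time-domain estimate
\[
\sup_{\xi>1}\xi^{1-2q}\int_0^{\infty}e^{-2\xi t}\bigl\|A^{\beta q}e^{-tA}x\bigr\|^2 dt \le M\|x\|^2.
\]
From this I would extract the pointwise bound $\|A^{\beta q}e^{-tA}\|\le C\,t^{-q}$ by choosing $\xi\asymp 1/t$ and using the semigroup identity $A^{\beta q}e^{-sA} = e^{-(s-t)A}A^{\beta q}e^{-tA}$ for $s\ge t$, together with boundedness of $(e^{-tA})_{t\ge 0}$, to replace the integrand over a short interval by its value at $s = t$. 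A bootstrap based on the decomposition $Ae^{-tA}= A^{1-\alpha}e^{-(t/2)A}\cdot A^{\alpha}e^{-(t/2)A}$, chained over finitely many fractional exponents and initialised with $\alpha=\beta q$ for $q$ close to $1/2$, then lifts the estimate to the full power $A$ and yields $\|Ae^{-tA}\|=O(t^{-1/\beta})$.

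\textbf{Main obstacle.} The delicate point is carrying out (iii)$\Rightarrow$(i) without losing any amount in the exponent. Both the extraction of a pointwise bound from the exponentially weighted $L^2$-integral and the bootstrap from fractional to full powers of $A$ are typically prone to $\varepsilon$-losses of the Eberhardt type; the challenge is to arrange these steps so that the exponents close up exactly, giving $1/\beta$ rather than $1/\beta-\varepsilon$ or a multiple such as $2/\beta$, and to keep the constants uniform in $q$ as $q\nearrow 1/2$ so that the endpoint exponent can actually be reached.
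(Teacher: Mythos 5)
Your step (iii)$\Rightarrow$(i) is essentially the paper's argument: Plancherel (or, in the paper, the resolvent inversion formula plus Cauchy--Schwarz, which avoids the circularity of applying Plancherel to $t\mapsto A^{\beta q}e^{-tA}x$ before knowing $e^{-tA}H\subset D(A^{\beta q})$), the choice $\xi\asymp 1/t$, and a moment-inequality bootstrap. One correction there: no limit $q\nearrow 1/2$ is needed. The paper's Lemma~\ref{lem:domain_decay} shows that for a \emph{single} fixed $q\in(0,1/2)$ the bound $\|A^{\beta q}e^{-tA}\|=O(t^{-q})$ is already \emph{equivalent} to $\|Ae^{-tA}\|=O(t^{-q/(\beta q)})=O(t^{-1/\beta})$, because $\|A^{\gamma}e^{-tA}\|^{\delta}\asymp\|A^{\delta}e^{-ctA}\|^{\gamma}$; the exponents close up exactly and your stated ``main obstacle'' is not actually an obstacle.

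The genuine gap is in (ii)$\Rightarrow$(iii). Statement (iii) is a vector-valued $L^2_\eta$ bound, and it cannot be obtained by estimating $\|A^{\beta q}(\xi+i\eta+A)^{-1}\|$ in operator norm pointwise in $\eta$ and integrating, for two reasons. First, the factor $A^{\beta q}(\xi+i\eta+A)^{-\beta q}$ is \emph{not} uniformly bounded over $\mathbb{C}_+$ under (ii) when $\beta<1$: for a normal $A$ with eigenvalues $\lambda_n=|\eta_n|^{\beta}+i\eta_n$ (which satisfies (ii)), taking $\lambda=1-i\eta_n$ gives $\|A^{\gamma}(\lambda+A)^{-\gamma}\|\geq |\lambda_n|^{\gamma}/|\lambda+\lambda_n|^{\gamma}\asymp|\eta_n|^{\gamma(1-\beta)}\to\infty$; Balakrishnan controls $A^{\gamma}(\xi+A)^{-\gamma}$ for $\xi>0$ real, not for $\lambda$ near $i\mathbb{R}$ where the sectors of $A$ and of $\lambda+\cdot$ overlap. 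Second, and more fundamentally, in the same example $\int_{-\infty}^{\infty}\|A^{\beta q}(\xi+i\eta+A)^{-1}\|^2\,d\eta$ \emph{diverges} (each eigenvalue contributes a bump of mass $\asymp|\lambda_n|^{2\beta q}/\xi_n\asymp|\eta_n|^{\beta(2q-1)}$, and the sum over $n$ diverges) even though (iii) holds for every fixed $x$; so no pointwise operator-norm bound, however obtained, can yield (iii). The paper instead proves (ii)$\Rightarrow$(i) — the substantive step, via $\sup_{\lambda\in\mathbb{C}_+}\|B^{\beta}(\lambda+B)^{-1}\|<\infty$ for a shift $B$ of $A$ and the Plancherel-based argument of \cite[Theorem~4.7]{Batty2016} giving $\|B^{\beta}e^{-tB}\|=O(1/t)$ — and then gets (i)$\Rightarrow$(iii) in the time domain, applying Plancherel to $t\mapsto e^{-\xi t}A^{\beta q}e^{-tA}x$ together with the pointwise bound $\|A^{\beta q}e^{-tA}\|\lesssim t^{-q}$. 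You would need to reroute your cycle through (i) in this way.
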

In the Hilbert space setting, the implication
(ii) $\Rightarrow$ (i) improves the existing estimates mentioned above.
On the other hand, statement 
(iii) is in a form similar to the characterization for
bounded $C_0$-semigroups established in \cite{Gomilko1999,Shi2000}. 
Theorem~\ref{thm:simplified} 
is analogous to the following characterizations of
bounded $C_0$-semigroups with polynomial decay
on Hilbert spaces (see \cite{Borichev2010,Wakaiki2024JEE}), and the  techniques developed for
such $C_0$-semigroups in \cite{Borichev2010,Batty2016,Wakaiki2024JEE} are employed 
in the proof of Theorem \ref{thm:simplified}.
\begin{theorem}
	\label{thm:polynomial_decay}
	Let $-A$ be the generator of a bounded $C_0$-semigroup $(e^{-tA})_{t \geq 0}$ on a Hilbert space $H$ such that 
	$\sigma(A) \cap i \mathbb{R} = \emptyset$.
	Then the following statements are equivalent for a fixed $\beta >0$:
	\begin{enumerate}
		\renewcommand{\labelenumi}{(\roman{enumi})}
		\item $\|e^{-tA}A^{-1}\| = O(
		t^{-1/\beta})$ as $t \to \infty$.
		\item $\| (i\eta + A)^{-1}\| = O(
		|\eta|^{\beta})$  as $|\eta| \to \infty$.
		\item For each $q \in (0,1/2)$, there exists 
		a constant $M>0$ such that for all $x \in H$,
		\begin{equation*}
			\sup_{0 < \xi < 1} \xi^{1-2q}
			\int_{-\infty}^{\infty}
			\|(\xi + i \eta +A)^{-1}A^{-\beta q}x \|^2 d\eta \leq M \|x\|^2.
		\end{equation*}
	\end{enumerate}
\end{theorem}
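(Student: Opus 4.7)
The plan is to establish the cycle (i) $\Rightarrow$ (ii) $\Rightarrow$ (iii) $\Rightarrow$ (i). The first implication (i) $\Rightarrow$ (ii) is the classical Crandall--Pazy result \cite{Crandall1969}, valid in any Banach space and already recalled in the introduction, so it requires no new input.

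For both of the remaining implications the main tool is the Plancherel identity
\[
\int_{-\infty}^{\infty}\|A^{\beta q}(\xi+i\eta+A)^{-1}x\|^{2}\,d\eta \;=\; 2\pi\int_{0}^{\infty}e^{-2\xi t}\|A^{\beta q}e^{-tA}x\|^{2}\,dt \qquad (\xi>0),
\]
which converts (iii) into a weighted $L^{2}$-in-time estimate on the orbit. To prove (ii) $\Rightarrow$ (iii) I would follow the Hilbert space strategy of \cite{Borichev2010,Batty2016,Wakaiki2024JEE}: combining the Gomilko--Shi square-function bound
\[
\sup_{\xi>0}\;\xi\int_{-\infty}^{\infty}\|(\xi+i\eta+A)^{-1}x\|^{2}\,d\eta\leq M\|x\|^{2}
\]
(which reflects the boundedness of $(e^{-tA})_{t\geq 0}$) with the extra resolvent decay $|\eta|^{-\beta}$ from (ii) and the boundedness of the auxiliary factor $A^{\beta q}(1+A)^{-\beta q}$ (available because $\sigma(A)\cap i\mathbb{R}=\emptyset$ makes $A$ invertible). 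Splitting the $\eta$-integral at $|\eta|\sim\xi$ and using each ingredient on the appropriate piece should yield exactly the $\xi^{2q-1}$ scaling required in (iii).

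For (iii) $\Rightarrow$ (i) I would again invoke Plancherel to rewrite (iii) as
\[
\int_{0}^{\infty}e^{-2\xi t}\|A^{\beta q}e^{-tA}x\|^{2}\,dt\;\leq\;\frac{M\|x\|^{2}}{\xi^{1-2q}},\qquad \xi>1.
\]
For $\tau\in(0,1)$, the choice $\xi=1/\tau$ produces $\int_{0}^{\tau}\|A^{\beta q}e^{-tA}x\|^{2}\,dt\lesssim\tau^{1-2q}\|x\|^{2}$, and writing $A^{\beta q}e^{-\tau A}x=e^{-(\tau-t)A}A^{\beta q}e^{-tA}x$ for $t\in[\tau/2,\tau]$ together with the boundedness of the semigroup converts this into the pointwise bound $\|A^{\beta q}e^{-tA}\|=O(t^{-q})$ for every $q\in(0,1/2)$. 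Since this exponent $\beta q$ only reaches $\beta/2$, I would then bootstrap: applying (iii) with $x$ replaced by $A^{\gamma}e^{-(t/2)A}y$ promotes $\|A^{\gamma}e^{-tA}\|=O(t^{-\gamma/\beta})$ into $\|A^{\gamma+\beta q}e^{-tA}\|=O(t^{-(\gamma/\beta+q)})$ at each step; after finitely many iterations (roughly $\lceil 2/\beta\rceil$ of them) one reaches $\gamma=1$, recovering (i).

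The hardest point will be the extraction of the sharp $\xi^{1-2q}$ weight in (ii) $\Rightarrow$ (iii). A naive operator interpolation of (ii) yields only a decay $|\eta|^{-\beta(1-q)}$ for $\|A^{\beta q}(\xi+i\eta+A)^{-1}\|$, which fails to be $L^{2}$-integrable at infinity precisely when $\beta<1$ and $q$ is chosen close to $1/2$. Overcoming this requires the decomposition of the $\eta$-range at $|\eta|\sim\xi$ together with the Hilbert space square-function estimates of \cite{Gomilko1999,Shi2000}, which is the point at which the Hilbert space hypothesis becomes indispensable and which explains the close analogy with Theorem \ref{thm:polynomial_decay}.
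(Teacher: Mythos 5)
You have written a proof of the wrong theorem: everything in your proposal is aimed at Theorem~\ref{thm:simplified} (the Crandall--Pazy class, small-time behaviour), not at Theorem~\ref{thm:polynomial_decay} (polynomial decay at infinity), which is the statement under review. Concretely: statement (iii) here involves the \emph{negative} fractional power $A^{-\beta q}$ and a supremum over $0<\xi<1$, whereas your Plancherel identity, your weighted bound for $\xi>1$, your choice $\xi=1/\tau$ with $\tau\in(0,1)$, and your conclusion $\|A^{\beta q}e^{-tA}\|=O(t^{-q})$ all concern the positive power $A^{\beta q}$, the range $\xi>1$, and the limit $t\to 0+$. Likewise, statement (ii) here asserts resolvent \emph{growth} $O(|\eta|^{\beta})$, while you argue with ``the extra resolvent decay $|\eta|^{-\beta}$''; and the implication (i) $\Rightarrow$ (ii) of Theorem~\ref{thm:polynomial_decay} is not the Crandall--Pazy result but the Batty--Duyckaerts theorem \cite{Batty2008}. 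None of your displayed estimates applies to the statement as given.

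That said, the architecture you describe (Plancherel, the Gomilko--Shi square-function bound \cite{Gomilko1999,Shi2000}, a splitting of the $\eta$-integral, and a moment-inequality bootstrap on fractional powers) is the right kind of argument, and it does transfer to Theorem~\ref{thm:polynomial_decay} after the systematic replacements $A^{\beta q}\mapsto A^{-\beta q}$, $\xi>1\mapsto 0<\xi<1$, and $t\to 0+\mapsto t\to\infty$; this is how the sources the paper cites for this theorem, \cite{Borichev2010,Wakaiki2024JEE}, proceed, and how the paper itself proves the analogous Theorem~\ref{thm:decay_growth} via Lemma~\ref{lem:decay2resol_int} together with Lemma~\ref{lem:domain_decay} (the latter replaces your iterated bootstrap by a single scaling argument for fractional powers; its negative-power analogue is \cite[Lemma~4.2]{Batty2016}). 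As written, however, the proposal does not prove the stated theorem.
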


As an application of the above characterizations of the 
Crandall--Pazy class of $C_0$-semigroups, we study
the transference of decay rates in time-discretization of 
the abstract Cauchy problem~\eqref{eq:ACP}.
For time-discretization, we consider the following approximation 
with a fixed time-step $\tau >0$:
\begin{equation}
	\dot u(t) \approx \frac{u(t+\tau)- u(t)}{\tau}\quad \text{and} \quad 
	u(t) \approx \frac{u(t+\tau)+ u(t)}{2}.
\end{equation}
Then the discrete
approximation $u_{\textrm d}(n)$ of $u(n \tau)$ 
satisfies the difference equation
\[
\begin{cases}
	u_{\textrm d}(n+1)= -V_{2/\tau} (A) u_{\textrm d}(n), & n \in \mathbb{N}_0
	\coloneqq \{0,1,2,\dots \}, \\
	u_{\textrm d}(0) = x,& x \in X,
\end{cases}
\]
where 
\[
V_{2/\tau} (A) \coloneqq 
\left(A - 
\frac{2}{\tau}
\right)\left(
A + 
\frac{2}{\tau} 
\right)^{-1}.
\]
This time-discretization is known as the
{\em Crank--Nicolson scheme}.
The time-step $\tau$ is constant in the above setting for simplicity,
but it may vary depending on $n$. 
The operator $V_{2/\tau} (A) $, called 
the {\em Cayley transform of $A$}, is also widely used in system
theory for the transformation of
continuous-time systems into
discrete-time systems; see, e.g., \cite[Section~12.3]{Staffans2005}.
Let the 
$C_0$-semigroup $(e^{-tA})_{t \geq 0}$ on a Hilbert space 
be exponentially stable, i.e., $\lim_{t\to \infty}e^{ct}\|e^{-tA}\| =0$
for some $c >0$.
Our second goal is to estimate the rate of decay of
$u_{\textrm d}$ 
for smooth initial data $x$ in the domain $D(A)$ of $A$,
assuming that $(e^{-tA})_{t \geq 0}$ is  in
the Crandall--Pazy class.
To this end, we examine the quantitative behavior of 
the operator norm $\|(\prod_{k=1}^n
V_{\omega_k}(A)) A^{-1}\|$, where $0 < \omega_{\min}
\leq \omega_k \leq \omega_{\max} < \infty$ for $k \in \mathbb{N}$. 

There is a substantial body of literature on
the asymptotic behavior of the approximate solution 
$u_{\textrm d}$.
First, we briefly review known results for the constant case $
\omega_k \equiv 1$
and refer to the survey article \cite{Gomilko2017}
and the book \cite[Chapter~5]{Eisner2010}
for further information. Let $V(A) \coloneqq V_1(A)$.
It is clear that if $X$ is a finite-dimensional space, then 
boundedness is preserved, namely,
$\sup_{t \geq 0} \|e^{-tA}\| < \infty$ implies
$\sup_{n \in \mathbb{N}_0} \|V(A)^n\| < \infty$.
This preservation also holds for
contraction $C_0$-semigroups on Hilbert spaces \cite{Phillips1959} and
sectorially bounded 
holomorphic $C_0$-semigroups on Banach spaces \cite{Crouzeix1993}.
However, boundedness is not preserved in general
when
only the assumption of bounded or even exponentially stable
$C_0$-semigroups on Banach spaces is made; see, e.g., \cite{Brenner1979,Esiner2008JEE, Gomilko2011, Piskarev2007}.

For a bounded $C_0$-semigroup $(e^{-tA})_{t \geq 0}$ on a Hilbert space $H$,
it remains unknown
whether boundedness is preserved, but
the following estimates are known:
$\|V(A)^n\| = O(\log n)$ as $n \to \infty$ 
(see \cite{Gomilko2004}); and   
$\sup_{n \in \mathbb{N}_0} \|V(A)^n (1+A)^{-\alpha}\| < \infty$ for all $\alpha >0$ (see \cite[Lemma~2.3]{Gomilko2024}).
If we further assume that 
$-A^{-1}$ generates a bounded $C_0$-semigroup, 
then $\sup_{t \geq 0} \|e^{-tA}\| < \infty$ implies
$\sup_{n \in \mathbb{N}_0} \|V(A)^n\| < \infty$, which
was proved independently in \cite{Azizov2004,Gomilko2004,Guo2006}.
Under this assumption on $-A^{-1}$, it was  shown in 
\cite{Guo2006} that strong stability is preserved, i.e.,
if $e^{-tA}x \to 0$ as $t \to \infty$ for all $x \in H$, then
$V(A)^nx \to 0$ as $n \to \infty$ for all $x \in H$.
The semigroup-generation property of $-A^{-1}$ 
was also used in \cite{Wakaiki2021JEE}
to prove 
that if the bounded $C_0$-semigroup $(e^{-tA})_{t \geq 0}$ on a Hilbert space satisfies 
$\|e^{-tA}A^{-1}\| = O(
t^{-1/\beta})$ as $t \to \infty$ for some $\beta >0$, then
$\|V(A)^nA^{-1}\| = O(n^{-1/(2+\beta)} (\log n)^{1/(2+\beta)})$ as $n \to \infty$.
This estimate was refined  to 
$\|V(A)^nA^{-1}\| = O(n^{-1/(2+\beta)})$ as $n \to \infty$ in \cite{Pritchard2024}.
Moreover, without assuming that $-A^{-1}$ generates a bounded $C_0$-semigroup, the estimate
$\|V(A)^nA^{-1}\| = O(n^{-1/(2+\beta)} \log n)$ as $n \to \infty$
was obtained in \cite{Wakaiki2024JEE}.

Some of the estimates above were extended to the variable case $(\omega_k)_{k\in\mathbb{N}_0}$, where $0 < \omega_{\min} \leq \omega_k \leq 
\omega_{\max} < \infty$ for all $k \in \mathbb{N}$. 
If  $(e^{-tA})_{t \geq 0}$ is a sectorially bounded 
holomorphic $C_0$-semigroup on a Banach space, then
$\sup_{n \in \mathbb{N}_0} \|(\prod_{k=1}^n
V_{\omega_k}(A))\| < \infty$; see \cite{Palencia1993,Bakaev1995, Piskarev2007, Casteren2011, Batty2025}.
Boundedness is also preserved
if both $-A$ and $-A^{-1}$ generate a bounded $C_0$-semigroup on a Hilbert space $H$, as proven in \cite{Piskarev2007}.
Moreover, it was shown in \cite{Piskarev2007} that 
if $e^{-tA}x \to 0$ and $e^{-tA^{-1}}x \to 0$ 
as $t \to \infty$ for all $x \in H$, then
$(\prod_{k=1}^n
V_{\omega_k}(A))x \to 0$ as $n \to \infty$ for all $x \in H$.
For 
every exponentially stable $C_0$-semigroup  $(e^{-tA})_{t \geq 0}$
on a Hilbert space, the estimate 
\begin{equation}
	\label{eq:exp_case}
	\left\|\left(\prod_{k=1}^n
	V_{\omega_k}(A)\right) A^{-1}
	\right\| =  O
	\left( \frac{1}{n^{1/2}} \right)\quad \text{as $n \to \infty$}
\end{equation}
was derived in \cite{Wakaiki2024JEE}.
It was also proved in \cite{Wakaiki2024JEE} that 
if the negative generator $A$ of a bounded $C_0$-semigroup $(e^{-tA})_{t \geq 0}$ on a Hilbert space is a normal operator and satisfies
$\|e^{-tA}A^{-1}\| = O(
t^{-1/\beta})$ as $t \to \infty$ for some $\beta >0$,
then $\|(\prod_{k=1}^n
V_{\omega_k}(A)) A^{-1}\| = O(n^{-1/(2+\beta)})$ as $n \to \infty$.

It is natural to ask whether additional properties can
improve the decay estimate given in \eqref{eq:exp_case}
for exponentially stable $C_0$-semigroups on Hilbert spaces.
This question motivates the application of the characterizations
of the Crandall--Pazy class.
In this paper, 
we begin by briefly examining exponentially stable holomorphic $C_0$-semigroups
on Banach spaces and obtain the estimate
\begin{equation}
	\label{eq:holomorphic_case}
	\left\|\left(\prod_{k=1}^n
	V_{\omega_k}(A)\right) A^{-1}
	\right\| = O
	\left( \frac{1}{n} \right)\quad \text{as $n \to \infty$}.
\end{equation}
When comparing the estimates \eqref{eq:exp_case} and 
\eqref{eq:holomorphic_case}, there is a gap
between the decay rates $n^{-1/2}$ and $n^{-1}$.
We show that 
this gap is bridged by
the Crandall--Pazy class in the Hilbert space setting.
Specifically,
if the
exponentially stable $C_0$-semigroup 
$(e^{-tA})_{t \geq 0}$ on a Hilbert space $H$ is in
the Crandall--Pazy class with parameter $\beta \in (0,1]$, then
for all $\varepsilon >0$,
\begin{equation}
	\label{eq:CP_case1}
	\left\|\left(\prod_{k=1}^n
	V_{\omega_k}(A)\right) A^{-1}
	\right\| = O
	\left( \frac{(\log n)^{2/(2-\beta) + \varepsilon}}{n^{1/(2-\beta)}} \right)\quad \text{as $n \to \infty$}.
\end{equation}
It is still open whether the logarithmic term in the estimate
\eqref{eq:CP_case1} may be dropped in general.
However, we show that if 
$-A^{-1}$ generates
a bounded $C_0$-semigroup on $H$, then
\begin{equation}
	\label{eq:CP_case2}
	\left\|\left(\prod_{k=1}^n
	V_{\omega_k}(A)\right) A^{-1}
	\right\| = O
	\left( \frac{1}{n^{1/(2-\beta)}} \right)\quad \text{as $n \to \infty$}.
\end{equation}
We obtain the first estimate 
\eqref{eq:CP_case1} through the implication 
(i) $\Rightarrow$ (iii) in Theorem~\ref{thm:simplified} and 
the $\mathcal{B}$-calculus established in \cite{Batty2021,Batty2021JFA}.
On the other hand,
the second estimate 
\eqref{eq:CP_case2} is proved by combining the implication 
(i) $\Rightarrow$ (ii) in Theorem~\ref{thm:simplified} with
the estimate  derived from Lyapunov equations in \cite{Piskarev2007}.
Conversely, if $-A$ is the generator of a bounded $C_0$-semigroup
on a Hilbert space such that $\sigma(A) \cap i \mathbb{R} =\emptyset$ and if $\sup_{n \in \mathbb{N}_0} \|V(A)^n\|< \infty$, then
$\|V(A)^n A^{-1}\| = O(n^{-1/(2-\beta)})$ as $n \to \infty$
for some $\beta \in (0,1]$ implies $\|Ae^{-tA}\| = 
O(t^{-1/\beta})$ as $t \to 0+$.
This converse statement is obtained from 
the implication 
(ii) $\Rightarrow$ (i) in Theorem~\ref{thm:simplified}
and is analogous to the one obtained in \cite{Pritchard2024}
for bounded $C_0$-semigroups with polynomial decay
on Hilbert spaces.

This paper is organized as follows.
In Section~\ref{sec:charactrization}, 
we give characterizations of the Crandall--Pazy class of $C_0$-semigroups
on Hilbert spaces.
In Section~\ref{sec:Decay_estimate1},
we first develop a new operator-norm estimate for the $\mathcal{B}$-calculus,
which is tailored to the Crandall--Pazy class.
Next, we estimate the rate of decay for the Crank--Nicolson scheme
with smooth initial data.
In Section~\ref{sec:Decay_estimate2},
we assume that $-A^{-1}$ generates a bounded $C_0$-semigroup.
Then we 
improve the decay estimate given in Section~\ref{sec:Decay_estimate1}
and discuss transference from
decay estimates for $\|V(A)^nA^{-1}\|$ as $n \to \infty$ to
growth estimates for $\|Ae^{-tA}\|$ as $t\to 0+$.

\subsubsection*{Notation}

Let $\mathbb{N}$ and $\mathbb{N}_0$ denote 
the set of positive integers and the set of nonnegative integers,
respectively.
We write $\mathbb{C}_+ \coloneqq \{\lambda \in \mathbb{C}:
\re \lambda >0 \}$ and $\mathbb{R}_+ \coloneqq [0,\infty)$.
Let $\Sigma_{\theta} \coloneqq 
\{ z \in \mathbb{C} \setminus \{ 0\}: 
|\arg z| < \theta\}$ for $0 < \theta < \pi$
and let $\Sigma_0 \coloneqq (0,\infty)$.
For a subset $\Omega$ of 
$\mathbb{C}$, we denote by $\overline \Omega$
the closure of $\Omega$.
Given functions $f,g \colon [a,\infty) \to (0,\infty)$ for some $a \geq 0$, 
we write 
$f(t) = O(g(t))$ 
as $t \to \infty$
if there exist constants $M>0$ and $t_0 \geq a$ such that 
$f(t) \leq Mg(t)$ for all $t \geq t_0$.
Analogous notation is used for other variants of asymptotic behavior.
For $0 < \omega_{\min} \leq \omega_{\max} < \infty$,
let $\mathcal{S}(\omega_{\min},\omega_{\max})$ denote
the set of sequences $(\omega_k)_{k \in \mathbb{N}}$
of positive real numbers satisfying $\omega_{\min} \leq 
\omega_k \leq \omega_{\max}$ for all $k \in \mathbb{N}$.
The 
gamma function is denoted  by $\Gamma$.

Throughout this paper, all Banach spaces
are assumed to be complex.
Let $X$ be a Banach space. We write $\mathcal{L}(X)$ for
the Banach algebra of bounded linear operators on $X$.
Let $A$ be a linear operator on $X$. We denote 
the domain of $A$ by $D(A)$,
the spectrum of $A$ by
$\sigma(A)$, and 
the resolvent set of $A$ by
$\varrho(A)$.
For $\omega \in \varrho(-A) \cap (0,\infty)$, we define 
the Cayley transform $V_{\omega}(A)$ of $A$
with parameter $\omega$ by
$V_{\omega}(A) \coloneqq (A-\omega)(A+\omega)^{-1}$.
Let $H$ be a Hilbert space.
We denote the inner product on $H$ by
$\langle \cdot , \cdot \rangle$.
If $A$ is a densely defined linear operator on $H$, then
the Hilbert space adjoint of $A$ is denoted by $A^*$.

\section{Characterizations of Crandall--Pazy class of semigroups}
\label{sec:charactrization}
In this section, 
we start by recalling the notion of sectorial operators and 
the moment inequality.
We then present our first main result in 
Theorem~\ref{thm:decay_growth}, which provides two characterizations of 
the Crandall--Pazy class of $C_0$-semigroups on
Hilbert spaces. 
Both characterizations are used 
to estimate the rate of decay for
the Crank--Nicolson scheme in the subsequent sections.
\subsection{Moment inequality}
A densely defined closed operator $A$ on a Banach space $X$ is called 
{\em sectorial with angle $\theta \in [0,\pi)$} if 
$\sigma(A) \subset \overline{\Sigma_{\theta}}$ and
for each $\theta' \in (\theta,\pi)$,
\[
\sup\Big\{
\|\lambda (\lambda-A)^{-1}\| : 
\lambda \in \mathbb{C} \setminus \overline{\Sigma_{\theta'}}
\Big\} < \infty.
\]
An operator $A$ is simply called {\em sectorial} if it is sectorial
of angle $\theta$ for some $\theta \in [0,\pi)$.
For $\theta \in [0,\pi)$,
we denote by $\Sect(\theta)$ the set of
sectorial operators of angle $\theta$ on a Banach space $X$.
Define 
\[
\Sect(\pi/2-) \coloneqq \bigcup_{0\leq \theta < \pi/2} \Sect(\theta).
\]
For a sectorial operator $A$ and a constant $\alpha>0$, 
the fractional power $A^{\alpha}$
of $A$ is defined by the sectorial functional calculus; see \cite[Chapter~3]{Haase2006}.

We recall the moment inequality for sectorial operators; see, e.g., \cite[Proposition~6.6.4]{Haase2006}
for the proof.
\begin{lemma}
	\label{lem:moment_ineq}
	Let $A$ be a sectorial operator on a Banach space $X$, and let $0 \leq \alpha < \beta < \gamma$. There exists a constant $C>0$ such that 
	\[
	\|A^{\beta}x\| \leq C\|A^{\alpha} x\|^{(\gamma - \beta)/(\gamma - \alpha)}
	\|A^{\gamma} x\|^{(\beta - \alpha)/(\gamma - \alpha)}
	\]
	for all $x \in D(A^{\gamma})$.
\end{lemma}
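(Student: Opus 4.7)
My plan is to reduce the general inequality to the canonical case $\alpha=0$, $\gamma=1$, $\beta \in (0,1)$, and then handle that case by splitting the Balakrishnan integral representation of the fractional power and optimizing a single scalar parameter.

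\emph{Reduction.} First I would set $y := A^{\alpha}x$, let $\delta := \gamma - \alpha > 0$ and $\nu := (\beta-\alpha)/\delta \in (0,1)$, and invoke the composition rule of the sectorial functional calculus (see \cite[Chapter 3]{Haase2006}) to write $A^{\beta}x = (A^{\delta})^{\nu}y$ and $A^{\gamma}x = A^{\delta}y$ for $x \in D(A^{\gamma})$. Setting $B := A^{\delta}$, which is again sectorial, the desired inequality collapses to the canonical statement
\[
\|B^{\nu}y\| \leq C\,\|y\|^{1-\nu}\|By\|^{\nu}, \qquad y \in D(B),\ \nu \in (0,1),
\]
so it suffices to prove this.

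\emph{Main estimate.} For the canonical inequality, I would use the representation
\[
B^{\nu}y = \frac{\sin(\pi\nu)}{\pi}\int_0^{\infty} t^{\nu-1}B(t+B)^{-1}y\,dt, \qquad y \in D(B),
\]
and split the integral at a threshold $s>0$ to be chosen. On $(0,s)$ I would combine $B(t+B)^{-1} = I - t(t+B)^{-1}$ with the sectorial bound $\sup_{t>0}\|t(t+B)^{-1}\| < \infty$ to estimate the integrand by a constant times $t^{\nu-1}\|y\|$, contributing at most a constant multiple of $s^{\nu}\|y\|$. On $(s,\infty)$ I would instead commute $B$ past the resolvent, $B(t+B)^{-1}y = (t+B)^{-1}By$, and use $\|(t+B)^{-1}\| \leq M/t$ to bound the integrand by a constant times $t^{\nu-2}\|By\|$, contributing at most a constant multiple of $s^{\nu-1}\|By\|$. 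Summing and optimizing $s^{\nu}\|y\| + s^{\nu-1}\|By\|$ at $s = \|By\|/\|y\|$ (the case $y=0$ or $By=0$ being trivial or treated separately) delivers the claimed bound.

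\emph{Main obstacle.} The quantitative core, namely the two-piece split and the scalar optimization in $s$, is short; the subtlety is the structural justification of the Balakrishnan representation and of the composition law $(A^{\delta})^{\nu} = A^{\beta-\alpha}$ when $A$ may fail to be injective or have $0 \in \sigma(A)$. The standard remedy is to regularize $A$ by $A+\varepsilon$ (for which $0 \in \varrho(A+\varepsilon)$ and the integral representation is immediate), prove the estimate with a constant independent of $\varepsilon$, and pass to the limit $\varepsilon \downarrow 0$ using the convergence properties of the sectorial calculus from \cite[Chapter 3]{Haase2006}. Once this is accepted, the lemma follows from the reduction and the two-piece estimate above.
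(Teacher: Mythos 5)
The paper does not actually prove this lemma; it is quoted verbatim from \cite[Proposition~6.6.4]{Haase2006} and used as a black box, so there is no internal proof to compare against. Your argument is essentially the standard textbook proof of that result, and its quantitative core is sound: the Balakrishnan representation, the bounds $\|B(t+B)^{-1}y\|\leq (1+M)\|y\|$ near $t=0$ and $\|B(t+B)^{-1}y\|=\|(t+B)^{-1}By\|\leq Mt^{-1}\|By\|$ near $t=\infty$, the split at $s$, and the optimization $s=\|By\|/\|y\|$ are all correct and give exactly the canonical inequality $\|B^{\nu}y\|\leq C\|y\|^{1-\nu}\|By\|^{\nu}$. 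The $\varepsilon$-regularization you propose to handle non-injectivity is also the standard remedy and works.

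There is, however, one gap in the reduction that you did not flag. Setting $B\coloneqq A^{\delta}$ with $\delta=\gamma-\alpha$ and invoking the Balakrishnan formula and the composition law $(A^{\delta})^{\nu}=A^{\delta\nu}$ presupposes that $A^{\delta}$ is itself sectorial. If $A\in\Sect(\theta)$, then $A^{\delta}\in\Sect(\delta\theta)$ only when $\delta\theta<\pi$; for $\delta\theta\geq\pi$ (e.g. $\theta$ close to $\pi$ and $\gamma-\alpha>1$, a regime the lemma as stated allows, and which the paper actually needs in the proof of Lemma~\Rref{lem:domain_decay}, where the top exponent is arbitrary) the operator $A^{\delta}$ need not be sectorial and neither the composition law nor the integral representation for $(A^{\delta})^{\nu}$ is available. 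You identified injectivity and $0\in\sigma(A)$ as the structural obstacles but missed this angle obstruction. It is fixable by standard means: either replace the Balakrishnan formula by a Komatsu-type representation $A^{\mu}y=c_{n,\mu}\int_{0}^{\infty}t^{\mu-1}\bigl(A(t+A)^{-1}\bigr)^{n}y\,dt$ with $n>\mu$ applied directly to $A$ (which avoids forming $A^{\delta}$ altogether), or first prove the inequality for triples with $\gamma-\alpha$ small enough that $(\gamma-\alpha)\theta<\pi$ and then extend to arbitrary triples by the usual chaining argument for functions that are convex up to an additive constant on small scales. With either repair your proof is complete; as written, the reduction step does not cover the full generality of the statement.
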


Let $-A$ be the generator of
a bounded $C_0$-semigroup $(e^{-tA})_{t \geq 0}$ on a Banach space $X$, and
let $\alpha,t >0$.
Then $A$ is sectorial of angle $\pi/2$.
Since the fractional power $A^{\alpha}$ is a closed operator,
the closed graph theorem  shows that if
$e^{-tA} X \subset D(A^{\alpha})$, then $A^{\alpha} e^{-tA} \in \mathcal{L}(X)$.
The following lemma on fractional powers 
can be proved in the same way as \cite[Lemma~4.2]{Batty2016}
and \cite[Theorem~2, $(\mathrm{a}_{\alpha}) \Leftrightarrow
(\mathrm{b}_{\alpha})
$]{Eberhardt1994},
but to make the paper self-contained, we give a short argument.
\begin{lemma}
	\label{lem:domain_decay}
	Let $-A$ be the generator of a bounded
	$C_0$-semigroup $(e^{-tA})_{t \geq 0}$ on a Banach space $X$.
	Let $\alpha >0$ and assume that $e^{-tA}X \subset D(A^{\alpha})$
	for all $t>0$. Then 
	the following statements hold for all $\beta >0$:
	\begin{enumerate}
		\renewcommand{\labelenumi}{\alph{enumi})}
		\item $e^{-tA}X \subset D(A^\beta)$ for all $t>0$.
		\item There exist constants $C_1,C_2,c_1,c_2>0$ such that for all $t>0$,
		\[
		C_1 \big\|A^{\alpha} e^{-c_1tA}\big\|^{\beta} \leq 
		\big\|A^{\beta} e^{-tA}\big\|^{\alpha} \leq 
		C_2 \big\|A^{\alpha} e^{-c_2tA}\big\|^{\beta}.
		\]
	\end{enumerate}
\end{lemma}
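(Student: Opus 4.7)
The plan is to combine iteration of the semigroup with the moment inequality (Lemma~\ref{lem:moment_ineq}). Set $M \coloneqq \sup_{t \geq 0}\|e^{-tA}\|$; since $-A$ generates a bounded $C_0$-semigroup, $A$ is sectorial of angle $\pi/2$, so the fractional powers $A^\delta$ ($\delta > 0$) are well-defined closed operators, and by the closed graph theorem the hypothesis $e^{-tA}X \subset D(A^\alpha)$ upgrades to $A^\alpha e^{-tA} \in \mathcal{L}(X)$ for every $t>0$.

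For part (a), I would first bootstrap the inclusion $e^{-tA}X \subset D(A^\alpha)$ to $e^{-tA}X \subset D(A^{n\alpha})$ for every $n \in \mathbb{N}$. Using $e^{-tA} = (e^{-(t/n)A})^n$ and the commutativity of $A^\alpha$ with $e^{-sA}$ on $D(A^\alpha)$, a short induction on $n$ yields the bounded-operator factorization
\[
A^{n\alpha}\, e^{-tA} = \bigl(A^\alpha\, e^{-(t/n)A}\bigr)^n.
\]
Given $\beta > 0$, pick $n$ with $n\alpha > \beta$; Lemma~\ref{lem:moment_ineq} applied to the triple $(0,\beta,n\alpha)$ gives $D(A^{n\alpha}) \subset D(A^\beta)$, so $e^{-tA}X \subset D(A^\beta)$, proving (a).

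For the upper bound in (b), apply Lemma~\ref{lem:moment_ineq} with the same triple $(0,\beta,n\alpha)$ to $x = e^{-tA}y$, invoke the factorization above, and bound $\|e^{-tA}y\|\leq M\|y\|$ to obtain
\[
\|A^\beta\, e^{-tA}\| \;\leq\; C\, M^{1-\beta/(n\alpha)}\, \bigl\|A^\alpha\, e^{-(t/n)A}\bigr\|^{\beta/\alpha}.
\]
Raising to the $\alpha$-th power yields the claimed upper bound with $c_2 = 1/n$. The lower bound is symmetric: pick $m \in \mathbb{N}$ with $m\beta > \alpha$, apply Lemma~\ref{lem:moment_ineq} with the triple $(0,\alpha,m\beta)$ to $e^{-mtA}y$, and exploit the analogous factorization $A^{m\beta}\, e^{-mtA} = (A^\beta\, e^{-tA})^m$ to conclude the lower bound with $c_1 = m$. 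I do not anticipate a serious obstacle; the only point requiring care is the bounded-operator factorization $A^{n\alpha}\, e^{-tA} = (A^\alpha\, e^{-(t/n)A})^n$, which rests on the commutativity of $A^\alpha$ with $e^{-sA}$ on $D(A^\alpha)$---a standard consequence of the sectorial functional calculus---together with the boundedness of each factor established at the outset.
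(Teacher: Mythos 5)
Your proposal is correct and follows essentially the same route as the paper: split $e^{-tA}$ into $n$ factors via the semigroup law, use the bounded-operator factorization of $A^{n\alpha}e^{-tA}$ (resp.\ $A^{\beta}e^{-tA}$), and apply the moment inequality of Lemma~\ref{lem:moment_ineq} once, with the reverse inequality obtained by interchanging the roles of $\alpha$ and $\beta$. The only cosmetic difference is that you interpolate with the triple $(0,\beta,n\alpha)$ where the paper uses $(0,\beta/n,\alpha)$ and then takes the $n$-th power of $A^{\beta/n}e^{-tA/n}$; the constants and conclusions are the same.
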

\begin{proof}
	Proof of a):
	Let $n \in \mathbb{N}$ and $\delta \in [0,\alpha)$ satisfy 
	$\beta = n \alpha + \delta$.
	Let $x \in X$ and $t>0$. We have
	\[
	e^{-tA} x = \big(e^{-t A/(n+1)}\big)^n e^{-t A/(n+1)}x.
	\]
	Since
	\[
	e^{-t A/(n+1)} x \in D(A^{\alpha}) \subset D(A^{\delta}),
	\]
	it follows that $e^{-tA}x \in D(A^{n\alpha + \delta}) = D(A^{\beta})$.
	
	Proof of b):
	Let $n \in \mathbb{N}$ satisfy $n\alpha \geq \beta$, and let
	$K \coloneqq \sup_{t \geq 0} \|e^{-tA}\|$.
	By the moment inequality, there exists $C>0$ such that
	for all $x \in X$,
	\begin{align*}
		\big\|A^{\beta /n} e^{-tA/n}x\big\| &\leq C 
		\big\|e^{-tA/n}x\big\|^{1-\beta/(n\alpha)} \big\|A^{\alpha}e^{-tA/n}x\big\|^{\beta/(n\alpha)} \\
		&\leq C K^{1-\beta/(n\alpha)} \big\|A^{\alpha}e^{-tA/n}\big\|^{\beta/(n\alpha)}  \|x\|.
	\end{align*} 
	This implies that 
	\[
	\big\|A^{\beta} e^{-tA}\big\| =
	\big\|\big(A^{\beta /n} e^{-tA/n} \big)^n\big\| \leq 
	\left(C K^{1-\beta/(n\alpha)} \big\|A^{\alpha}e^{-tA/n}\big\|^{\beta/(n\alpha)}\right)^n.
	\]
	Hence,
	\[
	\big\|A^{\beta} e^{-tA}\big\|^{\alpha} \leq \left(C K^{1-\beta/(n\alpha)} \big\|A^{\alpha}e^{-tA/n}\big\|^{\beta/(n\alpha)}\right)^{n\alpha} \leq 
	\left(C^{n\alpha} K^{n\alpha-\beta}\right) \big\|A^{\alpha}e^{-tA/n}\big\|^{\beta}.
	\]
	Interchanging $\alpha$ and $\beta$, we obtain the other inequality.
\end{proof}

\subsection{Crandall--Pazy class of semigroups}
The following result characterizes 
the Crandall--Pazy class of $C_0$-semigroups on Hilbert spaces.
\begin{theorem}
	\label{thm:decay_growth}
	Let $-A$ be the generator of a  $C_0$-semigroup $(e^{-tA})_{t \geq 0}$ 
	on a Hilbert space $H$, and let $\omega_A \in \mathbb{R}$ be the exponential 
	growth bound of $(e^{-tA})_{t \geq 0}$.
	Then the following statements are equivalent for a fixed $\beta \in (0,1]$:
	\begin{enumerate}
		\renewcommand{\labelenumi}{(\roman{enumi})}
		\item 
		$(e^{-tA})_{t \geq 0}$ is in the Crandall--Pazy class with parameter $\beta$.
		\item 
		$\sigma(A) \cap i \mathbb{R}$ is bounded 
		and $
		\|(i\eta + A)^{-1}\| = O(|\eta|^{-\beta})
		$
		as $|\eta| \to \infty$.
		\item
		For each $c > \omega_A$ and $q \in (0,1/2)$, there
		exists a constant $M>0$ such that for all $x \in H$,
		\begin{equation}
			\label{eq:resol_int_unbounded}
			\sup_{\xi>c} \xi^{1-2q}
			\int_{-\infty}^{\infty}
			\|(A+c)^{\beta q}(\xi + i \eta +A)^{-1}x \|^2 d\eta \leq M \|x\|^2.
		\end{equation}
		\item[(iii)']
		There
		exist constants $c > \omega_A$, $q \in (0,1/2)$, and $M>0$ 
		such that the estimate \eqref{eq:resol_int_unbounded}
		holds for all $x \in H$.
	\end{enumerate}
\end{theorem}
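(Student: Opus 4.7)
The plan is to prove the four-way equivalence by closing the cycle (i) $\Rightarrow$ (ii), (i) $\Rightarrow$ (iii), (iii) $\Rightarrow$ (iii)' (immediate), and (ii) $\Rightarrow$ (iii)' $\Rightarrow$ (i). The implication (i) $\Rightarrow$ (ii) is the classical Crandall--Pazy argument: starting from the Laplace representation $(\mu + A)^{-1} = \int_{0}^{\infty}e^{-\mu t}e^{-tA}\,dt$ for $\re\mu > \omega_A$, one splits the integral at some $t_0 > 0$, integrates by parts on $(0,t_0)$ to surface $Ae^{-tA}$, and uses the growth bound from (i) to extend the resulting identity analytically into a parabolic region $\{\re z \geq -a + b|\im z|^{\beta}\}$; the boundedness of $\sigma(A)\cap i\mathbb{R}$ and the resolvent decay along the imaginary axis then follow.

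Both (i) $\Rightarrow$ (iii) and (iii)' $\Rightarrow$ (i) rest on the Plancherel identity, which is available because $H$ is a Hilbert space. Applied to $f(t) = e^{-\xi t}(A+c)^{\beta q}e^{-tA}x \cdot \mathbf{1}_{(0,\infty)}(t)$ for $\xi > c > \omega_A$, it yields
\begin{equation*}
	\int_{-\infty}^{\infty}\bigl\|(A+c)^{\beta q}(\xi+i\eta+A)^{-1}x\bigr\|^{2}\,d\eta = 2\pi\int_{0}^{\infty}e^{-2\xi t}\bigl\|(A+c)^{\beta q}e^{-tA}x\bigr\|^{2}\,dt.
\end{equation*}
For (i) $\Rightarrow$ (iii), the shift identity $(A+c)e^{-t(A+c)} = e^{-ct}(A+c)e^{-tA}$ together with (i) gives $\|(A+c)e^{-t(A+c)}\| = O(t^{-1/\beta})$ as $t\to 0+$ and exponential decay at infinity; Lemma~\ref{lem:domain_decay}(b) applied to the bounded semigroup $(e^{-t(A+c)})_{t\geq 0}$ then yields $\|(A+c)^{\beta q}e^{-tA}\| = O(t^{-q})$ for small $t$, and a gamma-type integral produces the bound $M\|x\|^{2}\xi^{2q-1}$ uniformly in $\xi > c$. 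For (iii)' $\Rightarrow$ (i), the argument is reversed: set $\xi = 1/t$ in the quadratic estimate, restrict the time integral to $[t/2, t]$, use local boundedness of $e^{-sA}$ to extract a pointwise bound $\|(A+c)^{\beta q}e^{-tA}\| = O(t^{-q})$ as $t\to 0+$, and apply Lemma~\ref{lem:domain_decay}(b) in the other direction to recover $\|(A+c)e^{-t(A+c)}\| = O(t^{-1/\beta})$. Immediate differentiability follows from Lemma~\ref{lem:domain_decay}(a), since the bound $\|(A+c)^{\beta q}e^{-tA}\| < \infty$ places $e^{-tA}H$ in $D(A^{\beta q})$ for every $t>0$ and hence in $D(A)$.

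The substantive Hilbert-space content of the theorem is the implication (ii) $\Rightarrow$ (iii)', which by the previous paragraph yields the sharp improvement of Eberhardt's Banach-space bound $O(t^{-\beta+\varepsilon})$ to the optimal $O(t^{-1/\beta})$. I would adapt the technique developed in \cite{Borichev2010,Batty2016,Wakaiki2024JEE} for the polynomial-decay analogue Theorem~\ref{thm:polynomial_decay}. The idea is to transfer the resolvent decay from the imaginary axis to the line $\re\mu = \xi > c$ via the resolvent identity $(\xi+i\eta+A)^{-1} = (i\eta+A)^{-1}[I + \xi(i\eta+A)^{-1}]^{-1}$, apply the moment inequality (Lemma~\ref{lem:moment_ineq}) to interpolate
\begin{equation*}
	\bigl\|(A+c)^{\beta q}(\xi+i\eta+A)^{-1}x\bigr\| \leq C \bigl\|(\xi+i\eta+A)^{-1}x\bigr\|^{1-\beta q}\bigl\|(A+c)(\xi+i\eta+A)^{-1}x\bigr\|^{\beta q},
\end{equation*}
control the first factor by Plancherel ($\int\|(\xi+i\eta+A)^{-1}x\|^{2}d\eta \leq C\|x\|^{2}/\xi$), bound the second via $(A+c)(\xi+i\eta+A)^{-1} = I - (\xi+i\eta-c)(\xi+i\eta+A)^{-1}$ together with the transferred resolvent estimate, and combine by H\"older with exponents $1/(1-\beta q)$ and $1/(\beta q)$, splitting the $\eta$-integral at the threshold $|\eta|\asymp\xi^{1/\beta}$ where the resolvent estimate from (ii) becomes effective. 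The main obstacle is tracking the exponents through the interpolation and keeping the shift between $A$ and $A+c$ from degrading the sharp rate $1/\beta$; this bookkeeping is precisely what produces the scaling $\xi^{1-2q}$ in (iii)'.
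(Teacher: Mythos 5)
Your overall architecture (a strongly connected set of implications with Plancherel powering the passage between (i) and (iii)/(iii)$'$) is sound, and your treatments of (i)~$\Rightarrow$~(ii), (i)~$\Rightarrow$~(iii) and (iii)$'$~$\Rightarrow$~(i) match the paper's Lemma~\ref{lem:decay2resol_int} in substance (the paper extracts the pointwise bound from the integral condition via the inversion formula and a duality pairing with $(\xi-i\eta+A^*)^{-1}y$ on $x\in D(A^{2+\beta q})$ rather than by restricting the time integral to $[t/2,t]$, which sidesteps the a priori domain question $e^{-tA}x\in D((A+c)^{\beta q})$ that your version would need to address by a density-plus-closedness argument).

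The genuine gap is in your proof of (ii)~$\Rightarrow$~(iii)$'$. Your H\"older step with exponents $1/(1-\beta q)$ and $1/(\beta q)$ requires the finiteness of
\begin{equation*}
\int_{-\infty}^{\infty}\bigl\|(A+c)(\xi+i\eta+A)^{-1}x\bigr\|^{2}\,d\eta ,
\end{equation*}
and this integral diverges for general $x\in H$: already for a self-adjoint $A$ with unbounded spectrum it equals $\pi\int_{\sigma(A)}(a+c)^{2}(\xi+a)^{-1}\,d\|E_{a}x\|^{2}$, which is finite only for $x$ in (roughly) $D(A^{1/2})$. Splitting at $|\eta|\asymp\xi^{1/\beta}$ does not repair this, because the divergence sits at $|\eta|\to\infty$, exactly where you want to invoke (ii); and replacing the vector bound by the operator-norm interpolation $\|(A+c)^{\beta q}(\xi+i\eta+A)^{-1}\|\lesssim\|(\xi+i\eta+A)^{-1}\|^{1-\beta q}\|(A+c)(\xi+i\eta+A)^{-1}\|^{\beta q}\lesssim|\eta|^{-\beta(1-q)}$ gives a square that is non-integrable whenever $\beta(1-q)\le 1/2$, so the tail cannot be closed for small $\beta$. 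The paper avoids the frequency-side integral entirely: from (ii) it uses the moment inequality (Lemma~\ref{lem:moment_ineq}) with the exactly cancelling exponents $1-\beta$ and $\beta$ to obtain the \emph{uniform} bound $\sup_{\lambda\in\mathbb{C}_{+}}\|B^{\beta}(\lambda+B)^{-1}\|<\infty$ for the shifted generator $B=A+c+1$, then invokes \cite[Theorem~4.7]{Batty2016} (where the Plancherel theorem enters) to conclude $\|B^{\beta}e^{-tB}\|=O(1/t)$, and finally Lemma~\ref{lem:domain_decay} to land in the Crandall--Pazy class; (iii) then follows from (i) by the time-side Plancherel computation. To complete your argument you would need either this detour through the half-plane bound or a genuinely different frequency-side estimate; the interpolation-plus-H\"older scheme as written does not close.
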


The proof of 
the equivalence of (i) and (iii) in Theorem~\ref{thm:decay_growth}
is based on the following lemma.
\begin{lemma}
	\label{lem:decay2resol_int}
	Let $-A$ be the generator of a bounded $C_0$-semigroup $(e^{-tA})_{t \geq 0}$ 
	on a Hilbert space $H$. Then
	the following statements hold for a fixed $q \in (0,1/2)$:
	\begin{enumerate}
		\renewcommand{\labelenumi}{\alph{enumi})}
		\item
		If 
		$(e^{-tA})_{t \geq 0}$ is in the Crandall--Pazy class with parameter $\beta \in (0,1]$,
		then for each $\xi_0>0$,
		there exists a constant $M>0$ such that for all $x \in H$,
		\begin{equation}
			\label{eq:resol_int_small}
			\sup_{0 < \xi \leq \xi_0} \xi
			\int_{-\infty}^{\infty}
			\|A^{\beta q}(\xi + i \eta +A)^{-1}x \|^2 d\eta \leq M \|x\|^2
		\end{equation}
		and
		\begin{equation}
			\label{eq:resol_int_large}
			\sup_{\xi > \xi_0} \xi^{1-2q}
			\int_{-\infty}^{\infty}
			\|A^{\beta q}(\xi + i \eta +A)^{-1}x \|^2 d\eta \leq M \|x\|^2.
		\end{equation}
		\item
		If there exist constants 
		$\beta \in (0,1]$ and $\xi_0, M>0$ such that 
		the estimate \eqref{eq:resol_int_large} holds
		for all $x \in H$, then $(e^{-tA})_{t \geq 0}$ is in the Crandall--Pazy class with parameter $\beta$.
	\end{enumerate}
\end{lemma}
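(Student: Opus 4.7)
The proof of both parts goes through the Plancherel identity
\begin{equation*}
	\int_{-\infty}^{\infty} \big\|A^{\beta q}(\xi + i\eta + A)^{-1} x\big\|^2 d\eta = 2\pi \int_0^{\infty} e^{-2\xi t} \big\|A^{\beta q} e^{-tA} x\big\|^2 dt,
\end{equation*}
which holds for any $\xi > 0$ because $(\xi+i\eta+A)^{-1}x$ is the Fourier transform in $\eta$ of $\mathbf{1}_{t>0}e^{-\xi t}e^{-tA}x$ and $A^{\beta q}$ commutes with the resolvent. I would verify this identity first on the dense subspace $D(A^{\beta q})$---where $A^{\beta q}e^{-tA}x = e^{-tA}A^{\beta q}x$ requires no comment---and extend it to general $x \in H$ via the closedness of $A^{\beta q}$.

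For part (a), the Crandall--Pazy bound $\|Ae^{-tA}\| = O(t^{-1/\beta})$ combined with Lemma~\ref{lem:domain_decay}b (exponents $1$ and $\beta q$) yields $\|A^{\beta q} e^{-tA}\| \leq C t^{-q}$ for $0 < t \leq t_0$, while boundedness of the semigroup gives $\|A^{\beta q} e^{-tA}\| \leq C'$ for $t \geq t_0$. Splitting the time integral at $t_0$ and using the substitution $u = 2\xi t$ shows that $\int_0^{t_0} e^{-2\xi t} t^{-2q} dt$ is bounded by $\Gamma(1-2q)(2\xi)^{2q-1}$ (useful for large $\xi$) and by $t_0^{1-2q}/(1-2q)$ (useful for small $\xi$, since $q < 1/2$), while $\int_{t_0}^\infty e^{-2\xi t} dt \leq 1/(2\xi)$. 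Multiplication by $\xi^{1-2q}$ for $\xi > \xi_0$ then cancels the $(2\xi)^{2q-1}$ factor and makes the tail negligible, giving \eqref{eq:resol_int_large}; multiplication by $\xi$ for $0 < \xi \leq \xi_0$ keeps the first piece bounded and cancels the $1/\xi$ in the tail, giving \eqref{eq:resol_int_small}.

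For part (b), Plancherel translates the hypothesis into $\xi^{1-2q} \int_0^{\infty} e^{-2\xi t} \|A^{\beta q} e^{-tA} x\|^2 dt \leq M'\|x\|^2$ for $\xi > \xi_0$. To extract a pointwise bound, I would use the semigroup identity $A^{\beta q} e^{-tA} x = e^{-(t-s)A}A^{\beta q}e^{-sA}x$ for $0 < s \leq t$: together with boundedness of $(e^{-tA})_{t \geq 0}$ this gives $\|A^{\beta q} e^{-sA} x\| \geq K^{-1}\|A^{\beta q}e^{-tA}x\|$ for $s \in [t/2, t]$. Restricting the time integral to this interval and choosing $\xi = 1/t$ (for $t < 1/\xi_0$) produces $\int_{t/2}^t e^{-2\xi s} ds \geq c t$ and so $\|A^{\beta q} e^{-tA}\| \leq C t^{-q}$. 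Applying Lemma~\ref{lem:domain_decay}b once more, now with exponents $\beta q$ and $1$, gives $\|Ae^{-tA}\|^{\beta q} \leq C_2\|A^{\beta q}e^{-c_2 tA}\|$, hence $\|Ae^{-tA}\| = O(t^{-1/\beta})$ as $t \to 0+$; immediate differentiability follows from the finiteness of $\|Ae^{-tA}\|$ for all $t > 0$.

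The main technical hurdle I anticipate is justifying the Plancherel identity for arbitrary $x \in H$ in part (b), where the semigroup regularity is the target of the proof rather than an input. I would handle this by noting that, since the resolvent integral is finite by hypothesis, the $L^2$ inverse Fourier transform of $\eta \mapsto A^{\beta q}(\xi+i\eta+A)^{-1}x$ exists in $L^2(\mathbb{R};H)$; closedness of $A^{\beta q}$ then identifies this inverse transform with $\mathbf{1}_{t>0}e^{-\xi t}A^{\beta q}e^{-tA}x$ almost everywhere, so that $e^{-tA}x \in D(A^{\beta q})$ for a.e.\ $t > 0$, and the semigroup property upgrades this to all $t > 0$.
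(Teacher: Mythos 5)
Your part a) is essentially the paper's proof: the same Plancherel identity, the same split of the time integral at $t_0$, and the same gamma-function bound; the only cosmetic difference is your second bound $t_0^{1-2q}/(1-2q)$ for small $\xi$, where the paper just reuses the $\Gamma(1-2q)\xi^{2q-1}$ bound. Your part b), however, takes a genuinely different route. The paper stays on the resolvent side: it applies the Laplace inversion formula to $\langle e^{-tA}A^{\beta q}x,y\rangle$ for $x\in D(A^{2+\beta q})$, splits $(\xi+i\eta+A)^{-2}$ by Cauchy--Schwarz into the hypothesis \eqref{eq:resol_int_large} and a second Plancherel estimate for the adjoint resolvent (see \eqref{eq:resol_int_bound_bounded}), and then sets $\xi=1/t$ before invoking the closed graph theorem. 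You instead transfer the hypothesis to the time domain by a single forward application of Plancherel and extract the pointwise bound $\|A^{\beta q}e^{-tA}x\|\le Ct^{-q}\|x\|$ from the integral bound via the elementary monotonicity $\|A^{\beta q}e^{-tA}x\|\le K\|A^{\beta q}e^{-sA}x\|$ for $s\in[t/2,t]$ and the choice $\xi=1/t$; this is correct and arguably more elementary, since it avoids the inversion formula and the adjoint semigroup entirely. The one technical point you rightly flag --- justifying the Plancherel identity before the regularity $e^{-tA}H\subset D(A^{\beta q})$ is known --- is handled more simply than your a.e.-identification argument suggests: run the whole computation for $x$ in the dense subspace $D(A^{\beta q})$, where $A^{\beta q}e^{-tA}x=e^{-tA}A^{\beta q}x$ makes the identity classical, obtain $\|A^{\beta q}e^{-tA}x\|\le Ct^{-q}\|x\|$ there, and then extend to all of $H$ by density and closedness of $A^{\beta q}$ (this is also in effect what the paper does, with $D(A^{2+\beta q})$ in place of $D(A^{\beta q})$). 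The concluding step via Lemma~\ref{lem:domain_decay}.b) with exponents $\beta q$ and $1$ is exactly as in the paper.
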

\begin{proof}
	Let $q \in (0,1/2)$.
	By Lemma~\ref{lem:domain_decay}, the bounded $C_0$-semigroup
	$(e^{-tA})_{t \geq 0}$ is in the Crandall--Pazy class with parameter $\beta \in (0,1]$ if and only if $e^{-tA}H \subset D(A^{\beta q})$ for all $t>0$ and
	there exist $M_0,t_0 >0$ such that 
	for all $t \in (0, t_0]$,
	\begin{equation}
		\label{eq:AT_bound1}
		\big\|A^{\beta q}e^{-tA}\big\| \leq \frac{M_0}{t^{q}}.
	\end{equation}
	
	Proof of a).
	Let $K \coloneqq \sup_{t\geq 0} \|e^{-tA}\|$.
	Using \eqref{eq:AT_bound1}, we have that 
	for all $t>t_0$, 
	\begin{equation}
		\label{eq:AT_bound2}
		\big\|A^{\beta q}e^{-tA}\big\| \leq \big\|A^{\beta q}e^{-t_0A}\big\| \, \big\|e^{-(t-t_0)A}\big\| \leq 
		\frac{M_0 K}{t_0^q}.
	\end{equation}
	Let $x \in H$.
	For $\xi>0$, we define $f_{\xi}\colon \mathbb{R} \to H$ by
	\[
	f_{\xi}(t) \coloneqq 
	\begin{cases}
		e^{-\xi t}A^{\beta q}e^{-tA}x, & t > 0,\\
		0,& t \leq 0.
	\end{cases}
	\]
	From the estimates \eqref{eq:AT_bound1} and \eqref{eq:AT_bound2},
	we see that $f_{\xi} \in L^1(\mathbb{R},H) \cap L^2(\mathbb{R},H)$ for all $\xi>0$.
	For each $\xi >0$,
	the Fourier transform $\mathcal{F}[f_{\xi}]$ of $f_{\xi}$ is given by
	\[
	\mathcal{F}[f_{\xi}](\eta) =
	\int_0^{\infty} e^{-(\xi+i\eta)t}A^{\beta q }e^{-tA}x dt =
	A^{\beta q}(\xi+i\eta+A)^{-1}x,\quad \eta \in \mathbb{R}.
	\]
	
	By the Plancherel theorem,
	\begin{equation}
		\label{eq:Abq_Plancherel}
		\int_{-\infty}^{\infty}
		\|A^{\beta q}(\xi + i \eta +A)^{-1}x \|^2 d\eta = 2\pi
		\int_0^{\infty} e^{-2\xi t} \big\|A^{\beta q} e^{-tA}x\big\|^2 dt
	\end{equation}
	for all $\xi >0$.
	Using the estimates \eqref{eq:AT_bound1} and \eqref{eq:AT_bound2}, 
	we obtain
	\begin{align*}
		\int_0^{\infty} e^{-2\xi t} \big\|A^{\beta q} e^{-tA}x\big\|^2 dt 
		&=
		\int_0^{t_0} e^{-2\xi t} \big\|A^{\beta q} e^{-tA}x\big\|^2 dt +
		\int_{t_0}^{\infty} e^{-2\xi t} \big\|A^{\beta q} e^{-tA} x\big\|^2 dt \\
		&\leq
		M_0^2 \|x\|^2 \int_0^{t_0}  \frac{e^{-2\xi t} }{t^{2q }}dt +
		\frac{M_0^2K^2}{t_0^{2q}} \|x\|^2 \int_{t_0}^{\infty}  e^{-2\xi t} dt.
	\end{align*}
	By the definition of the gamma function $\Gamma$,
	\[
	\int_0^{t_0}  \frac{e^{-2\xi t} }{t^{2q }}dt \leq 
	\int_0^{\infty}  \frac{e^{-2\xi t} }{t^{2q }}dt 
	=\frac{\Gamma(1-2q)}{(2\xi)^{1-2q}},
	\]
	and hence
	\begin{align*}
		\int_0^{\infty} e^{-2\xi t} \big\|A^{\beta q} e^{-tA}x\big\|^2 dt 
		&\leq
		\frac{M_0^2}{2}
		\left(
		\frac{2^{2q}\Gamma(1-2q)}{\xi^{1-2q}} + 
		\frac{K^2}{t_0^{2q}\xi}  \right)
		\|x\|^2 
	\end{align*}
	for all $\xi >0$. 
	Let $\xi_0>0$ be given. Then
	\[
	\xi\int_{-\infty}^{\infty}
	\|A^{\beta q}(\xi + i \eta +A)^{-1}x \|^2 d\eta \leq 
	\pi M_0^2
	\left(
	(2\xi_0)^{2q}\Gamma(1-2q)+ 
	\frac{K^2}{t_0^{2q}}  \right) \|x\|^2 
	\]
	for all $\xi \in (0,\xi_0]$,	
	and
	\[
	\xi^{1-2q}\int_{-\infty}^{\infty}
	\|A^{\beta q}(\xi + i \eta +A)^{-1}x \|^2 d\eta \leq 
	\pi M_0^2
	\left(2^{2q} 
	\Gamma(1-2q)+ 
	\frac{K^2}{(t_0\xi_0)^{2q}}  \right) \|x\|^2 
	\]
	for all $\xi >\xi_0$.	
	
	Proof of b).
	By assumption,
	there exist $\beta \in (0,1]$ and $\xi_0, M>0$ such that 
	\begin{equation}
		\label{eq:resol_int_bound_CP}
		\int_{-\infty}^{\infty}
		\|A^{\beta q}
		(\xi+i \eta +A)^{-1}x \|^2 d\eta \leq \frac{M}{\xi^{1-2q}} \|x\|^2
	\end{equation}
	for all $x \in H$ and $\xi > \xi_0$. 
	The Plancherel theorem shows that 
	\begin{equation}
		\label{eq:resol_int_bound_bounded}
		\int_{-\infty}^{\infty}
		\|
		(\xi-i \eta +A^*)^{-1}y \|^2 d\eta = 
		2\pi 
		\int^{\infty}_0 e^{-2\xi t} \big \| e^{-tA^*} y  \big\|^2 dt
		\leq \frac{\pi K^2}{\xi} \|y\|^2
	\end{equation}
	for all $y \in H$ and $\xi > 0$, where $K \coloneqq \sup_{t\geq 0} \|e^{-tA}\|$.
	
	Let $x \in D(A^{2+\beta q})$ and $y \in H$.
	The inversion formula (see, e.g., \cite[Corollary~III.5.16]{Engel2000}
	and \cite[Corollary~8.14]{Batty2021JFA}) shows that for all
	$t,\xi >0$,
	\[
	\big|\big\langle
	e^{-tA} A^{\beta q}x,y
	\big\rangle\big|
	\leq 
	\frac{e^{\xi t}}{2\pi t}
	\int_{-\infty}^{\infty}
	|
	\langle
	(\xi+i \eta +A)^{-2}A^{\beta q}x,y
	\rangle| d\eta.
	\]
	By the Cauchy-Schwartz inequality,
	\begin{align*}
		&\big|\big\langle
		A^{\beta q}e^{-tA} x,y
		\big\rangle\big| \leq 
		\frac{e^{\xi t}}{2\pi t}
		\left(
		\int_{-\infty}^{\infty}
		\|A^{\beta q}
		(\xi+i \eta +A)^{-1}x \|^2 d\eta \right)^{1/2}
		\left(
		\int_{-\infty}^{\infty}
		\|(\xi-i \eta +A^*)^{-1}y \|^2 d\eta\right)^{1/2}
	\end{align*}
	for all
	$t,\xi >0$.
	Combining this with the estimates \eqref{eq:resol_int_bound_CP}
	and \eqref{eq:resol_int_bound_bounded}, we obtain
	\begin{equation}
		\label{eq:inner_product_bound}
		\big|\big\langle
		A^{\beta q}e^{-tA} x,y
		\big\rangle\big|
		\leq 
		\frac{M_1e^{\xi t}}{\xi^{1-q} \, t}\|x\|\, \|y\|,\quad 
		\text{where~}M_1 \coloneqq 
		\frac{K}{2}\sqrt{\frac{M}{\pi}}
	\end{equation}
	for all $t>0$ and $\xi > \xi_0$.
	
	Let $t\in (0,1/\xi_0)$, and set $\xi \coloneqq 1/t > \xi_0$. From the 
	estimate \eqref{eq:inner_product_bound}, we have that
	\begin{equation}
		\label{eq:AbpT_bound}
		\big\|A^{\beta q}e^{-tA} x\big\| \leq \frac{eM_1}{ t^q} \|x\|
	\end{equation}
	for all $x \in D(A^{2+\beta q})$.
	The estimate 
	\eqref{eq:AbpT_bound} and the closed graph theorem
	show that 
	$A^{\beta q}e^{-tA} \in \mathcal{L}(H)$. Moreover,
	\[
	\big\|A^{\beta q}e^{-tA}\big\| \leq \frac{eM_1}{t^q}.
	\]
	Thus, $(e^{-tA})_{t \geq 0}$ is in the Crandall--Pazy class with parameter $\beta$.
\end{proof}

We are now ready to prove Theorem~\ref{thm:decay_growth}.
\begin{proof}[Proof of Theorem~\ref{thm:decay_growth}.]
	Proof of (i) $\Leftrightarrow$ (ii).
	In \cite[Theorem~2.1]{Crandall1969},  the implication (i) $\Rightarrow $ (ii) was proved.
	To prove  the implication (ii) $\Rightarrow $ (i), we first see that
	there exist $K\geq 1$ and $c \in \mathbb{R}$ such that 
	$\|e^{-tA}\| \leq Ke^{ct}$ for all $t \geq 0$. 
	Define $B \coloneqq A+c+1$.
	Then $-B$ generates an exponentially stable $C_0$-semigroup $(e^{-tB})_{t \geq 0}$
	on $H$. It suffices to show that $(e^{-tB})_{t \geq 0}$ is in the Crandall--Pazy class
	with parameter $\beta$.
	
	We will prove that 
	\begin{equation}
		\label{eq:Bbeta_resol_bound}
		\sup_{\lambda \in \mathbb{C}_+}\|B^{\beta}(\lambda +B)^{-1} \| < \infty.
	\end{equation}
	If the estimate \eqref{eq:Bbeta_resol_bound} is true, then
	the argument in the proof of \cite[Theorem~4.7]{Batty2016} (or
	its variant, \cite[Proposition~3.5]{Wakaiki2024JMAA}, directly) shows that 
	$e^{-tB}H \subset D(B^{\beta})$ for all $t>0$, and moreover that
	there exists $M >0$ such that 
	\[
	\big\|B^{\beta} e^{-tB}\big\| \leq \frac{M}{t}
	\]
	for all $t >0$. Therefore, Lemma~\ref{lem:domain_decay}
	shows that $(e^{-tB})_{t \geq 0}$ is in the Crandall--Pazy class
	with parameter $\beta$.
	
	By assumption,
	there exist $M_1,\delta >0$ 
	such that for all $\eta \in \mathbb{R}$
	satisfying $|\eta | \geq  \delta $, 
	\[
	i \eta \in \varrho(-A)\quad \text{and} \quad 
	\|(i \eta + A)^{-1} \| \leq \frac{M_1}{|\eta|^{\beta}}.
	\]
	The Hille-Yosida theorem shows that 
	\begin{equation}
		\label{eq:HY_estimate_cA}
		\|(\lambda + c+ A)^{-1}\| \leq \frac{K}{\re \lambda}
	\end{equation}
	for all $\lambda \in \mathbb{C}_+$.
	The resolvent equation
	\[
	(i \eta + B)^{-1} =
	(i \eta + A)^{-1}  - (c+1)  (i \eta + A)^{-1} 
	\big( (1 + i \eta)+ c + A \big)^{-1} 
	\]
	yields
	\[
	\|(i \eta + B)^{-1} \| 
	\leq (1+(c+1)K) \|(i \eta + A)^{-1} \|
	\leq \frac{ (1+(c+1)K)M_1}{|\eta|^{\beta}}
	\]
	for all $\eta \in \mathbb{R}$ satisfying $|\eta | \geq \delta $.
	Since $(e^{-tB})_{t \geq 0}$ is exponentially stable, we have
	\[
	\sup_{-\delta < \eta < \delta } \|(i \eta + B)^{-1} \| < \infty.
	\]
	Therefore,
	there exists $M_2 >0$ such that 
	\begin{equation}
		\label{eq:B_resol_ia}
		\|(i \eta + B)^{-1} \| \leq \frac{M_2}{(|\eta|+ 1)^\beta }
	\end{equation}
	for all $\eta \in \mathbb{R}$.
	As shown in \cite[Theorem~2, $(\mathrm{f}_{\alpha}) \Rightarrow
	(\mathrm{h}_{\alpha})
	$]{Eberhardt1994}, 
	combining \eqref{eq:B_resol_ia} with the estimate
	\[
	\|B(i \eta + B)^{-1}\| \leq 1 + |\eta| \, \| (i \eta + B)^{-1}\|,\quad \eta \in \mathbb{R}
	\]
	and the moment inequality given in Lemma~\ref{lem:moment_ineq}, we derive
	\begin{equation}
		\label{eq:B_resol_ia_bounded}
		\|B^{\beta} (i \eta +B)^{-1}\| \leq M_3
	\end{equation}
	for all $\eta \in \mathbb{R}$ and some $M_3 >0$.
	
	The estimate \eqref{eq:HY_estimate_cA} gives
	\begin{equation}
		\label{eq:resol_rh}
		\| (\xi + i \eta + B)^{-1}\| \leq \frac{K}{\xi+1}
	\end{equation}
	for all $\xi >0$ and $\eta \in \mathbb{R}$. 
	From \eqref{eq:B_resol_ia_bounded}, \eqref{eq:resol_rh}, and
	the resolvent equation, it follows that 
	\begin{align*}
		\|B^{\beta}(\xi + i\eta +B)^{-1} \| &\leq 
		\|
		B^{\beta}(i \eta+B)^{-1}
		\| + 
		\xi \| B^{\beta}(i\eta + B)^{-1}\| \, \| 
		(\xi + i\eta +B)^{-1}
		\| \\
		&\leq M_3(1+K)
	\end{align*}
	for all $\xi >0$ and $\eta \in \mathbb{R}$.
	Thus, the desired estimate \eqref{eq:Bbeta_resol_bound} holds.
	
	Proof of (i) $\Leftrightarrow$ (iii) $\Leftrightarrow$ (iii)'.
	Since the implication  (iii) $\Rightarrow$ (iii)' is obvious, 
	we will prove the implications  (i) $\Rightarrow$ (iii) and
	(iii)' $\Rightarrow$ (i).
	
	For the proof of  the implication (i) $\Rightarrow$ (iii),
	let $c >  \omega_A$ and $q \in (0,1/2)$.
	Take $c_1 \in (\omega_A,c)$, and define
	$B \coloneqq A+c_1$.
	Then $-B$ generates an exponentially stable 
	$C_0$-semigroup on $H$.
	By Lemma~\ref{lem:decay2resol_int}.a), there exists $M_1 >0$ such that
	\[
	\sup_{\xi > c-c_1} \xi^{1-2q}
	\int_{-\infty}^{\infty}
	\|B^{\beta q}(\xi + i \eta +B)^{-1}x \|^2 d\eta \leq M_1 \|x\|^2
	\]
	for all $x \in H$. Setting $\zeta \coloneqq \xi+c_1$, we obtain
	\begin{equation}
		\label{eq:zeta_c1_int_bound}
		\sup_{\zeta > c} \,(\zeta - c_1)^{1-2q}
		\int_{-\infty}^{\infty}
		\|(A+c_1)^{\beta q}(\zeta + i \eta +A)^{-1}x \|^2 d\eta \leq M_1 \|x\|^2
	\end{equation}
	for all $x \in H$.
	Since
	\[
	(A+c)^{\beta q} (A+c_1)^{-\beta q},\,
	(A+c_1)^{\beta q} (A+c)^{-\beta q} \in \mathcal{L}(H),
	\]
	there exist $C_1,C_2 >0$ such that 
	\begin{equation}
		\label{eq:A_c_ineq}
		C_1\|(A+c)^{\beta q}x \|^2 \leq  \|(A+c_1)^{\beta q}x \|^2 \leq 
		C_2\|(A+c)^{\beta q}x \|^2
	\end{equation}
	for all $x \in D(A)$. 
	Moreover, 
	\begin{equation}
		\label{eq:zeta_c_ineq}
		\left(
		\frac{c-c_1}{c}
		\right)^{1-2q} 
		\zeta^{1-2q} \leq  (\zeta - c_1)^{1-2q} \leq \zeta^{1-2q}
	\end{equation}
	for all $\zeta > c$.
	Combining 
	the inequalities~\eqref{eq:zeta_c1_int_bound}--\eqref{eq:zeta_c_ineq},
	we obtain
	\[
	\sup_{\zeta > c} \zeta^{1-2q}
	\int_{-\infty}^{\infty}
	\|(A+c)^{\beta q}(\zeta + i \eta +A)^{-1}x \|^2 d\eta \leq \frac{M_1}{C_1}\left(
	\frac{c}{c-c_1}
	\right)^{1-2q} 
	\|x\|^2
	\]
	for all $x \in H$. Hence, statement~(iii) holds.
	
	Finally, we prove the implication  (iii)' $\Rightarrow$ (i).
	Suppose that there
	exist $c > \omega_A$, $q \in (0,1/2)$, and $M>0$ 
	such that the estimate \eqref{eq:resol_int_unbounded}
	holds for all $x \in H$.
	Let $c_1 \in (\omega_A,c)$.
	Combining \eqref{eq:resol_int_unbounded} with
	\eqref{eq:A_c_ineq} and  \eqref{eq:zeta_c_ineq},
	we see that 
	\[
	\sup_{\zeta > c} \, (\zeta - c_1)^{1-2q}
	\int_{-\infty}^{\infty}
	\|(A+c_1)^{\beta q}(\zeta + i \eta +A)^{-1}x \|^2 d\eta \leq C_2M \|x\|^2
	\]
	for all $x \in H$. Set $\xi \coloneqq \zeta- c_1$ 
	and $B \coloneqq A+c_1$. Then
	\[
	\sup_{\xi > c-c_1} \xi^{1-2q}
	\int_{-\infty}^{\infty}
	\|B^{\beta q}(\xi + i \eta +B)^{-1}x \|^2 d\eta \leq C_2M \|x\|^2
	\]
	for all $x \in H$. This and Lemma~\ref{lem:decay2resol_int}.b) show 
	that $-B$ generates a $C_0$-semigroup
	in the Crandell--Pazy class. Thus, statement~(i) holds.
\end{proof}

We conclude this section by making two remarks.
\begin{remark}
		In the proof of Theorem~\ref{thm:decay_growth},
		the Plancherel theorem plays an important role,
		which is the reason why we consider Hilbert space semigroups.
		The equivalence of (i) and (iii) follows 
		from Lemma~\ref{lem:decay2resol_int}, where the Plancherel theorem is applied twice; see
		\eqref{eq:Abq_Plancherel} and 
		\eqref{eq:resol_int_bound_bounded}.
		Moreover, the Plancherel theorem
		is implicitly used in the proof of the implication 
		(ii) $\Rightarrow $ (i), since
		the argument in the proof of
		\cite[Theorem~4.7]{Batty2016}
		relies on the Plancherel theorem.
\end{remark}
\begin{remark}
		The above proof of the implication  (ii) $\Rightarrow$ (i) is based on \cite[Theorem~4.7]{Batty2016}. An alternative approach
		is as follows. 
		Let $-B$ be the generator of an
		exponentially stable $C_0$-semigroup $(e^{-tB})_{t \geq 0}$ on a Hilbert space, and let $\beta \in (0,1]$. Define
		the operator matrix
		\[
		\mathbf{B} \coloneqq
		\begin{pmatrix}
			B & B^{\beta} \\
			0 & B
		\end{pmatrix}
		\]
		with domain $D(B) \times D(B)$.
		By \cite[Theorem~2, $(\mathrm{a}_{\alpha}) \Leftrightarrow
		(\mathrm{d}_{\alpha})
		$]{Eberhardt1994}, 
		the $C_0$-semigroup 
		$(e^{-tB})_{t \geq 0}$ is in the Crandall--Pazy class with parameter $\beta$
		if and only if $-\mathbf{B}$
		generates a $C_0$-semigroup on $H \times H$.
		From an argument  used in the proof of \cite[Theorem~2.4]{Borichev2010}, 
		we see  that 
		if the estimate \eqref{eq:Bbeta_resol_bound} holds,
		the Plancherel theorem yields
		\[
		\sup_{\xi >0} \xi \int_{-\infty}^{\infty}
		\left(
		\|(\xi+i\eta+ \mathbf{B})^{-1} \mathbf{x}\|^2 + 
		\|(\xi+i\eta+ \mathbf{B}^*)^{-1} \mathbf{x}\|^2
		\right) d\eta < \infty
		\]
		for all $\mathbf{x} \in H \times H$. 
		This implies that 
		$-\mathbf{B}$ generates a $C_0$-semigroup on $H \times H$; see
		\cite[Theorem~2]{Gomilko1999} or \cite[Theorem~1]{Shi2000}.
\end{remark}

\section{Decay estimates via the $\mathcal{B}$-calculus}
\label{sec:Decay_estimate1}
In this section, we first 
present background material on the $\mathcal{B}$-calculus.
Next, we provide a new estimate for operator norms 
within the framework of the $\mathcal{B}$-calculus, using the 
integral estimate for the resolvent of the generator 
of a $C_0$-semigroup in the Crandall--Pazy class.
Then we estimate decay rates for the Crank--Nicolson scheme
with smooth initial data when
the  $C_0$-semigroup
$(e^{-tA})_{t \geq 0}$ is exponentially stable and is
in the Crandall--Pazy class.
We also give an estimate for the rate of decay of $\|e^{-tA^{-1}}A^{-\alpha}\|$
with $\alpha >0$ by the same approach.
Finally, we give a simple example illustrating that 
the obtained decay estimates cannot in general be improved
except for omitting logarithmic terms.

\subsection{Background material on the $\mathcal{B}$-calculus}
\label{sec:preliminaries_B_calc}
We recall some basic facts on the $\mathcal{B}$-calculus and refer to \cite{Batty2021,Batty2021JFA,Gomilko2024} for further information.
Let $\mathcal{B}$ be the space of 
holomorphic functions $f$ on $\mathbb{C}_+$ such that 
\[
\|f\|_{\mathcal{B}_{0}} \coloneqq
\int_0^{\infty}
\sup_{\eta \in \mathbb{R}} |f'(\xi+i\eta)| d\xi < \infty.
\]
If $f \in \mathcal{B}$, then 
\[
f(\infty) \coloneqq 
\lim_{\re z \to \infty} f(z)
\] 
exists in $\mathbb{C}$.

Let $\textrm{M}(\mathbb{R}_+)$ be the Banach algebra of 
bounded Borel measures on $\mathbb{R}_+$ with 
multiplication given by convolution and with norm given by
the total variation norm
$\|\cdot\|_{\textrm{M}(\mathbb{R}_+)}$. 
We identify $L^1(\mathbb{R}_+)$ with a subalgebra of $\textrm{M}(\mathbb{R}_+)$.
Let 
$\mathcal{LM}$ be the Banach algebra of Laplace transforms
$\hat \mu$ of $\mu \in M(\mathbb{R}_+)$, equipped with the norm $
\|\hat \mu\|_{\mathrm{HP}} \coloneqq \|\mu\|_{\textrm{M}(\mathbb{R}_+)}$.
We call $\mathcal{LM}$ the {\em Hille-Phillips algebra}.
Let $H^{\infty}(\mathbb{C}_+)$ be the Banach algebra
of bounded holomorphic functions on $\mathbb{C}_+$,
equipped with the supremum norm
\[
\|f\|_{\infty} \coloneqq 
\sup_{z \in \mathbb{C}_+} |f(z)|.
\]

We have that $\mathcal{LM} \subset
\mathcal{B} \subset H^{\infty}(\mathbb{C}_+)$.
Moreover, 
\begin{equation}
	\label{eq:inf_norm_bound}
	\|f\|_{\infty} \leq |f(\infty)| + \|f\|_{\mathcal{B}_0},\quad 
	f \in \mathcal{B},
\end{equation}
and
\[
\|f\|_{\infty} + \|f\|_{\mathcal{B}_0} \leq 2 \|f\|_{\textrm{HP}},\quad
f \in \mathcal{LM}.
\]
The space $\mathcal{B}$ equipped with the norm
\[
\|f\|_{\mathcal{B}} \coloneqq \|f\|_{\infty} + \|f\|_{\mathcal{B}_0},
\quad f \in  \mathcal{B},
\]
is a Banach algebra. We define 
$\mathcal{B}_0 \coloneqq \{
f \in \mathcal{B}: f(\infty) = 0
\}$.

\begin{example}
	\label{ex:r_alpha_Ds}
		Let $\alpha,c >0$, and define
		\[
		w_{\alpha,c} (z) \coloneqq \frac{1}{(z+c)^{\alpha}},\quad 
		z \in \mathbb{C}_+.
		\]
		Then $w_{\alpha,c}$
		is the Laplace transform of the 
		$L^1(\mathbb{R}_+)$-function 
		\[
		t \mapsto \frac{t^{\alpha-1}e^{-ct}}{\Gamma(\alpha)};
		\]
		see, e.g., 
		\cite[Lemma~3.3.4]{Haase2006}.
		Hence, $w_{\alpha,c} \in \mathcal{LM}
		\subset  \mathcal{B}$. 
\end{example}

Let $-A$ be the generator of a bounded $C_0$-semigroup
$(e^{-tA})_{t \geq 0}$ on a Hilbert space $H$.
Let $f\in \mathcal{B}$, and define a linear operator $f(A)$ on $H$ by
\begin{equation}
	\label{eq:Bcalc_def}
	\langle
	f(A)x,y
	\rangle
	\coloneqq
	f(\infty)
	\langle
	x,y
	\rangle -
	\frac{2}{\pi}
	\int_0^{\infty} \xi
	\int_{-\infty}^{\infty}
	\langle
	(\xi - i  \eta +A)^{-2} x,y
	\rangle
	f'(\xi + i \eta) d\eta d\xi
\end{equation}
for $x,y \in H$. 
Using the Plancherel theorem, we obtain
\begin{equation}
	\label{eq:resol_int_bound_x}
	\sup_{\xi >0 }\xi \int_{-\infty}^{\infty} \|(\xi-i\eta+A)^{-1}x\|^2 d\eta  = 
	2\pi \sup_{\xi >0}\xi 
	\int_0^{\infty} e^{-2\xi t} \big\|e^{-tA}x\big\|^2 dt
	\leq \pi K^2 \|x\|^2
\end{equation}
for all $x \in H$, where $K \coloneqq \sup_{t \geq 0} \|e^{-tA}\|$.
Since $-A^*$ also generates a bounded $C_0$-semigroup
$(e^{-tA^*})_{t \geq 0}$ and since $K = \sup_{t \geq 0} \|e^{-tA^*}\|$,
the Plancherel theorem yields
\begin{equation}
	\label{eq:resol_int_bound_y}
	\sup_{\xi >0 }\xi \int_{-\infty}^{\infty} \|(\xi+i\eta+A^*)^{-1}y\|^2 d\eta 
	\leq \pi K^2 \|y\|^2
\end{equation}
for all $y \in H$. The inequalities 
\eqref{eq:resol_int_bound_x} and \eqref{eq:resol_int_bound_y},
together with the Cauchy-Schwartz inequality, give
\[
\sup_{\xi >0 }\xi \int_{-\infty}^{\infty} | \langle (\xi-i\eta+A)^{-2}x,y \rangle| d\eta 
\leq \pi K^2 \|x\|\, \|y\|
\]
for all $x,y \in H$.
Therefore, 
the operator $f(A)$ defined by \eqref{eq:Bcalc_def}
is bounded on $H$ and satisfies 
\begin{equation}
	\label{eq:B_calc_bound}
	\|f(A)\| \leq |f(\infty)| + 2K^2 \|f\|_{\mathcal{B}_0} \leq 2K^2 \|f\|_{\mathcal{B}}.
\end{equation}
The map 
\[
\Phi_A \colon \mathcal{B} \to \mathcal{L}(H),\quad f\mapsto f(A)
\]
is the unique bounded algebraic homomorphism
from $\mathcal{B}$ to $\mathcal{L}(H)$
such that the equality 
$\Phi_A((\lambda+ \cdot \,)^{-1}) = (\lambda + A)^{-1} $
holds for all $\lambda \in \mathbb{C}_+$.
We call $\Phi_A$ 
the {\em $\mathcal{B}$-calculus} for $A$. 

For $f = \hat \mu \in \mathcal{LM}$, we define
\[
\Pi_A (f)x \coloneqq \int_{\mathbb{R}_+} e^{-tA} x \mu(dt),\quad x \in X.
\]
Then $\Pi_A (f) \in \mathcal{L}(X)$ and
$\|\Pi_A (f)\| \leq K \|f\|_{\textrm{HP}}$. 
The map
$\Pi_A \colon \mathcal{LM} \to \mathcal{L}(X)$
is an algebra homomorphism and is known as the 
{\em Hille-Phillips calculus} for $A$.
If $f \in \mathcal{LM}$, then
$\Pi_A (f)=\Phi_A (f)$.
It follows from this that $w_{\alpha,c}$
given in Example~\ref{ex:r_alpha_Ds} satisfies
$w_{\alpha,c}(A) = 
(A+c)^{-\alpha}$, since 
$
\Pi_A(w_{\alpha,c}) = 
(A+c)^{-\alpha}
$
by \cite[Proposition~3.3.5]{Haase2006}.

\subsection{Norm estimate for the Crandall--Pazy class of semigroups}
The norm estimate \eqref{eq:B_calc_bound} is valid 
for every generator of a bounded $C_0$-semigroup on a Hilbert space.
Focusing on the Crandall--Pazy class of bounded 
$C_0$-semigroups on Hilbert spaces,
we present a new norm estimate.
To this end, 
we let $q >0$ and define
\begin{equation}
	\label{eq:vertiii}
	\vertiii{f} \coloneqq
	\int_0^{\infty} \varphi_q(\xi) \sup_{\eta \in \mathbb{R}} |f'(\xi+i\eta)| d\xi,
	\quad f \in \mathcal{B},
\end{equation}
where
\begin{equation}
	\label{eq:psi_def}
	\varphi_q(\xi) \coloneqq 
	\begin{cases}
		1, & 0<\xi < 1, \\
		\xi^{q }, & \xi \geq 1.
	\end{cases}
\end{equation}
Let $f \in \mathcal{B}_0$ and $g \in \mathcal{B}$ satisfy $\vertiii{f},\vertiii{g} < \infty$.
Since $\|f\|_{\mathcal{B}_0} \leq \vertiii{f}$ and 
$f(\infty) = 0$, 
the inequality \eqref{eq:inf_norm_bound} yields $\|f\|_{\infty}   \leq \vertiii{f}$. Therefore,
we obtain
\begin{equation}
	\label{eq:product_estimate}
	\vertiii{fg} \leq 
	\vertiii{f} \big 
	(\vertiii{g} +\|g\|_{\infty} \big).
\end{equation}

\begin{example}
	\label{ex:ra_B0q_norm}
		Let $\alpha >0$, $c\geq d > 0$, and $q \in (0,1)$.
		Define $v_{\alpha,c,d}$ by 
		\[
		v_{\alpha,c,d}(z) \coloneqq \left( \frac{z+d}{z+c} \right)^{\alpha},
		\quad z \in \mathbb{C}_+.
		\]
		Then $\vertiii{v_{\alpha,c,d}} < \infty$.
		Indeed, since
		\begin{align*}
			\sup_{\eta \in \mathbb{R}}
			\left|
			v_{\alpha,c,d}'(\xi + i\eta)
			\right| \leq 
			\sup_{\eta \in \mathbb{R}} \frac{\alpha(c-d)}{(\xi+c)^2+\eta^2} 
			\leq \frac{\alpha(c-d)}{(\xi+c)^2}
		\end{align*}
		for all $\xi >0$,
		we have that 
		\begin{align*}
			\vertiii{v_{\alpha,c,d}} &\leq 
			\int_0^{1} \frac{\alpha(c-d)}{(\xi+c)^2} d\xi +
			\int_1^{\infty} \frac{\alpha(c-d)\xi^q}{(\xi+c)^2} d\xi \\
			&\leq 
			\frac{\alpha(c-d)}{c^2} + \frac{\alpha(c-d)}{(1-q)(1+c)^{1-q}}.
		\end{align*}
\end{example}

Using the integral condition on the resolvent  given in 
Lemma~\ref{lem:decay2resol_int}, we obtain a norm estimate tailored 
to the Crandall--Pazy class of $C_0$-semigroups.
\begin{proposition}
	\label{prop:AbqfA_bound}
	Let $-A$ be the generator of a bounded  $C_0$-semigroup
	$(e^{-tA})_{t \geq 0}$ on a Hilbert space $H$. 
	If $(e^{-tA})_{t \geq 0}$ is in the Crandall--Pazy class with parameter $\beta\in (0,1]$, then
	the following statements hold for a fixed $q \in (0,1/2)$, where 
	$\vertiii{\cdot}$ is as in \eqref{eq:vertiii}:
	\begin{enumerate}
		\renewcommand{\labelenumi}{\alph{enumi})}
		\item $A^{\beta q}f(A) \in \mathcal{L}(H)$ 
		for all $f \in \mathcal{B}_0$ satisfying  $\vertiii{f} < \infty$.
		\item There exists a constant $M>0$ such that 
		\begin{equation}
			\label{eq:AbqfA_bound}
			\|A^{\beta q}f(A)\| \leq M \vertiii{f}
		\end{equation}
		for all $f \in \mathcal{B}_0$ satisfying $\vertiii{f} < \infty$.
	\end{enumerate}
\end{proposition}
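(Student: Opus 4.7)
My strategy is to evaluate the $\mathcal{B}$-calculus formula \eqref{eq:Bcalc_def} against vectors of the form $(A^{*})^{\beta q}y$ for $y\in D((A^{*})^{\beta q})$, then bound the resulting double integral via Lemma~\ref{lem:decay2resol_int} together with the Plancherel estimate \eqref{eq:resol_int_bound_y}, and finally recover $A^{\beta q}f(A)$ by a duality argument. Since $f(\infty)=0$ and $(\xi-i\eta+A)^{-2}x\in D(A)\subset D(A^{\beta q})$, the commutation of $A^{\beta q}$ with resolvents of $A$ on $D(A^{\beta q})$ together with the Hilbert-space identity $(A^{\beta q})^{*}=(A^{*})^{\beta q}$ allow one to rewrite
\[
\langle f(A)x,(A^{*})^{\beta q}y\rangle
=-\frac{2}{\pi}\int_{0}^{\infty}\!\xi\!\int_{-\infty}^{\infty}
\langle A^{\beta q}(\xi-i\eta+A)^{-1}x,(\xi+i\eta+A^{*})^{-1}y\rangle\,
f'(\xi+i\eta)\,d\eta\,d\xi.
\]

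I then apply Cauchy--Schwartz in $\eta$, extracting the pointwise factor $\sup_{\eta}|f'(\xi+i\eta)|$ and dominating the remaining $\eta$-integral by the product of the $L^{2}(\mathbb{R},d\eta)$-norms of $A^{\beta q}(\xi-i\eta+A)^{-1}x$ and $(\xi+i\eta+A^{*})^{-1}y$. Lemma~\ref{lem:decay2resol_int}.a), applied with $\xi_{0}=1$, bounds the square of the first norm by $M_{1}\|x\|^{2}/\xi$ for $0<\xi\le 1$ and by $M_{1}\|x\|^{2}/\xi^{1-2q}$ for $\xi>1$, while \eqref{eq:resol_int_bound_y} bounds the square of the second by $\pi K^{2}\|y\|^{2}/\xi$ for every $\xi>0$. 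After multiplying by the $\xi$ in \eqref{eq:Bcalc_def}, the $\eta$-integral is dominated by a constant multiple of $\sup_{\eta}|f'(\xi+i\eta)|\,\|x\|\|y\|$ when $\xi\le 1$, and of $\xi^{q}\sup_{\eta}|f'(\xi+i\eta)|\,\|x\|\|y\|$ when $\xi>1$---precisely the weight $\varphi_{q}(\xi)$ from \eqref{eq:psi_def}. Integrating in $\xi$ yields
\[
|\langle f(A)x,(A^{*})^{\beta q}y\rangle|\le M\vertiii{f}\,\|x\|\,\|y\|.
\]

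To conclude, I invoke duality. Since $D((A^{*})^{\beta q})$ is dense in $H$ and $A^{\beta q}$ is closed with $((A^{*})^{\beta q})^{*}=A^{\beta q}$, the above estimate shows that the antilinear functional $y\mapsto\langle f(A)x,(A^{*})^{\beta q}y\rangle$ is bounded on $D((A^{*})^{\beta q})$; by the Riesz representation theorem this forces $f(A)x\in D(A^{\beta q})$ and $\|A^{\beta q}f(A)x\|\le M\vertiii{f}\|x\|$, delivering statements a) and b) simultaneously.

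The main obstacle I anticipate is technical rather than conceptual: carefully handling the unbounded fractional powers---the commutation of $A^{\beta q}$ with resolvents on $D(A^{\beta q})$, the adjoint identity $(A^{\beta q})^{*}=(A^{*})^{\beta q}$, and the density of $D((A^{*})^{\beta q})$. Once these are settled, the arithmetic of the weights ($\xi\cdot\xi^{-1}=1$ on $(0,1]$ and $\xi\cdot\xi^{q-1}=\xi^{q}$ on $(1,\infty)$) matches $\varphi_{q}$ exactly by design, so the substantive content of the argument is entirely concentrated in the Crandall--Pazy resolvent integral bound of Lemma~\ref{lem:decay2resol_int}.
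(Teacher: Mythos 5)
Your proof is correct and follows essentially the same route as the paper: the same double-integral representation from \eqref{eq:Bcalc_def}, the same Cauchy--Schwarz splitting, and the same two inputs (Lemma~\ref{lem:decay2resol_int}.a) with $\xi_0=1$ and the Plancherel bound for the adjoint semigroup), which combine to produce exactly the weight $\varphi_q$. The only difference is in the concluding step and is minor: the paper places $A^{\beta q}$ on $x\in D(A^{\beta q})$ and finishes with the closed graph theorem, whereas you place $(A^*)^{\beta q}$ on $y$ and finish via the adjoint-domain characterization; both are routine once the standard identity $(A^{\beta q})^*=(A^*)^{\beta q}$ is in hand.
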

\begin{proof}
	Let $x \in D(A^{\beta q} )$ and $y \in H$.
	From \eqref{eq:Bcalc_def}, we have that 
	for all $f \in \mathcal{B}_0$,  
	\[
	\langle
	f(A)A^{\beta q}x,y
	\rangle
	=
	-
	\frac{2}{\pi}
	\int_0^{\infty} \xi
	\int_{-\infty}^{\infty}
	\langle
	(\xi - i  \eta +A)^{-1} A^{\beta q}x,(\xi + i  \eta +A^*)^{-1}y
	\rangle
	f'(\xi + i \eta) d\eta d\xi.
	\]
	The Cauchy-Schwartz inequality yields
	\begin{align}
		&\int_{-\infty}^{\infty}
		|\langle
		(\xi - i  \eta +A)^{-1} A^{\beta q}x,(\xi + i  \eta +A^*)^{-1}y
		\rangle
		f'(\xi + i \eta)| d\eta \notag \\
		&\hspace{10pt} \leq\sup_{\eta \in \mathbb{R}} |f'(\xi+i
		\eta)| 
		\left(\int_{-\infty}^{\infty}
		\|A^{\beta q}(\xi - i \eta +A)^{-1}x \|^2 d\eta \right)^{1/2}
		\left(\int_{-\infty}^{\infty}
		\|(\xi + i \eta +A^*)^{-1}y \|^2 \right)^{1/2} \label{eq:CS_ineq}
	\end{align}
	for all $\xi >0$. 
	Lemma~\ref{lem:decay2resol_int} with $\xi_0 = 1$ 
	shows that 
	\begin{equation}
		\label{eq:W1x_bound}
		\sup_{\xi >0 }  \frac{\xi}{\varphi_q(\xi)^2}
		\int_{-\infty}^{\infty}
		\|A^{\beta q}(\xi - i \eta +A)^{-1}x \|^2 d\eta\leq M \|x\|^2
	\end{equation}
	for some $M>0$ independent of $x$.
	The Plancherel theorem  gives
	\begin{equation}
		\label{eq:W2y_bound}
		\sup_{\xi >0 } \xi \int_{-\infty}^{\infty}
		\|(\xi + i \eta +A^*)^{-1}y \|^2 d\eta \leq \pi K^2 \|y\|^2,
	\end{equation}
	where $K \coloneqq \sup_{t \geq 0} \|e^{-tA}\|$.
	
	Let $f\in \mathcal{B}_0$ satisfy $\vertiii{f} < \infty$. The
	estimates~\eqref{eq:CS_ineq}--\eqref{eq:W2y_bound} give
	\begin{align*}
		&\int_0^{\infty} \xi
		\int_{-\infty}^{\infty}
		|\langle
		(\xi - i  \eta +A)^{-1} A^{\beta q}x,(\xi + i  \eta +A^*)^{-1}y
		\rangle
		f'(\xi + i \eta)| d\eta d\xi\\
		&\quad \leq 
		\int_0^{\infty} \varphi_q(\xi) \sup_{\eta \in \mathbb{R}} |f'(\xi+i
		\eta)| \left(M \|x\|^2 \right)^{1/2}  \left(\pi K^2 \|y\|^2\right)^{1/2}  d\xi \\
		&\quad \leq  K \sqrt{\pi M}  \vertiii{f}  \|x\|\, \|y\|.
	\end{align*}
	Hence,
	\begin{equation}
		\label{eq:AbqfAx_bound}
		\|A^{\beta q}f(A)x\| \leq 2K \sqrt{\frac{M}{\pi}} \vertiii{f} \|x\|
	\end{equation}
	This, together with
	the closed graph theorem,
	shows that $A^{\beta q}f(A) \in \mathcal{L}(H)$. Moreover, 
	from the estimate \eqref{eq:AbqfAx_bound}, we obtain
	\[
	\|A^{\beta q}f(A)\| \leq 2K \sqrt{\frac{M}{\pi}} \vertiii{f},
	\]
	and the constant $2K\sqrt{M/\pi}$ depends on $A$ and $q$ but not on 
	$f$.
\end{proof}

\subsection{Decay estimate for Crank--Nicolson schemes}
\subsubsection{Holomorphic $C_0$-semigroups on Banach spaces}
\label{sec:CN_scheme_Holomorphic}
Now we study the rate of decay for the Crank--Nicolson scheme.
Before presenting a decay estimate in the case of the
Crandall--Pazy class of $C_0$-semigroups on Hilbert spaces,
we briefly discuss
the case of holomorphic $C_0$-semigroups on Banach spaces.
For an operator $A$ on a
Banach space $X$, it is well known that 
$-A$ is the generator of a sectorially bounded 
holomorphic $C_0$-semigroup on $X$ if and only if $A \in \Sect(\pi/2-)$; see, e.g.,  \cite[Theorem~3.7.11]{Arendt2001}
and 
\cite[Theorem~II.4.6]{Engel2000}.
Recall that for 
$0 < \omega_{\min} \leq \omega_{\max} < \infty$, we write
$(\omega_k)_{k \in \mathbb{N}} \in 
\mathcal{S}(\omega_{\min},\omega_{\max})$ if
$\omega_{\min} \leq 
\omega_k \leq \omega_{\max}$ for all $k \in \mathbb{N}$.
\begin{theorem}
	\label{thm:CN_holomorphic}
	Let an operator $A$ on a 
	Banach space $X$ satisfy $A \in \Sect(\pi/2-)$
	and $0 \in \varrho(A)$.
	For each $\alpha \geq 0$ and $0 < \omega_{\min} \leq  \omega_{\max} < \infty$,
	there exists a constant $M>0$ such that 
	\begin{equation}
		\label{eq:VA_bound}
		\left\|
		\left(
		\prod_{k=1}^n
		V_{\omega_k}(A)
		\right)
		A^{-\alpha}
		\right\| 
		\leq \frac{M}{n^{\alpha}}
	\end{equation}
	for all $n \in \mathbb{N}$ and 
	$(\omega_k)_{k \in \mathbb{N}} \in \mathcal{S}(\omega_{\min},\omega_{\max})$.
\end{theorem}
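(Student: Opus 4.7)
The plan is to prove the bound via the sectorial functional calculus, splitting into the cases $\alpha=0$ and $\alpha>0$.

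For $\alpha=0$ the statement reduces to $\sup_{n\in\mathbb{N}_0}\|\prod_{k=1}^n V_{\omega_k}(A)\|<\infty$ uniformly over $(\omega_k)\in\mathcal{S}(\omega_{\min},\omega_{\max})$, which is known for $A\in\Sect(\pi/2-)$ by the results \cite{Palencia1993,Bakaev1995,Piskarev2007,Casteren2011,Batty2025} recalled in the introduction. So I would assume $\alpha>0$ from now on. Since $A\in\Sect(\theta_0)$ for some $\theta_0<\pi/2$ and $0\in\varrho(A)$, the spectrum $\sigma(A)$ is contained in $\overline{\Sigma_{\theta_0}}$ and is bounded away from $0$. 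Fixing $\theta\in(\theta_0,\pi/2)$ and $\delta\in(0,\dist(0,\sigma(A)))$, let $\Gamma$ denote the positively oriented boundary of $\Sigma_\theta\cap\{|z|>\delta\}$, consisting of the two rays $\{re^{\pm i\theta}:r\geq\delta\}$ and the arc $\{\delta e^{i\psi}:|\psi|\leq\theta\}$; sectoriality yields $\|(z-A)^{-1}\|\leq C_0/|z|$ on $\Gamma$, and the poles of $f_n(z):=\left(\prod_{k=1}^n (z-\omega_k)/(z+\omega_k)\right)z^{-\alpha}$ lie in the left half-plane and at $0$, i.e.\ outside the region bounded by $\Gamma$.

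Using multiplicativity of the sectorial functional calculus of \cite{Haase2006}, I would represent
\[
\left(\prod_{k=1}^n V_{\omega_k}(A)\right) A^{-\alpha} = \frac{1}{2\pi i}\int_\Gamma f_n(z)(z-A)^{-1}\,dz,
\]
the integral converging absolutely because $|f_n(z)|=O(|z|^{-\alpha})$ at infinity and $(z-A)^{-1}$ is $O(|z|^{-1})$ on $\Gamma$. The heart of the argument is an $n$-uniform bound for $f_n$ along $\Gamma$. On each ray $z=re^{\pm i\theta}$ one has, for every $\omega>0$,
\[
\left|\frac{z-\omega}{z+\omega}\right|^{2}=1-\frac{4\omega r\cos\theta}{r^{2}+2\omega r\cos\theta+\omega^{2}},
\]
and bounding the numerator below by the choice $\omega=\omega_{\min}$ and the denominator above by $\omega=\omega_{\max}$ produces a single function $\rho(r):=\frac{4\omega_{\min}r\cos\theta}{r^{2}+2\omega_{\max}r\cos\theta+\omega_{\max}^{2}}$ satisfying $\prod_{k=1}^{n}|(z-\omega_k)/(z+\omega_k)|\leq e^{-n\rho(r)/2}$ uniformly in $(\omega_k)$. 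For $r\geq R_{0}$ large one has $\rho(r)\geq a/r$ with $a:=\omega_{\min}\cos\theta$, so the substitution $u=na/r$ gives
\[
\int_{R_{0}}^{\infty}\frac{e^{-na/r}}{r^{\alpha+1}}\,dr=\frac{1}{(na)^{\alpha}}\int_{0}^{na/R_{0}}u^{\alpha-1}e^{-u}\,du\leq \frac{\Gamma(\alpha)}{(na)^{\alpha}}.
\]
On the arc and on the bounded segment $r\in[\delta,R_{0}]$ of each ray, $\rho$ is bounded below by a positive constant, so these contributions decay exponentially in $n$. Combined with the resolvent bound, these estimates yield $\|(\prod V_{\omega_k}(A))A^{-\alpha}\|\leq M/n^{\alpha}$ with $M$ independent of $n$ and of the sequence.

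The main technical obstacle lies in the single pointwise contraction estimate: obtaining a bound for $\prod_{k=1}^n|(z-\omega_k)/(z+\omega_k)|$ that is simultaneously uniform over all sequences $(\omega_k)\in\mathcal{S}(\omega_{\min},\omega_{\max})$ requires the asymmetric choice of $\omega_{\min}$ in the numerator and $\omega_{\max}$ in the denominator of $\rho$, which in turn just barely leaves enough decay ($\rho(r)\gtrsim 1/r$ as $r\to\infty$) to integrate against $r^{-\alpha-1}$ and produce the sharp rate $n^{-\alpha}$. Everything else is a routine verification of the Dunford--Riesz representation and of the split of the contour into its arc, compact ray, and ray-tail pieces.
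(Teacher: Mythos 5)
Your proof is correct, but it takes a genuinely different route from the paper. The paper's own proof is a two-line reduction: it sets $B:=A^{-1}$, notes $B\in\Sect(\pi/2-)\cap\mathcal{L}(X)$, invokes Saito's theorem \cite[Theorem~4]{Saito2004} for the products $\big(\prod_{k=1}^n V_{1/\omega_k}(B)\big)B^{\alpha}$, and transfers the bound via the identity $V_{1/\omega}(B)=-V_{\omega}(A)$; an alternative proof in the appendix runs through the $\mathcal{D}$-calculus, where the key work is the estimate $\|f_{n,\alpha,(\omega_k)}\|_{\mathcal{D}_{s,0}}=O(n^{-\alpha})$ obtained from Lemma~\ref{lem:hol1} and a beta-function computation (Lemma~\ref{lem:tc_int_bound}). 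Your argument replaces both by a self-contained Dunford--Riesz contour integral on $\partial(\Sigma_\theta\cap\{|z|>\delta\})$, with the pointwise contraction bound $\prod_{k=1}^n|(z-\omega_k)/(z+\omega_k)|\le e^{-n\rho(r)/2}$, $\rho(r)\gtrsim 1/r$, doing the work that Lemma~\ref{lem:hol1} does in the appendix, and the substitution $u=na/r$ producing $\Gamma(\alpha)(na)^{-\alpha}$ where the appendix uses ${\mathrm B}(n+1,\alpha)=O(n^{-\alpha})$. The asymmetric choice of $\omega_{\min}$ in the numerator and $\omega_{\max}$ in the denominator of $\rho$ is exactly the right device to get uniformity over $\mathcal{S}(\omega_{\min},\omega_{\max})$, and the checks you leave implicit (that $\rho(r)<1$ so that $1-\rho(r)\le e^{-\rho(r)}$ applies, exponential decay on the arc and on the compact ray segments, validity of the multiplicative extended calculus for the invertible sectorial operator $A$) all go through. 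What your approach buys is independence from \cite{Saito2004} and from the $\mathcal{D}$-calculus machinery; what it costs is that the case $\alpha=0$ must be outsourced to the known uniform boundedness results, since $f_n$ then fails to decay at infinity and the contour integral is no longer absolutely convergent --- a gap you correctly identify and patch by citation.
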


\begin{proof}
	Define $B \coloneqq A^{-1}$.
	Since $B  \in \Sect(\pi/2-) \cap \mathcal{L}(X)$,
	it follows from 
	\cite[Theorem~4]{Saito2004} that 
	the following estimate holds:
	For each $\alpha \geq 0$ and $0 < \omega_{\min} \leq  \omega_{\max} < \infty$,
	there exists $ M>0$ such that 
	\[
	\left\|
	\left(
	\prod_{k=1}^n
	V_{1/\omega_k}(B)
	\right)
	B^{\alpha}
	\right\| 
	\leq \frac{M}{n^{\alpha}}
	\]
	for all $n \in \mathbb{N}$ and 
	$(\omega_k)_{k \in \mathbb{N}} \in \mathcal{S}(\omega_{\min},\omega_{\max})$.
	Since
	$
	V_{1/\omega}(B) = -V_{\omega}(A)
	$
	for all $\omega >0$, we obtain the desired estimate \eqref{eq:VA_bound}.
\end{proof}
\begin{remark}
		Theorem~\ref{thm:CN_holomorphic} can also be proved 
		via the $\mathcal{D}$-calculus and the
		$\mathcal{H}$-calculus introduced in \cite{Batty2023}.
		A detailed proof based on
		the $\mathcal{D}$-calculus is provided in Appendix for interested readers. 
		Rational approximation of holomorphic $C_0$-semigroups 
		via the
		$\mathcal{H}$-calculus can be found in \cite{Batty2025}.
\end{remark}

We show that the decay rate given in 
Theorem~\ref{thm:CN_holomorphic}
cannot in general be improved.
\begin{proposition}
	Let $A$ be a normal 
	operator on a Hilbert space $H$ such that 
	$\sigma(A) \subset \{
	\lambda \in \mathbb{C}: \re \lambda \geq \delta
	\}$
	for some $\delta >0$.
	Assume that
	there exist a sequence $(\lambda_k)_{k \in \mathbb{N}}$
	of complex numbers
	and a constant $\theta \in (0,\pi/2)$ such that 
	$\lambda_k \in \sigma(A) \cap \Sigma_{\theta}$
	for all $k \in \mathbb{N}$ and
	$|\lambda_k| \to \infty$ as $k \to \infty$.
	Then
	\begin{equation}
		\label{eq:inflim_normal}
		\limsup_{n \to \infty} n^{\alpha} \|V_{\omega} (A)^nA^{-\alpha}\| >0
	\end{equation}
	for all $\alpha,\omega >0$.
\end{proposition}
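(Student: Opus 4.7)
The plan is to reduce the operator-norm quantity to a scalar supremum over the spectrum via the spectral theorem for normal operators, and then to exhibit a subsequence $n_k$ along which $n_k^{\alpha}\|V_{\omega}(A)^{n_k} A^{-\alpha}\|$ stays bounded away from zero, with $n_k$ calibrated to be of the same order of magnitude as $|\lambda_k|$.

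Since $A$ is normal and $\sigma(A)\subset\{\re\lambda\geq\delta\}\subset\mathbb{C}_+$, the Borel functional calculus applied to the bounded continuous function $\lambda\mapsto V_{\omega}(\lambda)^n\lambda^{-\alpha}$ on $\sigma(A)$ yields
\[
\|V_{\omega}(A)^n A^{-\alpha}\|\;=\;\sup_{\lambda\in\sigma(A)}|V_{\omega}(\lambda)|^n\,|\lambda|^{-\alpha}.
\]
Set $r_k:=|\lambda_k|$ and $n_k:=\lfloor r_k\rfloor$, so that $n_k\to\infty$ and $n_k/r_k\to 1$. Evaluating the supremum at $\lambda=\lambda_k$ gives
\[
n_k^{\alpha}\,\|V_{\omega}(A)^{n_k}A^{-\alpha}\|\;\geq\;\Bigl(\tfrac{n_k}{r_k}\Bigr)^{\alpha}\,|V_{\omega}(\lambda_k)|^{n_k}.
\]

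A direct computation, using $\re\lambda_k\leq r_k$, shows
\[
|V_{\omega}(\lambda_k)|^2\;=\;\frac{r_k^2-2\omega\re\lambda_k+\omega^2}{r_k^2+2\omega\re\lambda_k+\omega^2}\;\geq\;\Bigl(\tfrac{r_k-\omega}{r_k+\omega}\Bigr)^2,
\]
so $|V_{\omega}(\lambda_k)|^{n_k}\geq\bigl(1-\tfrac{2\omega}{r_k+\omega}\bigr)^{n_k}\to e^{-2\omega}$ as $k\to\infty$ by the standard limit $(1-c/r)^r\to e^{-c}$. Combined with $(n_k/r_k)^{\alpha}\to 1$, this forces
\[
\limsup_{n\to\infty}n^{\alpha}\|V_{\omega}(A)^nA^{-\alpha}\|\;\geq\;e^{-2\omega}\;>\;0.
\]

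The one point demanding care is the calibration of the iteration count: a super-linear growth $n_k/r_k\to\infty$ would drive the Blaschke factor $|V_{\omega}(\lambda_k)|^{n_k}$ to zero, while a sub-linear growth $n_k/r_k\to 0$ would kill the polynomial factor $(n_k/r_k)^{\alpha}$. The linear choice $n_k\asymp r_k$ balances the two because $|V_{\omega}(\lambda)|\sim 1-2\omega/|\lambda|$ for spectral points of large modulus, so raising to the $n_k$-th power gives a positive limit exactly when $n_k\asymp r_k$. The sector hypothesis $\lambda_k\in\Sigma_\theta$ plays no explicit role in the argument beyond ensuring $\re\lambda_k\geq 0$, which is already guaranteed by $\sigma(A)\subset\{\re\lambda\geq\delta\}$; it merely pins down the location of the given spectral sequence.
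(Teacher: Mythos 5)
Your proof is correct and follows essentially the same route as the paper's: both reduce the operator norm to a scalar supremum over $\sigma(A)$ via normality and obtain the lower bound by evaluating at $\lambda_k$ with the iteration count calibrated so that $n_k \asymp |\lambda_k|$. Your execution is somewhat more streamlined (the direct choice $n_k=\lfloor|\lambda_k|\rfloor$ together with the limit $(1-2\omega/(r+\omega))^{r}\to e^{-2\omega}$, in place of the paper's monotonicity analysis of $g_{n,\alpha}$ and its explicit evaluation at the turning point), and your observation that the sector hypothesis is not really needed is consistent with the paper's own argument, whose lower bound likewise uses only $|\cos\phi|\le 1$.
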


\begin{proof}
	Let $c \coloneqq \delta/2$.
	Define $B \coloneqq A- c$
	and $\mu_k \coloneqq \lambda_k - c \in \sigma(B)$. Then
	$\mu_k \in \Sigma_{\theta'}$ for all $k \in \mathbb{N}$ 
	and  some $\theta' \in (\theta,\pi/2)$.
	
	Let $n \in \mathbb{N}$ and $\alpha,\omega >0$.
	Define
	\[
	f_{n,\alpha}(z) \coloneqq 
	\frac{(z+c-\omega)^{n}}{(z+c+\omega)^{n+\alpha}},\quad 
	z \in \mathbb{C}_+.
	\]
	For all $r \geq |c-\omega|$ and $\phi \in (0,\theta')$, we obtain
	\begin{align}
		|f_{n,\alpha}(r e^{i\phi})| &=
		\frac{\big(r^2 + 2 (c-\omega) r\cos \phi  + (c-\omega)^2\big)^{n/2}}
		{\big(r^2 + 2  (c+\omega) r \cos \phi+ (c+\omega)^2\big)^{(n+\alpha)/2}} \notag \\
		&\geq 
		\frac{(r- |c-\omega|)^{n}}{(r+c+\omega)^{n+\alpha}} \eqqcolon
		g_{n,\alpha}(r).
		\label{eq:fna_lower_bound}
	\end{align}
	Moreover, $g_{n,\alpha}'(r) \leq 0$
	if 
	\[
	r \geq |c-\omega| + \frac{c+\omega + |c-\omega|}{\alpha}n \eqqcolon
	\tau(n)
	\]
	is satisfied. 
	Since $|\mu_k| \to \infty$ as $k \to \infty$, there exists $k_0 \in \mathbb{N}$
	such that $|\mu_k| \geq \tau(1)$ for all $k \geq k_0$.
	For each $k \geq k_0$,
	we take $n_k \in \mathbb{N}$ satisfying
	\[
	\tau(n_k)\leq |\mu_k|
	< \tau(n_k+1).
	\]
	Then $n_k \to \infty$ as $k \to \infty$.
	Since $g_{n_k,\alpha}'(r) \leq 0$
	for all $r \geq |\mu_k|$, we obtain
	\begin{equation}
		\label{eq:g_decrease}
		g_{n_k,\alpha} (|\mu_k|) \geq g_{n_k,\alpha}\big(\tau(n_k+1)\big)
	\end{equation}
	for all $k \geq k_0$. A simple calculation shows 
	that for all $k \geq k_0$,
	\begin{equation}
		\label{eq:g_nk_rep}
		g_{n_k,\alpha}\big(\tau(n_k+1)\big)=
		\frac{1}{  (c+\omega+|c-\omega|)^{\alpha}}
		\left(
		1+ \frac{\alpha}{n_k+1}
		\right)^{-n_k}  \left(
		\frac{\alpha}{n_k+\alpha+1}
		\right)^{\alpha}.
	\end{equation}
	Using the normality of $B$ and the estimate~\eqref{eq:fna_lower_bound},
	we have that
	\begin{align}
		\label{eq:V_lower_bound}
		\|V_{\omega} (A)^{n_k} (A+\omega)^{-\alpha} \| =	
		\sup_{\mu \in \sigma(B)} |f_{n_k,\alpha}(\mu)| 
		\geq g_{n_k,\alpha} (|\mu_k|) 
	\end{align}
	for all $k \geq k_0$.
	Combining \eqref{eq:g_decrease}--\eqref{eq:V_lower_bound},
	we obtain the desired estimate \eqref{eq:inflim_normal}.
\end{proof}

\subsubsection{Crandall--Pazy class of 
	$C_0$-semigroups on Hilbert spaces}
\label{sec:CP_CN_decay}
Let $-A$ be the generator of an
exponentially stable $C_0$-semigroup  
on a Hilbert space. Then, for each $\alpha > 0$ and 
$0 < \omega_{\min} \leq  \omega_{\max} < \infty$,
there exists $M>0$ such that 
\begin{equation}
	\label{eq:exp_case_alpha}
	\left\|\left(\prod_{k=1}^n
	V_{\omega_k}(A)\right) A^{-\alpha}
	\right\| \leq  \frac{M}{n^{\alpha/2}} 
\end{equation}
for all $n \in \mathbb{N}$ and 
$(\omega_k)_{k \in \mathbb{N}} \in \mathcal{S}(\omega_{\min},\omega_{\max})$; see \cite[Theorem~2]{Wakaiki2024JEE}.
Now we give a decay estimate for 
exponentially stable $C_0$-semigroups in the Crandall--Pazy class, which
bridges the gap between the estimates \eqref{eq:VA_bound} and 
\eqref{eq:exp_case_alpha}.
For $\alpha,\varepsilon>0$ and $\beta \in (0,1]$, 
define the function $L_{\alpha,\beta, \varepsilon}$
on $(0,\infty)$ by
\begin{equation}
	\label{eq:Labe}
	L_{\alpha,\beta, \varepsilon}(\tau) \coloneqq
	\begin{cases}
		\log (\tau+1), & 0 < \alpha < \dfrac{2-\beta}{2}, \vspace{3pt}\\
		\big(\log (\tau+1)\big)^{2\alpha/(2-\beta) + \varepsilon},
		& \alpha \geq \dfrac{2-\beta}{2}.\vspace{3pt}
	\end{cases}
\end{equation}
\begin{theorem}
	\label{thm:VA_bound_nonanalytic}
	Let $-A$ be the generator of 
	an exponentially stable $C_0$-semigroup $(e^{-tA})_{t \geq 0}$
	in the Crandall--Pazy class with parameter $\beta \in (0,1]$
	on a Hilbert space $H$.
	For each $\alpha,\varepsilon >0$ and $0 < \omega_{\min} \leq  \omega_{\max}< \infty$,
	there exists a constant $M>0$ such that
	\begin{equation}
		\label{eq:VA_bound_nonanalytic}
		\left\|
		\left(
		\prod_{k=1}^n
		V_{\omega_k}(A)
		\right)
		A^{-\alpha}
		\right\| 
		\leq M\frac{L_{\alpha,\beta, \varepsilon}(n)}{ n^{\alpha/(2-\beta)}}
	\end{equation}
	for all $n \in \mathbb{N}$ and 
	$(\omega_k)_{k \in \mathbb{N}} \in \mathcal{S}(\omega_{\min},\omega_{\max})$,
	where $L_{\alpha,\beta, \varepsilon}$ is as in \eqref{eq:Labe}.
\end{theorem}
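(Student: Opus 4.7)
The plan is to apply the $\mathcal{B}$-calculus estimate in Proposition~\ref{prop:AbqfA_bound}. Since $(e^{-tA})_{t \geq 0}$ is exponentially stable, $0 \in \varrho(A)$, and for any fixed $c > 0$ the operator $(A+c)^{\alpha} A^{-\alpha}$ is bounded; hence it suffices to bound $\|p_n(A)(A+c)^{-\alpha}\|$, where $p_n(z) \coloneqq \prod_{k=1}^n V_{\omega_k}(z)$ denotes the product of Cayley transforms.

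Fix $q \in (0, 1/2)$ to be chosen later (depending on $\alpha$, $\beta$, $\varepsilon$), and define $f_n(z) \coloneqq p_n(z)(z+c)^{-\alpha - \beta q}$. Since $p_n(\infty) = 1$ and $(z+c)^{-\alpha - \beta q} \to 0$ as $\re z \to \infty$, we have $f_n \in \mathcal{B}_0$. Using that the operator $(A+c)^{\beta q} A^{-\beta q}$ is bounded, the functional-calculus identity $f_n(A)(A+c)^{\beta q} = A^{\beta q} f_n(A) \cdot (A+c)^{\beta q} A^{-\beta q}$ yields
\begin{equation*}
p_n(A)(A+c)^{-\alpha} = A^{\beta q} f_n(A) \cdot (A+c)^{\beta q} A^{-\beta q},
\end{equation*}
and Proposition~\ref{prop:AbqfA_bound} then gives $\|p_n(A)(A+c)^{-\alpha}\| \leq C \vertiii{f_n}$ for a constant $C$ independent of $n$.

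The remaining step is to show $\vertiii{f_n} \leq C' L_{\alpha,\beta,\varepsilon}(n)/n^{\alpha/(2-\beta)}$. Starting from the logarithmic-derivative formula $p_n'/p_n = \sum_k [(z-\omega_k)^{-1} - (z+\omega_k)^{-1}]$ and the pointwise bound
\begin{equation*}
|p_n(\xi + i \eta)|^2 \leq \exp\left(- \sum_{k=1}^n \frac{4 \xi \omega_k}{(\xi + \omega_k)^2 + \eta^2}\right),
\end{equation*}
one estimates $\sup_\eta |f_n'(\xi+i\eta)|$ and integrates against $\varphi_q(\xi)$. The natural approach is to split the integration defining $\vertiii{f_n}$ at a threshold $\xi_n$ of order $n^{-1/(2-\beta)}$ (possibly modified by a logarithmic factor): on $(0, \xi_n]$ one uses $|p_n| \leq 1$ together with the polynomial factor $|z+c|^{-\alpha - \beta q - 1}$; on $(\xi_n, \infty)$ one invokes the exponential smallness of $|p_n|$. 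For $\alpha < (2-\beta)/2$ a single fixed $q \in (0,1/2)$ suffices and produces only a single logarithmic loss; for $\alpha \geq (2-\beta)/2$ one takes $q$ close to $1/2$ (depending on $\varepsilon$), which yields the logarithmic factor $(\log(n+1))^{2\alpha/(2-\beta) + \varepsilon}$.

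The main obstacle is the estimate of $\vertiii{f_n}$: the factor $|p_n'(z)|$ can grow linearly in $n$ near the imaginary axis, where $|p_n(z)|$ is close to $1$, so this growth must be compensated by the rapid decrease of $|p_n(z)|$ away from the axis together with the polynomial factor $(z+c)^{-\alpha-\beta q}$. The desired rate $n^{-\alpha/(2-\beta)}$ emerges from balancing these effects via the threshold $\xi_n$ and by handling the two regimes $\xi \in (0,1)$ and $\xi \geq 1$ of the weight $\varphi_q$ separately.
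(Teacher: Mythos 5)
Your overall architecture for the range $0<\alpha<(2-\beta)/2$ matches the paper's: apply Proposition~\ref{prop:AbqfA_bound} to the product of Cayley transforms multiplied by a polynomial weight, and choose $q=\alpha/(2-\beta)$ so that the total negative power of $A$ is $\alpha$ while the function norm decays like $n^{-q}$ up to one logarithm. (The paper writes the weight as $(z+c+\omega_{\min})^{-2q}$ applied to the shifted generator, but that is only a reparametrization of your $(z+c)^{-\alpha-\beta q}$ with $\alpha+\beta q=2q$.) Two points, however, need attention. First, your sketch of the estimate for $\vertiii{f_n}$ places the splitting threshold at $\xi_n\sim n^{-1/(2-\beta)}$; this is at the wrong scale. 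Since $\big(\tfrac{\xi-\omega}{\xi+\omega}\big)^n\approx e^{-2\omega n/\xi}$, the Blaschke product only loses its exponential smallness when $\xi\gtrsim n$, and the paper's analysis (Lemma~\ref{lem:g_h_int_bound}) indeed splits at $\xi_1(n)=O(n)$, with the dominant contribution coming from the region $1\leq\xi\lesssim n$ where the weight $\xi^q$ competes with the polynomial decay. Your plan as written would not produce the stated rate without reworking this balance.

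Second, and more seriously, the case $\alpha\geq(2-\beta)/2$ cannot be handled by ``taking $q$ close to $1/2$.'' A single application of Proposition~\ref{prop:AbqfA_bound} gains only a factor $A^{\beta q}$ with $q<1/2$, and the function norm $\vertiii{f_n}$ with polynomial weight of order $\gamma=\alpha+\beta q$ decays at best like $n^{-q}$ (balanced case $\gamma=2q$) or $n^{-\gamma/2}=n^{-(\alpha+\beta q)/2}$ (case $\gamma>2q$); requiring either of these to be $O(n^{-\alpha/(2-\beta)})$ forces $q\geq\alpha/(2-\beta)\geq 1/2$, which is excluded. Concretely, for $\beta=1$ and $\alpha=1$ the target is $n^{-1}$, while no admissible $q$ yields better than roughly $n^{-3/4}$ from one application (this is exactly the phenomenon recorded in Remark~\ref{rem:other_a}). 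The missing ingredient is the paper's iteration: write $\prod_{k=1}^{n}V_{\omega_k}(A)\,A^{-\alpha}$ as a product of $\ell+2$ blocks, each of the form $\big(\prod_{k\in I_j}V_{\omega_k}(A)\big)A^{-\gamma}$ with $\gamma<(2-\beta)/2$ and $|I_j|\approx n/(\ell+1)$, apply the small-$\alpha$ case to each block to get the power $n^{-\alpha/(2-\beta)}$ at the cost of $(\log(n+1))^{\ell+1}$, and then interpolate via the moment inequality (Lemma~\ref{lem:moment_ineq}) between consecutive integer multiples of $\gamma$ to reduce the logarithmic exponent to $2\alpha/(2-\beta)+\varepsilon$. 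Without this step your argument does not establish the theorem for $\alpha\geq(2-\beta)/2$.
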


We will prove Theorem~\ref{thm:VA_bound_nonanalytic} via the 
$\mathcal{B}$-calculus.
Fix $c >0$ and $0<\omega_{\min}\leq \omega_{\max} < \infty$. 
Define
\begin{equation}
	\label{eq:fnaw_def_CP}
	f_{n,\alpha,(\omega_k)}(z) \coloneqq
	\frac{1}{(z+c+\omega_{\min})^{\alpha}} 
	\prod_{k=1}^n
	\frac{z+c-\omega_k}{z+c+\omega_k},\quad z \in \mathbb{C}_+,
\end{equation}
where $n \in \mathbb{N}$, $\alpha>0$, and 
$(\omega_k)_{k\in \mathbb{N}} \in \mathcal{S}(\omega_{\min},\omega_{\max})$.
It was shown in \cite[Proposition~4]{Wakaiki2024JEE} that 
$\|f_{n,\alpha,(\omega_k)}\|_{\mathcal{B}_0} = O(n^{-\alpha/2})$ as $n \to \infty$
for $\alpha >0$.
To prove Theorem~\ref{thm:VA_bound_nonanalytic}, we here estimate
$\vertiii{f_{n,\alpha,(\omega_k)}}$ for $q \in (0,1)$ and $\alpha > q$.
For that purpose,
we define the function $F_{\alpha,q}\colon \mathbb{N} \to \mathbb{R}_+$ by
\begin{equation}
	\label{eq:Faq_def}
	F_{\alpha,q}(n) \coloneqq
	\begin{cases}
		\dfrac{1}{n^{\alpha-q}},& q < \alpha < 2q, \vspace{3pt}\\
		\dfrac{\log (n+1)}{n^{q}},& \alpha = 2q, \vspace{3pt}\\
		\dfrac{1}{n^{\alpha/2}},& \alpha > 2q. \vspace{3pt}\\
	\end{cases}
\end{equation}
\begin{proposition}
	\label{prop:fnaw_bound_nonanalytic}
	Let $c>0$, $q \in (0,1)$, and $0 < \omega_{\min} \leq \omega_{\max}< \infty$.
	If $\alpha > q$, then
	there exists a constant $M>0$  such that 
	the function $f_{n,\alpha,(\omega_k)}$ defined by \eqref{eq:fnaw_def_CP} satisfies
	\begin{equation}
		\label{eq:fnaw_bound_nonanalytic}
		\bigvertiii{f_{n,\alpha,(\omega_k)}} \leq M F_{\alpha,q}(n)
	\end{equation}
	for all $n \in \mathbb{N}$ and $(\omega_k)_{k \in \mathbb{N}} \in
	\mathcal{S}(\omega_{\min},\omega_{\max})$, where the function $F_{\alpha,q}$ is as in 
	\eqref{eq:Faq_def}.
\end{proposition}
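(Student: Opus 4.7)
The plan is to differentiate $f_{n,\alpha,(\omega_k)}$ logarithmically, establish a pointwise bound on $|f'(\xi+i\eta)|$, take the supremum in $\eta$, and then integrate against the weight $\varphi_q(\xi)$. Writing $w(z)=(z+c+\omega_{\min})^{-\alpha}$ and $g_k(z)=(z+c-\omega_k)/(z+c+\omega_k)$, logarithmic differentiation gives $f_{n,\alpha,(\omega_k)}'=f_{n,\alpha,(\omega_k)}\cdot \phi$ with
\[
\phi(z)=-\frac{\alpha}{z+c+\omega_{\min}}+\sum_{k=1}^n\frac{2\omega_k}{(z+c)^2-\omega_k^2}.
\]
Throughout, I would set $S:=S(\xi,\eta)=(\xi+c+\omega_{\min})^2+\eta^2$, and allow implicit constants to depend only on $c,\omega_{\min},\omega_{\max},\alpha,q$, so that all bounds are uniform in $(\omega_k)\in\mathcal{S}(\omega_{\min},\omega_{\max})$.

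First I would establish a pointwise estimate. From $|g_k(\xi+i\eta)|^2=1-4\omega_k(\xi+c)/|z+c+\omega_k|^2$, the inequality $\log(1-x)\leq -x$, and the comparability $|z+c+\omega_k|^2\leq C S$, one deduces
\[
\prod_{k=1}^n|g_k(\xi+i\eta)|\leq \exp\!\left(-\frac{c_1 n(\xi+c)}{S}\right)
\]
for some $c_1>0$. Combined with $|w(\xi+i\eta)|=S^{-\alpha/2}$ and the elementary estimate $|\phi(\xi+i\eta)|\leq C(S^{-1/2}+nS^{-1})$ (valid once $\xi$ exceeds a fixed threshold that separates each term from its real singularity at $\omega_k-c$), this gives
\[
|f_{n,\alpha,(\omega_k)}'(\xi+i\eta)|\leq C\,S^{-(\alpha+1)/2}\bigl(1+nS^{-1/2}\bigr)\exp\!\left(-\frac{c_1 n(\xi+c)}{S}\right).
\]

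Next I would optimize in $\eta$, equivalently in $S\geq S_0(\xi):=(\xi+c+\omega_{\min})^2$, via the substitution $u:=c_1 n(\xi+c)/S$. The right-hand side then reduces to a sum of two terms of the form $C u^{p}e^{-u}/(n^{a}\xi^{b})$ with explicit exponents. For $\xi\lesssim n$ the admissible range of $u$ covers the interior maximum of $u^{(\alpha+2)/2}e^{-u}$, and the dominant term yields
\[
\sup_{\eta\in\mathbb{R}}|f_{n,\alpha,(\omega_k)}'(\xi+i\eta)|\leq \frac{C}{n^{\alpha/2}\,\xi^{(\alpha+2)/2}}.
\]
For $\xi\gtrsim n$ the optimum is forced to the boundary $S=S_0(\xi)$, producing $\sup_\eta|f_{n,\alpha,(\omega_k)}'(\xi+i\eta)|\leq C(\xi^{-(\alpha+1)}+n\xi^{-(\alpha+2)})$.

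Finally I would split $\vertiii{f_{n,\alpha,(\omega_k)}}=I_0+I_1+I_2$ with $I_0=\int_0^1$, $I_1=\int_1^{C_2 n}$, and $I_2=\int_{C_2 n}^{\infty}$. Since $\varphi_q\leq 1$ on $(0,1]$, the piece $I_0$ is controlled by $\|f_{n,\alpha,(\omega_k)}\|_{\mathcal{B}_0}=O(n^{-\alpha/2})$, which was established in \cite[Proposition~4]{Wakaiki2024JEE}. The middle piece reduces to $I_1\leq C n^{-\alpha/2}\int_1^{C_2 n}\xi^{q-(\alpha/2+1)}d\xi$, and the three cases in the definition of $F_{\alpha,q}(n)$ emerge naturally according as $q-\alpha/2$ is negative, zero, or positive; the tail $I_2$ contributes $O(n^{-(\alpha-q)})$, which is either absorbed into or matches the first subcase. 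The main obstacle I anticipate is securing the sharp form of the pointwise estimate uniformly in $(\omega_k)$ and performing the $u$-substitution so that the dependence on $n$ and $\xi$ separates cleanly; once this separation is achieved, the three asymptotic regimes fall out of integrating a power of $\xi$ against the weight $\xi^q$.
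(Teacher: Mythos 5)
Your argument reaches the same three asymptotic regimes as the paper, but by a genuinely different route to the key pointwise estimate. The paper first replaces every $\omega_k$ by a single worst-case value $\tilde\omega_{\min}$ via a monotonicity lemma (Lemma~\ref{lem:each_bound}, which requires $c^2\geq\tilde\omega_{\min}\omega_{\max}$ and hence forces the auxiliary factor $v_\alpha$ and the algebra estimate \eqref{eq:product_estimate}), and then computes $\sup_{s\geq0}g_{n,\alpha,\omega}(\xi,s)$ in closed form (see \eqref{eq:g_sup}), locating the transition point $\xi_1(n)\sim Cn$ exactly. You instead keep all the $\omega_k$, bound the Blaschke product by $\exp(-c_1n(\xi+c)/S)$ via $\log(1-x)\leq-x$, control the logarithmic derivative by $C(S^{-1/2}+nS^{-1})$, and optimize $u^pe^{-u}$ by calculus; you also outsource the region near $\xi=0$ to the known bound $\|f_{n,\alpha,(\omega_k)}\|_{\mathcal{B}_0}=O(n^{-\alpha/2})$, which indeed dominates $F_{\alpha,q}(n)$ in all three cases. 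Your method avoids the $\tilde\omega_{\min}$/$v_\alpha$ bookkeeping and the exact formula for the supremum, at the cost of slightly cruder (but sufficient) constants; the final integration $\int\xi^{q-1-\alpha/2}\,d\xi$ and the resulting case split are identical in both proofs.

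One loose end should be closed. Your bound on the logarithmic derivative fails near the zeros of the factors $g_k$, i.e.\ at $\xi=\omega_k-c$ with $\eta$ small (there $|\phi|$ blows up while the exponential bound on $\prod_k|g_k|$ does not vanish, so the product of the two bounds does not control $|f'|$). You acknowledge a ``fixed threshold,'' but your split point $\xi=1$ need not exceed $\omega_{\max}-c$, so part of $I_1$ may lie in the bad region. The fix is the same device you already use for $I_0$: replace the split point by $\xi_*:=\max\{1,\,2\omega_{\max}-c\}$ and estimate $\int_0^{\xi_*}\varphi_q(\xi)\sup_\eta|f'(\xi+i\eta)|\,d\xi\leq\varphi_q(\xi_*)\,\|f_{n,\alpha,(\omega_k)}\|_{\mathcal{B}_0}=O(n^{-\alpha/2})=O(F_{\alpha,q}(n))$. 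With that adjustment the proof is complete.
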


Before proceeding to the proof of Proposition~\ref{prop:fnaw_bound_nonanalytic},
we recall some technical results from \cite{Wakaiki2024JEE}, where
$\|f_{n,\alpha,(\omega_k)}\|_{\mathcal{B}_0}$ was examined 
instead of $\vertiii{f_{n,\alpha,(\omega_k)}}$.
The following estimate was obtained in \cite[Lemma~2]{Wakaiki2024JEE}.
\begin{lemma}
	\label{lem:each_bound}
	Let 
	$c> 0$ and $0 < \omega_{\min} \leq \omega_{\max} < \infty$
	satisfy  $c^2 \geq \omega_{\min} \omega_{\max}$. 
	If $\omega_{\min} \leq \omega \leq \omega_{\max}$, then
	\[
	\frac{(\xi+c-\omega)^2+s}{(\xi+c+\omega)^2+s} \leq 
	\frac{(\xi+c-\omega_{\min})^2+s}{(\xi+c+\omega_{\min})^2+s} 
	\] 
	for all $\xi >0$ and $s \geq 0$. 
\end{lemma}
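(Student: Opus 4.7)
The plan is to reduce the claimed inequality, after cross-multiplying, to a single elementary polynomial inequality in the auxiliary variable $u\coloneqq \xi+c$, and then to read off the hypothesis $c^{2}\geq \omega_{\min}\omega_{\max}$ as exactly what is needed.

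First I would fix $\xi>0$ and $s\geq 0$ and set $u\coloneqq \xi+c>c$. Since both denominators $(u+\omega)^2+s$ and $(u+\omega_{\min})^2+s$ are strictly positive, the target inequality is equivalent to
\[
\bigl[(u-\omega)^2+s\bigr]\bigl[(u+\omega_{\min})^2+s\bigr]
\;\le\;
\bigl[(u-\omega_{\min})^2+s\bigr]\bigl[(u+\omega)^2+s\bigr].
\]
Expanding each side as $PQ+s(P+Q)+s^2$ with the obvious labeling and subtracting, the $s^2$ terms cancel. A short bookkeeping computation, using the identities
\[
(u-\omega_{\min})^{2}(u+\omega)^{2}-(u-\omega)^{2}(u+\omega_{\min})^{2}=4u(\omega-\omega_{\min})(u^{2}-\omega\omega_{\min}),
\]
\[
(u-\omega_{\min})^{2}+(u+\omega)^{2}-(u-\omega)^{2}-(u+\omega_{\min})^{2}=4u(\omega-\omega_{\min}),
\]
collapses the difference (RHS minus LHS) to
\[
4u(\omega-\omega_{\min})\bigl(u^{2}-\omega\omega_{\min}+s\bigr).
\]
Thus the inequality is equivalent to $u^{2}-\omega\omega_{\min}+s\ge 0$ (with equality trivially at $\omega=\omega_{\min}$).

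The last step is to verify this nonnegativity. Since $u=\xi+c>c$, we have $u^{2}>c^{2}$, and by hypothesis $c^{2}\ge\omega_{\min}\omega_{\max}$; since $\omega\le\omega_{\max}$, this yields $u^{2}>\omega_{\min}\omega\ge\omega\omega_{\min}-s$. I do not foresee a genuine obstacle here: the only mildly delicate point is carrying out the cross-multiplication cleanly, because a naive attempt to prove the inequality by monotonicity of $\omega\mapsto[(u-\omega)^{2}+s]/[(u+\omega)^{2}+s]$ fails (the derivative changes sign at $\omega=\sqrt{u^{2}+s}$, which can lie inside $[\omega_{\min},\omega_{\max}]$). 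The factorized form above bypasses this issue and makes the role of the assumption $c^{2}\ge\omega_{\min}\omega_{\max}$ completely transparent.
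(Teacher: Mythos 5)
Your proof is correct: the cross-multiplication is legitimate (both denominators are positive), the two expansion identities check out (writing $a=u^2-\omega\omega_{\min}$, $b=u(\omega-\omega_{\min})$, the first difference is $(a+b)^2-(a-b)^2=4ab$), and the resulting factor $4u(\omega-\omega_{\min})(u^2-\omega\omega_{\min}+s)$ is nonnegative precisely because $u=\xi+c>c$ and $c^2\ge\omega_{\min}\omega_{\max}\ge\omega_{\min}\omega$. Note that the paper itself does not prove this lemma but merely cites \cite[Lemma~2]{Wakaiki2024JEE}, so your argument supplies a self-contained elementary verification; your side remark that pointwise monotonicity in $\omega$ fails (the derivative changes sign at $\omega=\sqrt{u^2+s}$) while the factorized difference only requires the weaker condition $u^2+s\ge\omega\omega_{\min}$ is accurate and explains exactly why the hypothesis $c^2\ge\omega_{\min}\omega_{\max}$ is the right one.
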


Let $n \in \mathbb{N}$ and $c,\alpha,\omega>0$. Define
\begin{align}
	g_{n,\alpha,\omega}(\xi,s) &\coloneqq
	\frac{\big( 
		(\xi + c - \omega)^2 + s
		\big)^{n/2}}{ \big( 
		(\xi +c + \omega)^2 + s
		\big)^{(n+\alpha+1)/2}}, \label{eq:gnaw_def}\\
	h_{n,\alpha,\omega}(\xi,s) &\coloneqq
	\frac{\big( 
		(\xi + c- \omega)^2 + s
		\big)^{(n-1)/2}}{ \big( 
		(\xi + c+ \omega)^2 + s
		\big)^{(n+\alpha+1)/2}}\label{eq:hnaw_def}
\end{align}
for $\xi >0$ and $s \geq 0$. 
Then $\sup_{s \geq 0} g_{n,\alpha,\omega}(\xi,s)$ can be written as follows; 
see the proof of \cite[Lemma~3]{Wakaiki2024JEE} for the detailed derivation.
There exists $n_1 \in \mathbb{N}$ such that 
the equation
\[
-\xi^2 + 2
\left(
\frac{2\omega n}{\alpha+1} -c+ \omega
\right) \xi
+
\frac{4c \omega n}{\alpha+1} - (c - \omega)^2 = 0
\]
has a unique positive  solution $\xi_1(n)$ for all 
$n \geq n_1$. For each $n \geq n_1$,
\begin{equation}
	\label{eq:xi1_expression}
	\xi_1(n) 
	=
	\left(
	\frac{2\omega n}{\alpha+1} - c + \omega
	\right) + \frac{2\omega \sqrt{n (n + \alpha + 1)}}{\alpha + 1}.
\end{equation}
Using the positive solution $\xi_1(n)$, 
we obtain
\begin{equation}
	\label{eq:g_sup}
	\sup_{s \geq 0} g_{n,\alpha,\omega}(\xi,s) =
	\begin{cases}
		\displaystyle 	\left(
		\frac{n}{n+\alpha+1}
		\right)^{n/2}
		\left(
		\frac{\alpha+1}{4\omega(n+\alpha+1)}
		\right)^{(\alpha+1)/2}\frac{1}{(\xi+c)^{(\alpha+1)/2}}  , &\hspace{-1pt} \xi \leq \xi_1(n), \vspace{5pt}\\
		\displaystyle \frac{(\xi+c-\omega)^{n}}{(\xi+c+\omega)^{n+\alpha +1}}, &\hspace{-1pt} \xi> \xi_1(n)
	\end{cases} 
\end{equation}
for all $n \geq n_1$.

Using the expression of $\sup_{s \geq 0} g_{n,\alpha,\omega}(\xi,s)$
given in \eqref{eq:g_sup},
we prove the following  lemma, which will be useful in the proof of Proposition~\ref{prop:fnaw_bound_nonanalytic}.

\begin{lemma}
	\label{lem:g_h_int_bound}
	Let $n \in \mathbb{N}$, $c,\omega >0$, and 
	$\alpha > q >0$.
	Define $\varphi_q$, $g_{n,\alpha,\omega}$, 
	and $h_{n,\alpha,\omega}$ by \eqref{eq:psi_def}, 
	\eqref{eq:gnaw_def}, and \eqref{eq:hnaw_def},
	respectively.
	Then
	\begin{equation}
		\label{eq:g_bound_sup}
		\sup_{n \in \mathbb{N}} \int_0^{\infty}
		\varphi_q(\xi) \sup_{s \geq 0} 
		g_{n,\alpha,\omega}(\xi,s) d\xi  < \infty
	\end{equation}
	and
	\begin{equation}
		\label{eq:h_bound_sup}
		\sup_{n \in \mathbb{N}} \int_0^{\infty}
		\varphi_q(\xi) \sup_{s \geq 0} 
		h_{n,\alpha,\omega}(\xi,s) d\xi  < \infty.
	\end{equation}
	Moreover,
	\begin{equation}
		\label{eq:g_bound} 
		\int_0^{\infty}
		\varphi_q(\xi) \sup_{s \geq 0} 
		g_{n,\alpha,\omega}(\xi,s) d\xi =
		\begin{cases}
			O\left(
			\dfrac{1}{n^{\alpha-q}}
			\right),& 
			q<\alpha < 2q+1, \vspace{3pt}\\
			O\left(
			\dfrac{\log n}{n^{q+1}}
			\right),& 
			\alpha = 2q+1, \vspace{3pt}\\
			O\left(
			\dfrac{1}{n^{(\alpha+1)/2}}
			\right),& 
			\alpha > 2q+1
		\end{cases}
	\end{equation}
	and
	\begin{equation}
		\label{eq:h_bound} 
		\int_0^{\infty}
		\varphi_q(\xi) \sup_{s \geq 0} 
		h_{n,\alpha,\omega}(\xi,s) d\xi =
		\begin{cases}
			O\left(
			\dfrac{1}{n^{\alpha-q+1}}
			\right),& 
			q< \alpha < 2q, \vspace{3pt}\\
			O\left(
			\dfrac{\log n}{n^{q+1}}
			\right),& 
			\alpha = 2q, \vspace{3pt}\\
			O\left(
			\dfrac{1}{n^{\alpha/2 + 1}}
			\right),& 
			\alpha > 2q
		\end{cases}
	\end{equation}
	as $n \to \infty$.
\end{lemma}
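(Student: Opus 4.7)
The plan is to exploit the explicit formula \eqref{eq:g_sup} for $\sup_{s\geq 0} g_{n,\alpha,\omega}(\xi,s)$, which on $[0,\xi_1(n)]$ equals a prefactor times $(\xi+c)^{-(\alpha+1)/2}$ and on $(\xi_1(n),\infty)$ equals the ``natural'' expression $(\xi+c-\omega)^{n}/(\xi+c+\omega)^{n+\alpha+1}$. Splitting the integral in \eqref{eq:g_bound} at $\xi_1(n)$, the head piece equals a prefactor of order $n^{-(\alpha+1)/2}$ (using $(n/(n+\alpha+1))^{n/2}\to e^{-(\alpha+1)/2}$) times $\int_0^{\xi_1(n)}\varphi_q(\xi)(\xi+c)^{-(\alpha+1)/2}d\xi$, while on the tail I would crudely estimate $\sup_{s}g_{n,\alpha,\omega}(\xi,s)\leq (\xi+c+\omega)^{-(\alpha+1)}$. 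Using $\xi_1(n)\sim 4\omega n/(\alpha+1)$ from \eqref{eq:xi1_expression} and elementary integration of power functions, the tail contributes $O(n^{q-\alpha})$ whenever $\alpha>q$, and the head splits into three regimes according to whether $q-(\alpha+1)/2$ is less than, equal to, or greater than $-1$, giving $O(n^{-(\alpha+1)/2})$, $O(n^{-q-1}\log n)$, or $O(n^{q-\alpha})$, respectively. A direct case-by-case comparison of head and tail then produces exactly the three cases in \eqref{eq:g_bound}.

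For the uniform bound \eqref{eq:g_bound_sup}, a single global estimate suffices. Because $|\xi+c-\omega|\leq \xi+c+\omega$ for all $\xi>0$, we have $g_{n,\alpha,\omega}(\xi,s)\leq (\xi+c+\omega)^{-(\alpha+1)}$ for every $n\in\mathbb{N}$, $\xi>0$, and $s\geq 0$, so the integral is uniformly dominated by $\int_0^\infty \varphi_q(\xi)(\xi+c+\omega)^{-(\alpha+1)}d\xi$, which converges under the hypothesis $\alpha>q$.

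The estimates for $h_{n,\alpha,\omega}$ should follow without additional work: a direct comparison of \eqref{eq:gnaw_def} and \eqref{eq:hnaw_def} yields the identity $h_{n,\alpha,\omega}=g_{n-1,\alpha+1,\omega}$. Applying the $g$-estimates with the substitutions $n\mapsto n-1$ and $\alpha\mapsto\alpha+1$ immediately delivers \eqref{eq:h_bound} and \eqref{eq:h_bound_sup}, the sub-cases $q<\alpha<2q$, $\alpha=2q$, $\alpha>2q$ arising from $q<\alpha+1<2q+1$, $\alpha+1=2q+1$, $\alpha+1>2q+1$ in \eqref{eq:g_bound}; the finitely many small values of $n$ for which $n-1<n_1$ pose no issue since they can be absorbed into constants.

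The main obstacle I foresee is purely bookkeeping: no single estimate is subtle, but one must track carefully which contribution (head vs.\ tail) dominates in each regime and retain the logarithmic factor exactly at the transitions $\alpha=2q$ (for $h$) and $\alpha=2q+1$ (for $g$); an off-by-one-power slip would destroy sharpness precisely at those boundary exponents.
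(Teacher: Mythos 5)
Your proposal is correct and follows essentially the same route as the paper: the crude bound $\sup_{s\geq 0}g_{n,\alpha,\omega}(\xi,s)\leq(\xi+c+\omega)^{-(\alpha+1)}$ for the uniform estimates, the split of the integral at $\xi_1(n)$ using \eqref{eq:g_sup} with $\xi_1(n)=O(n)$ for the three-regime asymptotics, and the reduction of the $h$-estimates to the $g$-estimates via $h_{n,\alpha,\omega}=g_{n-1,\alpha+1,\omega}$. The case bookkeeping you outline matches the paper's computation exactly.
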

\begin{proof}
	For all $\xi>0$ and $s \geq 0$,
	\[
	g_{n,\alpha,\omega}(\xi,s)  \leq \frac{1}{\big((\xi+c+\omega)^2+s\big)^{(\alpha+1)/2}}.
	\]
	Hence,
	\[
	\sup_{s \geq 0} g_{n,\alpha,\omega}(\xi,s)  \leq \frac{1}{(\xi+c+\omega)^{\alpha+1}}
	\]
	for all $\xi >0$. Since $\alpha > q$, we obtain
	\begin{align*}
		\int_0^{\infty}
		\varphi_q(\xi) \sup_{s \geq 0} 
		g_{n,\alpha,\omega}(\xi,s) d\xi
		&\leq 
		\int_0^{1}
		\frac{1}{(\xi+c+\omega)^{\alpha+1}}
		d\xi 
		+
		\int_1^{\infty}
		\frac{\xi^q}{(\xi+c+\omega)^{\alpha+1}}
		d\xi \\
		&\leq 
		\frac{1}{(c+\omega)^{\alpha+1}} +
		\frac{1}{(\alpha-q)(1+c+\omega)^{\alpha - q}}
	\end{align*}
	for all $n \in \mathbb{N}$. Hence, the estimate \eqref{eq:g_bound_sup} 
	for $g_{n,\alpha,\omega}$ holds.
	The estimate \eqref{eq:h_bound_sup} 
	for $h_{n,\alpha,\omega}$
	can be proved in a similar way.
	Since the estimate \eqref{eq:h_bound} for $h_{n,\alpha,\omega}$
	can be obtained  by replacing $n$ with $n-1$ and $\alpha$
	with $\alpha+1$ in the estimate \eqref{eq:g_bound} for 
	$g_{n,\alpha,\omega}$,
	we will show only the estimate \eqref{eq:g_bound}.

	Let $n_1 \in \mathbb{N}$ and $\xi_1(n) >0$ be
	as in the paragraph following Lemma~\ref{lem:each_bound}.
	Using \eqref{eq:g_sup}, we obtain
	\begin{align}
		&\int_0^{\xi_1(n)} \varphi_q(\xi) \sup_{ s \geq 0}
		g_{n,\alpha,\omega}(\xi,x) d\xi =
		\left(
		\frac{n}{n+\alpha+1}
		\right)^{n/2}
		\left(
		\frac{\alpha+1}{4\omega(n+\alpha+1)}
		\right)^{(\alpha+1)/2}
		\int_0^{\xi_1(n)}
		\frac{\varphi_q(\xi)}{(\xi+c)^{(\alpha+1)/2}} d\xi
		\label{eq:vp_g_estimate1}
	\end{align}
	for all $n \geq n_1$.
	We see from \eqref{eq:xi1_expression}  that there exists $n_2 \geq n_1$
	such that $\xi_1(n) \geq 1$ for all $n \geq n_2$. If $n \geq n_2$, then
	\begin{align}
		\int_0^{\xi_1(n)}
		\frac{\varphi_q(\xi)}{(\xi+c)^{(\alpha+1)/2}} d\xi &=
		\int_0^{1}
		\frac{1}{(\xi+c)^{(\alpha+1)/2}} d\xi
		+ 
		\int_1^{\xi_1(n)}
		\frac{\xi^q}{(\xi+c)^{(\alpha+1)/2}} d\xi \notag \\
		&\leq 
		\frac{1}{c^{(\alpha+1)/2}}+ 
		\int_1^{\xi_1(n)}
		\frac{1}{(\xi+c)^{(\alpha-2q+1)/2}} d\xi.
		\label{eq:vp_g_estimate2}
	\end{align}
	Since $\xi_1(n)= O(n)$ as $n \to \infty$,
	it follows that 
	\begin{equation}
		\label{eq:vp_g_estimate3}
		\int_1^{\xi_1(n)}
		\frac{1}{(\xi+c)^{(\alpha-2q+1)/2}} d\xi
		=
		\begin{cases}
			O(n^{(1-\alpha+2q)/2}), & \alpha < 2q+1, \\
			O(\log n), & \alpha = 2q+1, \\
			O(1), & \alpha > 2q+1\\
		\end{cases}
	\end{equation}
	as $n \to \infty$.
	Combining \eqref{eq:vp_g_estimate1}--\eqref{eq:vp_g_estimate3},
	we obtain
	\begin{equation}
		\label{eq:xi1_below}
		\int_0^{\xi_1(n)} \varphi_q(\xi) \sup_{ s \geq 0}
		g_{n,\alpha,\omega}(\xi,x) d\xi =
		\begin{cases}
			O\left( \dfrac{1}{n^{\alpha-q}}\right), & \alpha < 2q+1, \vspace{3pt}\\
			O\left( \dfrac{\log n}{n^{q+1}}\right), & \alpha = 2q+1, \vspace{3pt}\\
			O\left( \dfrac{1}{n^{(\alpha+1)/2}}\right), & \alpha > 2q+1 \vspace{3pt}\\
		\end{cases}
	\end{equation}
	as $n \to \infty$.
	From  \eqref{eq:g_sup}, 
	we also have that 
	\begin{align*}
		\int_{\xi_1(n)}^{\infty} \varphi_q(\xi) \sup_{ s \geq 0}
		g_{n,\alpha,\omega}(\xi,x) d\xi &=
		\int_{\xi_1(n)}^{\infty} 
		\frac{\xi^q(\xi+c-\omega)^{n}}{(\xi+c+\omega)^{n+\alpha +1}} d\xi \\
		&\leq
		\int_{\xi_1(n)}^{\infty} \frac{1}{(\xi+c+\omega)^{\alpha-q+1}}d\xi
	\end{align*}
	for all $n \geq n_2$.
	This estimate and  the assumption $\alpha > q$ give
	\begin{equation}
		\label{eq:xi1_above}
		\int_{\xi_1(n)}^{\infty} \varphi_q(\xi) \sup_{ s \geq 0}
		g_{n,\alpha,\omega}(\xi,x) d\xi = O
		\left(
		\frac{1}{n^{\alpha - q}}
		\right)
	\end{equation}
	as $n \to \infty$. Thus, the estimate \eqref{eq:g_bound} is obtained
	from \eqref{eq:xi1_below} and  \eqref{eq:xi1_above}.
\end{proof}

We are now ready to prove
Proposition~\ref{prop:fnaw_bound_nonanalytic}.

\begin{proof}[Proof of Proposition~\ref{prop:fnaw_bound_nonanalytic}.]
	Define 
	\[
	\tilde \omega_{\min} \coloneqq \min 
	\left\{ \omega_{\min},\, \frac{c^2}{\omega_{\max}} \right\}. 
	\] 
	Then
	$\mathcal{S}(\omega_{\min},\omega_{\max}) \subset \mathcal{S}(\tilde \omega_{\min},\omega_{\max})$ 
	and 
	$c^2 \geq \tilde \omega_{\min} \omega_{\max}$. 
	The function $f_{n,\alpha,(\omega_k)}$ defined by \eqref{eq:fnaw_def_CP} satisfies
	\[
	f_{n,\alpha,(\omega_k)} =
	v_{\alpha}
	\tilde f_{n,\alpha,(\omega_k)}
	\]
	for all $(\omega_k)_{k \in \mathbb{N}} \in 
	\mathcal{S}(\omega_{\min},\omega_{\max})$,
	where 
	\begin{align*}
		v_{\alpha}(z) &\coloneqq
		\left(
		\frac{z+c+\tilde \omega_{\min}}{z+c+\omega_{\min}}
		\right)^{\alpha},\\
		\tilde f_{n,\alpha,(\omega_k)} (z) &\coloneqq 
		\frac{1}{(z+c+\tilde \omega_{\min})^{\alpha}} 
		\prod_{k=1}^n
		\frac{z+c-\omega_k}{z+c+\omega_k}
	\end{align*}
	for $z \in \mathbb{C}_+$.
	
	The derivative $\tilde f_{n,\alpha,(\omega_k)}'$ is given by
	\begin{align}
		\tilde f_{n,\alpha,(\omega_k)}'(z) 
		&=\frac{-\alpha}{(z+c+\tilde \omega_{\min})^{\alpha+1}}
		\prod_{k=1}^n
		\frac{z+c-\omega_k}{z+c+\omega_k} +
		\frac{2}{(z+c+\tilde \omega_{\min})^{\alpha}}
		\sum_{\ell=1}^n
		\frac{\omega_{\ell}}{(z+c+\omega_{\ell})^2}
		\prod_{k=1,\,k\not=\ell }^n
		\frac{z+c-\omega_k}{z+c+\omega_k}.
		\label{eq:f_deriv_expression}
	\end{align}
	Let $\xi >0$, $\eta \in \mathbb{R}$, and 
	$s \coloneqq \eta^2 \geq 0$.
	Define 
	$g_{n,\alpha,\tilde\omega_{\min}}$ and $h_{n,\alpha,\tilde\omega_{\min}}$ by \eqref{eq:gnaw_def} and \eqref{eq:hnaw_def} with
	$\omega=\tilde \omega_{\min}$, respectively.
	Then we obtain
	\begin{align*}
		\big|\tilde f_{n,\alpha,(\omega_k)}'(\xi+i \eta)\big| \leq 
		\alpha g_{n,\alpha,\tilde\omega_{\min}}(\xi,s)  + 2\omega_{\max} n h_{n,\alpha,\tilde\omega_{\min}}(\xi,s) 
	\end{align*}
	for all $(\omega_k)_{k \in \mathbb{N}} \in 
	\mathcal{S}(\tilde\omega_{\min},\omega_{\max})$,
	by using Lemma~\ref{lem:each_bound} with $s = \eta^2$.
	From Lemma~\ref{lem:g_h_int_bound}, we see that there exists
	$\tilde M>0$  such that 
	\[
	\bigvertiii{\tilde f_{n,\alpha,(\omega_k)}} \leq \tilde M F_{\alpha,q}(n)
	\]
	for all $n \in \mathbb{N}$ and $(\omega_k)_{k \in \mathbb{N}} \in 
	\mathcal{S}(\tilde \omega_{\min},\omega_{\max})$, where
	$F_{\alpha,q}$ is as in \eqref{eq:Faq_def}.
	
	Since 
	$\vertiii{v_{\alpha}} < \infty$ for $q \in (0,1)$ as shown in Example~\ref{ex:ra_B0q_norm},
	it follows from \eqref{eq:product_estimate} that
	\[
	\bigvertiii{f_{n,\alpha,(\omega_k)}} \leq 
	\big(
	\vertiii{v_{\alpha}} + \|v_{\alpha}\|_{\infty} 
	\big) \bigvertiii{\tilde f_{n,\alpha,(\omega_k)}}
	\]
	for all $(\omega_k)_{k \in \mathbb{N}} \in 
	\mathcal{S}(\omega_{\min},\omega_{\max})$.
	Thus, if we define 
	$M \coloneqq 
	(\vertiii{v_{\alpha}} + \|v_{\alpha}\|_{\infty} 
	)
	\tilde M $, then 
	the desired estimate \eqref{eq:fnaw_bound_nonanalytic}
	holds
	for all $n \in \mathbb{N}$ and $(\omega_k)_{k \in \mathbb{N}} \in 
	\mathcal{S}(\omega_{\min},\omega_{\max})$.
\end{proof}

We finally 
prove Theorem~\ref{thm:VA_bound_nonanalytic}, 
using the estimate \eqref{eq:fnaw_bound_nonanalytic} 
in the case $\alpha = 2q$.
\begin{proof}[Proof of Theorem~\ref{thm:VA_bound_nonanalytic}.]
	
	By assumption, there exists $c>0$ such that $-A+c$ 
	generates
	an exponentially stable $C_0$-semigroup on $H$.
	This $C_0$-semigroup
	is also in the Crandall--Pazy class
	with parameter $\beta$.
	Define $B \coloneqq A - c$, and
	let $q \in (0,1/2)$.
	The function $f_{n,2q,(\omega_k)}$ 
	defined by \eqref{eq:fnaw_def_CP}
	satisfies
	\begin{align}
		\label{eq:fB_V_CP}
		f_{n,2q,(\omega_k)}(B) &=
		\left(
		\prod_{k=1}^n
		V_{\omega_k}(A)
		\right) (A+ \omega_{\min} )^{-2q}
	\end{align}
	for all $n \in \mathbb{N}$ and $(\omega_k)_{k \in \mathbb{N}} \in
	\mathcal{S}(\omega_{\min},\omega_{\max})$.
	
	By Proposition~\ref{prop:fnaw_bound_nonanalytic},
	there exists $M_1 >0$ such that
	\begin{equation}
		\label{eq:fn2qo_bound1}
		\bigvertiii{f_{n,2q,(\omega_k)}} \leq M_1
		\frac{\log (n+1)}{n^{q}}
	\end{equation}
	for all $n \in \mathbb{N}$ and $(\omega_k)_{k \in \mathbb{N}} \in
	\mathcal{S}(\omega_{\min},\omega_{\max})$.
	From Proposition~\ref{prop:AbqfA_bound},
	we obtain 
	$B^{\beta q}f_{n,2q,(\omega_k)}(B)
	\in \mathcal{L}(H)$, and 
	there exists $M_2>0$ such that  
	\begin{equation}
		\label{eq:fn2qo_bound2}
		\big\|B^{\beta q} f_{n,2q,(\omega_k)}(B)\big\| \leq M_2 \bigvertiii{f_{n,2q,(\omega_k)}}
	\end{equation}
	for $n \in \mathbb{N}$ and $(\omega_k)_{k \in \mathbb{N}} \in
	\mathcal{S}(\omega_{\min},\omega_{\max})$.
	Since
	\[
	(A+\omega_{\min})^{2q} A^{-2q},\,
	(B+c)^{\beta q} B^{-\beta q}
	\in \mathcal{L}(H),
	\]
	it follows that 
	\[
	M_3 \coloneqq \|
	(B+c)^{\beta q} B^{-\beta q} (A+\omega_{\min})^{2q} A^{-2q}
	\| < \infty.
	\]
	From
	\eqref{eq:fB_V_CP}, we obtain
	\begin{align}
		\left\|
		\left(
		\prod_{k=1}^n
		V_{\omega_k}(A)
		\right)A^{-(2-\beta)q}\right\| 
		&\leq M_3 \big\|B^{\beta q} f_{n,2q,(\omega_k)}(B)\big\|
		\label{eq:fn2qo_bound3}
	\end{align}
	for all $n \in \mathbb{N}$ 
	and $(\omega_k)_{k \in \mathbb{N}} \in
	\mathcal{S}(\omega_{\min},\omega_{\max})$.
	By
	the estimates \eqref{eq:fn2qo_bound1}--\eqref{eq:fn2qo_bound3},
	\begin{equation}
		\label{eq:V_bound_ell0}
		\left\|
		\left(
		\prod_{k=1}^n
		V_{\omega_k}(A)
		\right)A^{-(2-\beta)q }
		\right\| \leq M_4 \frac{\log (n+1)}{n^q}
	\end{equation}
	for all $n \in \mathbb{N}$ and $(\omega_k)_{k \in \mathbb{N}} \in
	\mathcal{S}(\omega_{\min},\omega_{\max})$,
	where $M_4 \coloneqq M_1M_2M_3$.
	
	Let $0 < \alpha < (2-\beta)/2$, and set $q \coloneqq \alpha/(2-\beta)$.
	Then $0 < q < 1/2$ and $(2-\beta)q = \alpha$.
	By the definition \eqref{eq:Labe} of $L_{\alpha,\beta,\varepsilon}$,
	\[
	\frac{\log (n+1)}{n^q} = \frac{
		L_{\alpha,\beta,\varepsilon}(n)}{n^{\alpha/(2-\beta)}}
	\] 
	for all $n \in \mathbb{N}$.
	Hence, we conclude from \eqref{eq:V_bound_ell0}
	that the desired estimate
	\eqref{eq:VA_bound_nonanalytic} holds 
	in the case $0 < \alpha < (2-\beta)/2$.

	Take 
	$\alpha \geq (2-\beta)/2$ arbitrarily. Let  $\ell \in \mathbb{N}$ and
	$0 \leq \vartheta_0 < (2-\beta)/2$
	satisfy
	\[
	\alpha = \frac{\ell(2-\beta)}{2} + \vartheta_0.
	\]
	Choose $\delta  >0$ small enough such that
	\[
	0 <\vartheta  \coloneqq \vartheta_0 + \frac{\delta \ell}{2} < \frac{2-\beta}{2}.
	\]
	By definition, 
	\[
	\alpha = \ell \gamma+ \vartheta, \quad \text{where~}
	\gamma  \coloneqq \frac{2-\beta - \delta }{2} > 0.
	\]
	Define $\tau(n) \coloneqq \lfloor n/(\ell+1)\rfloor$ for $n \in \mathbb{N}$.
	For $n \in \mathbb{N}$ and $(\omega_k)_{k \in \mathbb{N}} \in
	\mathcal{S}(\omega_{\min},\omega_{\max})$, we obtain
	\[
	\left( 
	\prod_{k=1}^n
	V_{\omega_k}(A)
	\right)A^{-\alpha} =
	\prod_{j=0}^{\ell+1}
	W_{j,n,(\omega_k)},
	\]
	where
	\begin{align*}
		W_{0,n,(\omega_k)} &\coloneqq 
		\left( \prod_{k=1}^{\tau(n)}
		V_{\omega_k}(A)
		\right)A^{-\vartheta}, \\
		W_{j,n,(\omega_k)} &\coloneqq 
		\left( \prod_{k=j  \tau(n)+1}^{(j + 1) \tau(n)}
		V_{\omega_k}(A)
		\right)A^{-\gamma}, \quad j=1,2,\dots,\ell,\\
		W_{\ell+1,n,(\omega_k)} &\coloneqq 	\prod_{k=(\ell+1)\tau(n)+1}^{n}
		V_{\omega_k}(A). 
	\end{align*}
	Here, for $n \in \mathbb{N}$
	the empty product 
	$\prod_{k=n}^{n-1} V_{\omega_k}(A)$ is interpreted as the identity operator.
	Note that  
	\[
	\gamma, \vartheta < \frac{2-\beta}{2}.
	\]
	By the estimate \eqref{eq:VA_bound_nonanalytic} in the case $0 < \alpha < (2-\beta)/2$,
	there exists $M_5>0$
	such that for all $n \in \mathbb{N}$ and $(\omega_k)_{k \in \mathbb{N}} \in
	\mathcal{S}(\omega_{\min},\omega_{\max})$,
	\begin{align*}
		\big\|
		W_{0,n,(\omega_k)}
		\big\|\leq 
		M_5
		\frac{\log (n+1)}{n^{\vartheta/(2-\beta)} } 
	\end{align*}
	and
	\begin{align*}
		\big\|
		W_{j,n,(\omega_k)}
		\big\|\leq 
		M_5
		\frac{\log (n+1)}{n^{\gamma/(2-\beta)}},\quad j=1,2,\dots,\ell.
	\end{align*}
	Moreover,
	\[
	\big\|
	W_{\ell+1,n,(\omega_k)}
	\big\| \leq 
	\max \big\{ 1,\,
	\max\{
	\|V_{\omega}(A)\|^\ell: \omega_{\min} \leq \omega \leq \omega_{\max} \} \big\}
	\]
	for all $n \in \mathbb{N}$ and $(\omega_k)_{k \in \mathbb{N}} \in
	\mathcal{S}(\omega_{\min},\omega_{\max})$. Hence,
	there exists $M_6>0$ such that
	\begin{equation}
		\label{eq:VA_bound_nonanalytic_ell}
		\left\|
		\left(
		\prod_{k=1}^n
		V_{\omega_k}(A)
		\right)
		A^{-\alpha}
		\right\| 
		\leq M_6\frac{\big(\log (n+1)\big)^{\ell +1}}{ n^{\alpha/(2-\beta)}},
		\quad \text{where~}
		\ell = \left\lfloor
		\frac{2\alpha}{2-\beta}
		\right\rfloor,
	\end{equation}
	for all $n \in \mathbb{N}$ and 
	$(\omega_k)_{k \in \mathbb{N}} \in \mathcal{S}(\omega_{\min},\omega_{\max})$.
	
	Let $\varepsilon >0$ be given.
	If $\delta>0$ is sufficiently small, then $\gamma = (2-\beta - \delta)/2$
	satisfies
	\begin{equation}
		\label{eq:delta_eps}
		\frac{\alpha}{\gamma} =
		\frac{2\alpha}{2-\beta - \delta} \leq 
		\frac{2\alpha}{2-\beta} + \varepsilon
	\end{equation}
	and 
	\begin{equation}
		\label{eq:ell_gamma_alpha}
		(\ell-1) \frac{2-\beta}{2} 
		\leq
		\ell \gamma <
		\alpha < (\ell + 1) \gamma.
	\end{equation}
	By the moment inequality given in Lemma~\ref{lem:moment_ineq}, there exists $C>0$
	such that 
	\begin{align}
		\label{eq:appli_MI}
		\left\|
		A^{-\alpha}
		\prod_{k=1}^n
		V_{\omega_k}(A)
		\right\| 
		&\leq 
		C
		\left\|
		A^{-\ell \gamma}
		\prod_{k=1}^n
		V_{\omega_k}(A)
		\right\|^{\ell+1- \alpha/\gamma}
		\left\|
		A^{-(\ell+1)\gamma}
		\prod_{k=1}^n
		V_{\omega_k}(A)
		\right\|^{\alpha/\gamma - \ell } 
	\end{align}
	for all $n \in \mathbb{N}$ and 
	$(\omega_k)_{k \in \mathbb{N}} \in \mathcal{S}(\omega_{\min},\omega_{\max})$.
	We see from \eqref{eq:ell_gamma_alpha} that 
	\[
	\left\lfloor
	\frac{2\ell \gamma}{2-\beta}
	\right\rfloor = \ell-1\quad 
	\text{and}\quad 
	\left\lfloor
	\frac{2(\ell+1) \gamma}{2-\beta}
	\right\rfloor = \ell.
	\]
	By the estimates \eqref{eq:VA_bound_nonanalytic_ell} and
	\eqref{eq:appli_MI}, 
	there exists $M_7>0$ such that 
	\begin{align}
		\label{eq:A_V_log_n}
		\left\|
		A^{-\alpha}
		\prod_{k=1}^n
		V_{\omega_k}(A)
		\right\| 
		&\leq 
		M_7\frac{\big(\log (n+1) \big)^{\alpha/\gamma}}{n^{\alpha/(2-\beta)}} 
	\end{align}
	for all $n \in \mathbb{N}$ and 
	$(\omega_k)_{k \in \mathbb{N}}\in \mathcal{S}(\omega_{\min},\omega_{\max})$.
	By \eqref{eq:delta_eps},
	\[
	\big(\log (n+1) \big)^{\alpha/\gamma} \leq 
	\big(\log (n+1) \big)^{2\alpha/(2-\beta) + \varepsilon} = L_{\alpha,\beta,\varepsilon}(n)
	\]
	for all $n \geq 2$.
	The desired estimate \eqref {eq:VA_bound_nonanalytic}
	in the case $\alpha \geq (2-\beta)/2$
	follows from \eqref{eq:A_V_log_n}.
\end{proof}

\begin{remark}
	\label{rem:other_a}
		In the proof of 
		Theorem~\ref{thm:VA_bound_nonanalytic},
		we used the 
		estimate \eqref{eq:fnaw_bound_nonanalytic}
		in the case $\alpha = 2	q$.
		When we apply \eqref{eq:fnaw_bound_nonanalytic}
		in the case $q<\alpha < 2 q$, the following 
		estimate is obtained by the same argument:
		There exists $M_1>0$ such that 
		\begin{equation}
			\label{eq:a_small}
			\left\|
			\left(
			\prod_{k=1}^n
			V_{\omega_k}(A)
			\right)
			A^{-1}
			\right\| 
			\leq \frac{M_1}{n^{(p-1)/(p-\beta)}}
		\end{equation}
		for all $n \in \mathbb{N}$ and 
		$(\omega_k)_{k \in \mathbb{N}} \in \mathcal{S}(\omega_{\min},\omega_{\max})$,
		where $p \coloneqq 
		\alpha /q \in (1,2)$.  
		If $\beta = 1$, then the upper bound for the decay rate given in 
		\eqref{eq:a_small}
		coincides with the one in
		Theorem~\ref{thm:CN_holomorphic} 
		for holomorphic $C_0$-semigroups.
		However, if $\beta \in (0,1)$, then
		\[
		\frac{1}{2-\beta} > \frac{p-1}{p-\beta}
		\]
		for all $p \in (1,2)$, and 
		the estimate \eqref{eq:a_small}
		is less sharp than the estimate 
		\eqref{eq:VA_bound_nonanalytic}.
		Similarly,  \eqref{eq:fnaw_bound_nonanalytic}
		in the case $\alpha > 2	q$ leads to the following estimate:
		There exists $M_2>0$ such that 
		\begin{equation}
			\label{eq:a_large}
			\left\|
			\left(
			\prod_{k=1}^n
			V_{\omega_k}(A)
			\right)
			A^{-1}
			\right\| 
			\leq \frac{M_2}{n^{p/(2p-2\beta)}}
		\end{equation}
		for all $n \in \mathbb{N}$ and 
		$(\omega_k)_{k \in \mathbb{N}} \in \mathcal{S}(\omega_{\min},\omega_{\max})$,
		where $p \coloneqq 
		\alpha /q >2$.  Since
		\[
		\frac{1}{2-\beta} > \frac{p}{2p-2\beta}
		\]
		for all $p > 2$ and $\beta \in (0,1]$,
		the estimate \eqref{eq:a_large}
		is also worse than the estimate 
		\eqref{eq:VA_bound_nonanalytic}
		for all $\beta \in (0,1]$.
\end{remark}

\subsection{Decay estimate for inverse generators}
Let $-A$ be the generator of a bounded $C_0$-semigroup
$(e^{-tA})_{t \geq 0}$ on a Banach space.
Assume that the algebraic inverse $A^{-1}$ exists and is densely defined.
The inverse generator problem, introduced in \cite{deLaubenfels1988}, asks whether
$-A^{-1}$ also generates a bounded 
$C_0$-semigroup.
The inverse generator $-A^{-1}$ naturally arises when applying the reciprocal transform in system theory; see,
e.g., \cite{Curtain2002} and \cite[Section~12.4]{Staffans2005}.
The $C_0$-semigroup $(e^{-tA^{-1}})_{t \geq 0}$, if it exists,
has numerous properties analogous to
the Cayley transform $V_\omega(A)$.
We review some existing studies
on the inverse generator problem in the next paragraph, and a more detailed discussion can be found in the survey \cite{Gomilko2017}.

The inverse generator problem has a negative answer
in the general Banach space setting, which was implicitly
given in \cite[pp.~343--344]{Komatsu1966}; see also \cite{Zwart2007,Gomilko2007MS, Fackler2016}.
However, positive results were given for
sectorially bounded holomorphic $C_0$-semigroups
on Banach spaces \cite{deLaubenfels1988} and for contraction $C_0$-semigroups
on Hilbert spaces \cite{Zwart2007}. 
It remains unknown whether the answer is positive or negative
for
bounded $C_0$-semigroups on Hilbert spaces.
The estimate 
$\|e^{-tA^{-1}}\| = O(\log t)$ as $t \to \infty$  
was first established for 
exponentially stable $C_0$-semigroups on Hilbert spaces in \cite{Zwart2007SF}. It was later extended to bounded $C_0$-semigroups $(e^{-tA})_{t \geq 0}$ on Hilbert spaces with $0 \in \varrho(A)$ in \cite{Batty2021}.
It was  shown in \cite{Wakaiki2024JEE} that 
the same class of $C_0$-semigroups satisfy
$\sup_{t \geq 0}\|e^{-tA^{-1}}A^{-\alpha}\| < \infty$
for all $\alpha >0$.
The generator $-A$ of a sectorially
bounded holomorphic 
$C_0$-semigroup
on a Banach space satisfies
\begin{equation}
	\label{eq:inv_estimate_holomorphic}
	\big\|A^{-\alpha}e^{-tA^{-1}}\big\| = 
	O\left( \frac{1}{t^{\alpha}} \right)\quad \text{as $t \to \infty$}
\end{equation}
for all $\alpha >0$ if $A^{-1}$ exists and is densely defined;
see, e.g., \cite[Corollary~10.1]{Batty2023}.
For an exponentially stable $C_0$-semigroup $(e^{-tA})_{t \geq 0}$
on a Banach space, 
\begin{equation}
	\label{eq:inv_estimate_exp_stable_Banach}
	\big\|e^{-tA^{-1}}A^{-m}\big\| =  O\left( \frac{1}{t^{m/2-1/4}}\right)\quad \text{as $t \to \infty$}
\end{equation}
holds,
where $m \in \mathbb{N}$. The estimate 
\eqref{eq:inv_estimate_exp_stable_Banach} 
with $m=1$ was initially
derived  in
\cite{Zwart2007} and subsequently generalized to all
$m \in \mathbb{N}$ in
\cite{deLaubenfels2009}.
In the Hilbert space setting,
the estimate \eqref{eq:inv_estimate_exp_stable_Banach} 
was improved to
\begin{equation}
	\label{eq:inv_estimate_exp_stable}
	\big\|e^{-tA^{-1}}A^{-\alpha}\big\| = O\left( \frac{1}{t^{\alpha/2}} \right)\quad \text{as $t \to \infty$}
\end{equation}
for $\alpha >0$ in
\cite{Wakaiki2023IEOT}.

We present a norm estimate for $(e^{-tA^{-1}})_{t\geq 0}$
when $(e^{-tA})_{t\geq 0}$ is 
an exponentially stable $C_0$-semigroup in the Crandall--Pazy class 
on a Hilbert space.
The following theorem is analogous to Theorem~\ref{thm:VA_bound_nonanalytic} and bridges the gap between
\eqref{eq:inv_estimate_holomorphic} and 
\eqref{eq:inv_estimate_exp_stable}.
\begin{theorem}
	\label{thm:CP_inv}
	Let $-A$ be the generator of 
	an exponentially stable $C_0$-semigroup $(e^{-tA})_{t \geq 0}$
	in the Crandall--Pazy class with parameter $\beta \in (0,1]$
	on a Hilbert space $H$.
	Then, for all $\alpha,\varepsilon>0$,
	\begin{equation}
		\label{eq:inv_bound_nonanalytic}
		\big\|
		e^{-tA^{-1}}
		A^{-\alpha}
		\big\| 
		= O
		\left( \frac{L_{\alpha,\beta, \varepsilon}(t)}{ t^{\alpha/(2-\beta)}}\right)
		\quad \text{as $t \to \infty$},
	\end{equation}
	where $L_{\alpha,\beta, \varepsilon}$ is as in \eqref{eq:Labe}.
\end{theorem}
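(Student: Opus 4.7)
\emph{Plan.} Adapt the strategy of Theorem~\ref{thm:VA_bound_nonanalytic} to the inverse-generator case by combining the $\mathcal{B}$-calculus with Proposition~\ref{prop:AbqfA_bound}. Fix $c_0>0$ with $\|e^{-tA}\|\leq Ke^{-c_0t}$ for all $t\geq 0$, choose $c\in(0,c_0)$, and set $B\coloneqq A-c$. Then $-B$ generates a bounded $C_0$-semigroup, $(e^{-tB})_{t\geq 0}$ remains in the Crandall--Pazy class with parameter $\beta$, and $B+c=A$ with $0\in\varrho(B)$. For $t,\alpha>0$, introduce
\begin{equation*}
f_{t,\alpha}(z)\coloneqq \frac{e^{-t/(z+c)}}{(z+c)^{\alpha}},\quad z\in\mathbb{C}_+.
\end{equation*}
Since $|e^{-t/(z+c)}|\leq 1$ and $|z+c|\geq c$ on $\overline{\mathbb{C}_+}$, a direct bound on $f_{t,\alpha}'$ places $f_{t,\alpha}$ in $\mathcal{B}_0$. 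Using the algebra-homomorphism property of the $\mathcal{B}$-calculus for $B$, applied to the factorization $f_{t,\alpha}=e_t\cdot w_{\alpha,c}$ with $e_t(z)\coloneqq e^{-t/(z+c)}$ and $w_{\alpha,c}$ from Example~\ref{ex:r_alpha_Ds}, together with the convergent series $e^{-tA^{-1}}=\sum_{n\geq 0}(-t)^nA^{-n}/n!$, one identifies $f_{t,\alpha}(B)=e^{-tA^{-1}}A^{-\alpha}$.

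The central technical step is the analog of Proposition~\ref{prop:fnaw_bound_nonanalytic} in the case $\alpha=2q$, namely
\begin{equation*}
\vertiii{f_{t,2q}}=O\bigl(t^{-q}\log(t+1)\bigr)\quad\text{as }t\to\infty,
\end{equation*}
for each $q\in(0,1/2)$. Starting from the pointwise bound
\begin{equation*}
|f_{t,\alpha}'(\xi+i\eta)|\leq e^{-t(\xi+c)/|z+c|^2}\biggl[\frac{t}{|z+c|^{\alpha+2}}+\frac{\alpha}{|z+c|^{\alpha+1}}\biggr],
\end{equation*}
one maximizes in $v=|z+c|^2\geq(\xi+c)^2$: an interior critical point $v^{\ast}\sim t(\xi+c)$ exists for $\xi+c\lesssim t$, yielding an interior bound $O(t^{-\alpha/2}(\xi+c)^{-(\alpha+2)/2}+t^{-(\alpha+1)/2}(\xi+c)^{-(\alpha+1)/2})$, while the boundary regime $\xi+c\gtrsim t$ produces exponentially small terms. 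Integrating against $\varphi_q$ with $\alpha=2q$, the dominant contribution reduces to the borderline integral $\int_1^{O(t)}\xi^{-1}d\xi=O(\log t)$, giving the claimed estimate.

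Proposition~\ref{prop:AbqfA_bound} applied to $B$ then yields $\|B^{\beta q}e^{-tA^{-1}}A^{-2q}\|=O(t^{-q}\log(t+1))$. Since $A=B+c$ with $0\in\varrho(A)\cap\varrho(B)$, the sectorial calculus applied to $w\mapsto((w+c)/w)^{\beta q}$ gives $A^{\beta q}B^{-\beta q}\in\mathcal{L}(H)$, hence $\|e^{-tA^{-1}}A^{-(2-\beta)q}\|=O(t^{-q}\log(t+1))$; setting $\alpha=(2-\beta)q$ proves \eqref{eq:inv_bound_nonanalytic} for $0<\alpha<(2-\beta)/2$. For $\alpha\geq(2-\beta)/2$, decompose $\alpha=\ell\gamma+\vartheta$ with $\gamma,\vartheta<(2-\beta)/2$, factor $e^{-tA^{-1}}A^{-\alpha}=(e^{-t_0A^{-1}}A^{-\vartheta})\prod_{j=1}^{\ell}(e^{-t_jA^{-1}}A^{-\gamma})$ with $t_j\sim t/(\ell+1)$ via the semigroup law, apply the small-$\alpha$ bound to each factor, and convert the resulting integer exponent $\ell+1$ of $\log t$ into $2\alpha/(2-\beta)+\varepsilon$ by invoking the moment inequality (Lemma~\ref{lem:moment_ineq}), in direct analogy with the closing argument of Theorem~\ref{thm:VA_bound_nonanalytic}. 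The hardest point is the borderline analysis of $\vertiii{f_{t,2q}}$: the two-regime optimization in $\xi$ must be executed carefully enough to isolate the $\log t$ factor that arises precisely at $\alpha=2q$; a more routine but nontrivial secondary point is the identification $f_{t,\alpha}(B)=e^{-tA^{-1}}A^{-\alpha}$.
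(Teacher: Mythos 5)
Your proposal follows essentially the same route as the paper's proof: the same function $f_{t,\alpha}(z)=e^{-t/(z+c)}(z+c)^{-\alpha}$ (the paper normalizes to $c=1$ by rescaling and cites \cite{Batty2021} for the identification $f_{t,\alpha}(B)=e^{-tA^{-1}}A^{-\alpha}$, which your power-series argument reproduces), the same borderline estimate $\vertiii{f_{t,2q}}=O(t^{-q}\log t)$ via the same two-regime optimization in $|z+c|^2$ (this is exactly the paper's Proposition~\ref{prop:fta_norm}), Proposition~\ref{prop:AbqfA_bound} applied to the shifted generator, and the same splitting-plus-moment-inequality argument for $\alpha\geq(2-\beta)/2$. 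One minor imprecision: in the regime $\xi+c\gtrsim t$ the factor $e^{-t/(\xi+c)}$ is not exponentially small, but the polynomial decay of the remaining integrand already gives the required $O(t^{-q})$ contribution, just as in the paper.
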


Theorem~\ref{thm:CP_inv} is also proved via
the $\mathcal{B}$-calculus. 
Let $t \geq 0$ and $\alpha > 0$. Define
\begin{equation}
	\label{eq:fta_def}
	f_{t,\alpha}(z) \coloneqq \frac{e^{-t/(z+1)}}{(z+1)^\alpha},\quad z \in \mathbb{C}_+.
\end{equation}
Then $f_{t,\alpha} \in \mathcal{B}$ for all $t,\alpha >0$, and 
$\|f_{t,\alpha}\|_{\mathcal{B}_0} = O(t^{-\alpha/2})$ as $t \to \infty$ 
for 
all $\alpha >0$; see \cite[Lemma~2.3]{Wakaiki2023IEOT}.
Here we use the following estimate for $\vertiii{f_{t,\alpha}}$ 
with $\alpha > q > 0$.

\begin{proposition}
	\label{prop:fta_norm}
	Let $\alpha > q > 0$. Then  
	the function $f_{t,\alpha}$ defined by \eqref{eq:fta_def}
	satisfies
	$\sup_{t \geq 0}
	\vertiii{f_{t,\alpha}} < \infty$ and
	\begin{align}
		\label{eq:f_t_B0_norm}
		\vertiii{f_{t,\alpha}} 
		= 		\begin{cases}
			O\left(\dfrac{1}{t^{\alpha-q}}\right), & q<\alpha < 2q, \vspace{6pt}\\
			O\left(\dfrac{\log t}{t^{q}}\right), & \alpha = 2q, \vspace{6pt}\\
			O\left(\dfrac{1}{t^{\alpha/2}}\right), & \alpha >2q
		\end{cases}
	\end{align}
	as $t \to \infty$.
\end{proposition}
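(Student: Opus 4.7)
The plan is to differentiate $f_{t,\alpha}$ directly, reduce the computation of $\sup_{\eta\in\mathbb{R}}|f_{t,\alpha}'(\xi+i\eta)|$ to a one-variable maximization, and then split the integral $\int_0^\infty \varphi_q(\xi)\sup_\eta|f_{t,\alpha}'(\xi+i\eta)|\,d\xi$ into a small- and large-$\xi$ piece at a cut-off proportional to $t$. Differentiation gives
\[
f_{t,\alpha}'(z) = \frac{t\,e^{-t/(z+1)}}{(z+1)^{\alpha+2}} - \frac{\alpha\,e^{-t/(z+1)}}{(z+1)^{\alpha+1}},
\]
and writing $w=\xi+1+i\eta$ and $r=|w|\geq\xi+1$, one has $\re(t/w)=t(\xi+1)/r^2$, whence
\[
|f_{t,\alpha}'(\xi+i\eta)| \leq \frac{t}{r^{\alpha+2}}e^{-t(\xi+1)/r^2} + \frac{\alpha}{r^{\alpha+1}}e^{-t(\xi+1)/r^2}.
\]
The key elementary observation is that for each $\nu>0$, the function $G_\nu(r)\coloneqq r^{-\nu}e^{-t(\xi+1)/r^2}$ has a unique critical point on $(0,\infty)$ at $r_*=\sqrt{2t(\xi+1)/\nu}$, with value of order $(t(\xi+1))^{-\nu/2}$.

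This produces two complementary bounds for $\sup_\eta|f_{t,\alpha}'|$. A uniform (small-$\xi$) bound from the global maximum,
\[
\sup_\eta|f_{t,\alpha}'(\xi+i\eta)| \leq \frac{C_1}{t^{\alpha/2}(\xi+1)^{(\alpha+2)/2}} + \frac{C_2}{t^{(\alpha+1)/2}(\xi+1)^{(\alpha+1)/2}},
\]
and a sharper (large-$\xi$) bound valid once $\xi+1\geq 2t/(\alpha+1)$, where $G_{\alpha+2}$ and $G_{\alpha+1}$ are decreasing on $[\xi+1,\infty)$:
\[
\sup_\eta|f_{t,\alpha}'(\xi+i\eta)| \leq \left(\frac{t}{(\xi+1)^{\alpha+2}} + \frac{\alpha}{(\xi+1)^{\alpha+1}}\right)e^{-t/(\xi+1)}.
\]
I would split at the cut-off $R_t\coloneqq 2t/(\alpha+1)$, using the first bound on $[0,R_t]$ and (discarding the exponential factor) the second on $[R_t,\infty)$. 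The upper piece yields
\[
\int_{R_t}^\infty \xi^q\!\left(\frac{t}{\xi^{\alpha+2}}+\frac{\alpha}{\xi^{\alpha+1}}\right) d\xi = O\!\left(\frac{1}{t^{\alpha-q}}\right),
\]
using only $\alpha>q$. The $[0,1]$ contribution to the lower piece is $O(t^{-\alpha/2})$, so the decisive computation is
\[
\frac{C_1}{t^{\alpha/2}}\int_1^{R_t}\xi^{q-(\alpha+2)/2}\,d\xi + \frac{C_2}{t^{(\alpha+1)/2}}\int_1^{R_t}\xi^{q-(\alpha+1)/2}\,d\xi.
\]

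The three cases in \eqref{eq:f_t_B0_norm} are now forced by the behaviour of $\int\xi^{q-(\alpha+2)/2}d\xi$: when $\alpha>2q$ the first integrand is integrable at $\infty$, producing $O(t^{-\alpha/2})$; when $\alpha=2q$ it yields $\log R_t$, producing $O(t^{-q}\log t)$; and when $q<\alpha<2q$ evaluation at $R_t\sim t$ yields $O(t^{-(\alpha-q)})$. A case-by-case comparison confirms that the $C_2$-term and the upper-piece bound $O(t^{-(\alpha-q)})$ are dominated in each regime (because $(\alpha+1)/2>\alpha/2$, and $\alpha-q\geq\alpha/2$ iff $\alpha\geq 2q$). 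Uniform boundedness on any compact $t$-range is immediate from the crude estimate $|f_{t,\alpha}'(\xi+i\eta)|\leq t(\xi+1)^{-(\alpha+2)}+\alpha(\xi+1)^{-(\alpha+1)}$, which is integrable against $\varphi_q$ thanks to $\alpha>q$. The main obstacle is purely bookkeeping: selecting the correct cut-off so that the two monotonicity regimes of $G_\nu$ line up, and tracking which of several competing contributions dominates in each of the three ranges of $\alpha$.
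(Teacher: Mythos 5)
Your proposal is correct and follows essentially the same route as the paper's proof: both differentiate $f_{t,\alpha}$, reduce $\sup_{\eta}|f_{t,\alpha}'(\xi+i\eta)|$ to a one-variable maximization of $|w|^{-\nu}e^{-t(\xi+1)/|w|^2}$ over $|w|\geq\xi+1$ (the paper parametrizes by $s=\eta^2$ rather than $r=|w|$, which is the same computation), locate the critical point at $|w|^2=2t(\xi+1)/\nu$, and split the $\xi$-integral at a cut-off proportional to $t$ to obtain the three regimes. The only differences are organizational (the paper treats the two terms via a generic auxiliary functional $F_\beta$ with separate cut-offs $2t/\beta$, while you use a single common cut-off), and your case-by-case domination checks are accurate.
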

\begin{proof}
	Let $\alpha > q > 0$  and $\beta > q + 1$. Define 
	$F_{\beta}$ by
	\[
	F_\beta (t) \coloneqq 
	\int^{\infty}_0 \varphi_q(\xi) \sup_{\eta \in \mathbb{R}}
	\left|\frac{e^{-t/(\xi+i\eta+1)}}{(\xi+i\eta+1)^\beta}\right| d\xi,
	\quad t \geq 0,
	\]
	where $\varphi_q$ is as in \eqref{eq:psi_def}.
	Since
	\[
	f_{t,\alpha}'(z) = \frac{te^{-t/(z+1)} }{(z+1)^{\alpha+2}} - \frac{\alpha e^{-t/(z+1)} }{(z+1)^{\alpha+1}}
	\]
	for all $z \in \mathbb{C}_+$,
	we obtain
	\begin{equation}
		\label{eq:ftk_bound}
		\vertiii{f_{t,\alpha}} 
		\leq tF_{\alpha+2}(t) + \alpha F_{\alpha+1}(t)
	\end{equation}
	for all $t\geq  0$.
	
	We will prove that 
	\begin{equation}
		\label{eq:F_b_bound}
		\sup_{t \geq 0}F_\beta (t) < \infty
	\end{equation}
	and 
	\begin{equation}
		\label{eq:Fk_order}
		F_\beta (t) =
		\begin{cases}
			O\left(\dfrac{1}{t^{\beta-q-1}}\right), & q+1<\beta < 2q+2, \vspace{6pt}\\
			O\left(\dfrac{\log t}{t^{q+1}}\right), & \beta = 2q+2, \vspace{6pt}\\
			O\left(\dfrac{1}{t^{\beta/2}}\right), & \beta  >2q+2
		\end{cases}
	\end{equation}
	as $t \to \infty$.
	From \eqref{eq:Fk_order}, it follows that 
	\begin{align*}
		tF_{\alpha+2}(t) + \alpha F_{\alpha+1}(t) =
		\begin{cases}
			O\left(\dfrac{1}{t^{\alpha-q}}\right), & q<\alpha < 2q, \vspace{6pt}\\
			O\left(\dfrac{\log t}{t^{q}}\right), & \alpha = 2q, \vspace{6pt}\\
			O\left(\dfrac{1}{t^{\alpha/2}}\right), & \alpha >2q
		\end{cases}
	\end{align*}
	as $t \to \infty$.
	Hence, we obtain $\sup_{t \geq 0}\vertiii{f_{t,\alpha}} < \infty$ 
	and the estimate \eqref{eq:f_t_B0_norm}.

	To prove the estimates  \eqref{eq:F_b_bound} 
	and \eqref{eq:Fk_order}, we
	define 
	\[
	g_{t,\zeta ,\beta}(s) \coloneqq \frac{e^{-t\zeta /(\zeta ^2+s)}}{(\zeta ^2+s)^{\beta/2}}
	\]
	for $t>0$, $\zeta  >1$, and $s \geq 0$.
	Then, letting $\zeta = \xi+1$ and $s = \eta^2$, we obtain
	\begin{equation}
		\label{eq:Fbeta_rep}
		F_{\beta}(t)
		=
		\int^{\infty}_1 \varphi_q (\zeta -1)\sup_{s \geq 0} g_{t,\zeta ,\beta}(s)d\zeta  
	\end{equation}
	for all $t \geq  0$.
	A routine calculation shows that 
	\begin{equation}
		\label{eq:gtzb_bound}
		\sup_{s \geq 0} g_{t,\zeta ,\beta}(s) = 
		\begin{cases}
			g_{t,\zeta ,\beta}(0) = \dfrac{e^{-t/\zeta }}{\zeta ^\beta}
			\leq \dfrac{1}{\zeta^{\beta}}, & 0 \leq t \leq \dfrac{\beta \zeta }{2}, \vspace{6pt}\\
			g_{t,\zeta ,\beta}\left(\dfrac{2t\zeta }{\beta}-\zeta ^2 \right) = \dfrac{c}{(t\zeta )^{\beta/2}}, &  t \geq \dfrac{\beta \zeta }{2} > \dfrac{\beta}{2}
		\end{cases}
	\end{equation}
	for all $\zeta  > 1$,
	where $c \coloneqq  \left(\beta/(2e)\right)^{\beta/2}$.
	To prove \eqref{eq:F_b_bound}, we consider 
	the three cases $0 \leq  t \leq \beta/2$,
	$\beta/2 <  t \leq \beta$, and $t >\beta$.
	In the last case $t > \beta$, we also prove \eqref{eq:Fk_order}.
	
	If $0 \leq  t \leq \beta/2$, then \eqref{eq:gtzb_bound} yields
	\begin{align*}
		\int^{\infty}_1 \varphi_q (\zeta -1) \sup_{s \geq 0}  g_{t,\zeta ,\beta}(s)d\zeta  &\leq
		\int^{\infty}_1 \frac{\varphi_q(\zeta -1) }{\zeta ^\beta}d\zeta  \\
		&\leq
		\int^{2}_1 \frac{1}{\zeta ^\beta}d\zeta  + \int^{\infty}_2 \frac{1}{\zeta ^{\beta-q}}d\zeta \\
		&\leq 
		1 + \frac{1}{(\beta -q-1)2^{\beta-q-1}} \eqqcolon M_1.
	\end{align*}
	Combining this with \eqref{eq:Fbeta_rep},
	we obtain $F_{\beta}(t) \leq M_1$ for all $t \in [0,\beta/2]$.
	
	When $t > \beta /2$, we have from \eqref{eq:gtzb_bound}  that
	\begin{align}
		\label{eq:phi_g_bound}
		\int^{\infty}_1 \varphi_q (\zeta -1) \sup_{s \geq 0} g_{t,\zeta ,\beta}(s)d\zeta  &\leq 
		\frac{c}{t^{\beta /2}} \int^{2t/\beta}_{1} \frac{\varphi_q (\zeta -1) }{\zeta ^{\beta/2}}d\zeta  + 
		\int^{\infty}_{2t/\beta }  \frac{\varphi_q (\zeta -1)}{\zeta ^\beta } d\zeta .
	\end{align}
	If $\beta/2 < t \leq \beta$, then $1 < 2t/\beta \leq 2$.
	Hence,
	\[
	\int^{2t/\beta}_{1} \frac{\varphi_q (\zeta -1) }{\zeta ^{\beta/2}}d\zeta  =
	\int^{2t/\beta}_{1} \frac{1}{\zeta ^{\beta/2}}d\zeta \leq 1
	\]
	and
	\[
	\int^{\infty}_{2t/\beta }  \frac{\varphi_q (\zeta -1)}{\zeta ^\beta } d\zeta  \leq
	\int^{2}_{2t/\beta } \frac{1}{\zeta ^\beta } d\zeta  + 
	\int^{\infty}_{2 } \frac{1}{\zeta ^{\beta-q} } d\zeta  \leq M_1.
	\]
	From \eqref{eq:Fbeta_rep} and \eqref{eq:phi_g_bound}, it follows that 
	\[
	F_{\beta}(t) \leq 
	\frac{c}{t^{\beta /2}} + M_1
	\]
	for all $t \in (\beta/2,\beta]$.
	
	Finally, we consider the case $t > \beta$. Since $2t/\beta > 2$,
	we have that 
	\begin{align*}
		\int^{2t/\beta}_{1} \frac{\varphi_q (\zeta -1) }{\zeta ^{\beta/2}}d\zeta  \leq 1   + 
		\int^{2t/\beta}_2\frac{1}{\zeta ^{\beta/2-q}}d\zeta .
	\end{align*}
	The second term of the right-hand side satisfies
	\[
	\int^{2t/\beta}_2\frac{1}{\zeta ^{\beta/2-q}}d\zeta  =
	\begin{cases}
		\log t - \log \beta,& \beta = 2q+2, \\
		\dfrac{2}{\beta - 2q - 2}
		\left(
		\left(
		\dfrac{1}{2}
		\right)^{\beta/2-q-1} - 
		\left(
		\dfrac{\beta}{2t}
		\right)^{\beta/2-q-1}
		\right),& \beta \not= 2q+2.
	\end{cases}
	\]
	Moreover,
	\begin{align*}
		\int^{\infty}_{2t/\beta } \frac{\varphi_q (\zeta -1) }{\zeta ^\beta } d\zeta  \leq 
		\int^{\infty}_{2t/\beta } \frac{1}{\zeta ^{\beta-q} } d\zeta  
		= \frac{1}{\beta -q-1} \left(
		\dfrac{\beta}{2t}
		\right)^{\beta-q-1}.
	\end{align*}
	Using \eqref{eq:Fbeta_rep} and \eqref{eq:phi_g_bound},
	we obtain $\sup_{t > \beta }F_{\beta}(t) < \infty$ and 
	\eqref{eq:Fk_order}.
\end{proof}

We are now in a position to prove Theorem~\ref{thm:CP_inv}.
In the proof, 
the function $f_{t,\alpha}$ defined by \eqref{eq:fta_def} is decomposed as
$
f_{t,\alpha} = h_t w_{\alpha},
$
where
\begin{equation}
	\label{eq:h_r_def}
	h_t(z) \coloneqq e^{-t/(z+1)}\quad \text{and} \quad  
	w_\alpha(z) \coloneqq \frac{1}{(z+1)^{\alpha}},\quad z \in \mathbb{C}_+.
\end{equation}
We have that 
$h_t\in \mathcal{LM}$,
as shown in \cite[Example 2.12]{Batty2021}.
Moreover,
$w_{\alpha} \in \mathcal{LM}$; see Example~\ref{ex:r_alpha_Ds}.

\begin{proof}[Proof of Theorem~\ref{thm:CP_inv}.]
	There exist $K \geq 1$ and $c >0$ such that 
	$
	\|e^{-tA} \| \leq Ke^{-c t}
	$
	for all $t \geq 0$.
	We may assume that $c = 2$ by replacing $A$ with $(2/c)A$
	and $t$ with $(c/2) t$. 
	Define $B\coloneqq A-1$. Then 
	$-B$ also generates an exponentially stable
	$C_0$-semigroup  in 
	the Crandall--Pazy class with parameter $\beta$ on $H$.

	Let $t\geq 0$ and $0 < q < 1/2$.
	Define $h_t$ and $w_{2q}$ by \eqref{eq:h_r_def}.
	Then
	$
	h_t(B)= e^{-tA^{-1}}
	$
	as shown in the proof of \cite[Corollary 5.7]{Batty2021},
	and $w_{2q}(B) = A^{-2q}$.
	Hence,
	the function $f_{t,2q}$ defined by \eqref{eq:fta_def} 
	satisfies
	\begin{equation}
		\label{eq:f_t_2q_fc}
		f_{t,2q}(B)  = h_t(B)w_{2q}(B) = e^{-tA^{-1}}A^{-2q}.
	\end{equation}
	Proposition~\ref{prop:fta_norm} yields
	$\sup_{t \geq 0} \vertiii{f_{t,2q}} < \infty$
	and 
	\begin{equation}
		\label{eq:inv_A_bound1}
		\vertiii{f_{t,2q}} = O
		\left(
		\frac{\log t}{t^q}
		\right)
	\end{equation}
	as $t\to \infty$.
	By Proposition~\ref{prop:AbqfA_bound},
	$B^{\beta q}f_{t,2q}(B)\in \mathcal{L}(X)$, and there exists $M_1>0$ such that 
	\begin{equation}
		\label{eq:inv_A_bound2}
		\|B^{\beta q} f_{t,2q}(B) \| \leq M_1 \vertiii{f_{t,2q}}.
	\end{equation}
	Since $A^{\beta q}B^{-\beta q} \in \mathcal{L}(H)$,
	it follows that $M_2 \coloneqq \|A^{\beta q}B^{-\beta q}\| < \infty$.
	We have from \eqref{eq:f_t_2q_fc} that
	\begin{equation}
		\label{eq:inv_A_bound3}
		\big\|e^{-tA^{-1}}A^{-(2 - \beta) q}\big\| 
		\leq M_2\|B^{\beta q}f_{t,2q}(B) \|.	
	\end{equation}
	Combining the estimates 
	\eqref{eq:inv_A_bound1}--\eqref{eq:inv_A_bound3},
	we obtain
	\begin{equation}
		\label{eq:inv_A_bound_log_t}
		\big\|e^{-tA^{-1}}A^{-(2-\beta)q}\big\| =
		O
		\left( \frac{\log t}{ t^q}\right)
	\end{equation}
	as $t \to \infty$. 
	
	Let $0 < \alpha <(2-\beta)/2$, and set $q \coloneqq \alpha/(2-
	\beta)$.
	Then $0 < q < 1/2$ and $(2-\beta)q = \alpha$.
	From the definition \eqref{eq:Labe} of $L_{\alpha,\beta,\varepsilon}$,
	we also have that 
	\[
	\frac{\log (t+1)}{t^q} = \frac{
		L_{\alpha,\beta,\varepsilon}(t)}{t^{\alpha/(2-\beta)}}
	\]
	for all $t >0$.
	Therefore, \eqref{eq:inv_A_bound_log_t} yields the desired 
	estimate \eqref{eq:inv_bound_nonanalytic}
	in the case $0< \alpha <  (2-\beta)/2$.
	The case $\alpha \geq   (2-\beta)/2$ 
	follows as in the proof of Theorem \ref{thm:VA_bound_nonanalytic}.
\end{proof}

\begin{remark}
		To prove Theorem~\ref{thm:CP_inv},
		we used the 
		estimate \eqref{eq:f_t_B0_norm}
		in the case $\alpha = 2	q$.
		The case $\alpha \neq 2q$ does not lead to
		sharper decay estimates for inverse generators, as in Remark~\ref{rem:other_a} for
		the Crank--Nicolson scheme.
\end{remark}

\subsection{Example}
\label{sec:example}
The following simple example indicates that 
the decay rate $L_{\alpha,\beta, \varepsilon}(\tau)\tau^{-\alpha/(2-\beta)}$ given in Theorems~\ref{thm:VA_bound_nonanalytic}
and \ref{thm:CP_inv} cannot in general be replaced by any better rates than
$\tau^{-\alpha/(2-\beta)}$.

Let $\gamma \geq 1$ 
and set $\lambda_k \coloneqq k + i k^{\gamma}$
for $k \in \mathbb{N}$.
Define an operator $A$ on $\ell^2(\mathbb{N})$ by
\[
(Ax)_{k \in \mathbb{N}} \coloneqq \big((\lambda_k+1) x_k\big)_{k \in \mathbb{N}},\quad 
x = (x_k)_{k \in \mathbb{N}}
\]
with domain 
\[
D(A) \coloneqq 
\big\{
(x_k
)_{k \in \mathbb{N}} \in \ell^2(\mathbb{N}):
\big( (\lambda_k+1) x_k\big)_{k \in \mathbb{N}} \in \ell^2(\mathbb{N})
\big\}.
\]
Then $-A$ generates an exponentially stable $C_0$-semigroup
$(e^{-tA})_{t \geq 0}$
on $\ell^2(\mathbb{N})$.
Moreover, $(e^{-tA})_{t \geq 0}$ is in the Crandall--Pazy class with parameter $\beta \coloneqq 
1/\gamma \in (0,1]$,
but not with any larger parameter.
Indeed, let $k \in \mathbb{N}$ and $t>0$.
We have
\[
|(\lambda_k+1)e^{-(\lambda_k+1)t}| =
\sqrt{(k+1)^2 + k^{2\gamma}} e^{-(k+1)t} \leq 
\sqrt{5} k^{\gamma}e^{-kt}.
\]
Define $f_t(s) \coloneqq s^{\gamma} e^{-st}$ for $s \geq 0$.
Then
\[
\sup_{s \geq 0} f_t(s) = f_t\left(
\frac{\gamma}{t}
\right) = \left(
\frac{\gamma}{et}
\right)^{\gamma} .
\]
Therefore, $e^{-tA} H \subset D(A)$, and furthermore,
\[
\|Ae^{-tA}\| =\sup_{k \in \mathbb{N}} |(\lambda_k+1)e^{-(\lambda_k+1)t}|
\leq \sqrt{5} \left(
\frac{\gamma}{et}
\right)^{\gamma},
\]
which implies that $(e^{-tA})_{t \geq 0}$ is in the Crandall--Pazy class with parameter $\beta=1/\gamma$.
To show that 
the parameter $\beta$ cannot be replaced with a larger constant,
let $n \in \mathbb{N}$ and $t = 1/n$. Then
\[
\sup_{k \in \mathbb{N}} \sqrt{(k+1)^2 + k^{2\gamma}} e^{-(k+1)t}
\geq \sqrt{(n+1)^2 + n^{2\gamma}} e^{-(n+1)/n} \geq  e^{-(n+1)/n} t^{-\gamma}.
\]
Hence,
\[
\limsup_{t \to 0+} t^{1/\beta}\|Ae^{-tA}\| \geq e^{-1}.
\]
This implies that 
$(e^{-tA})_{t \geq 0}$ is not in the Crandall--Pazy class with a parameter larger than $\beta$. 

\subsubsection{Crank--Nicolson scheme}
Let $\alpha \geq 0$, and assume that $2\gamma-1 \in \mathbb{N}$.
For all $n \in \mathbb{N}_0$,
\begin{equation}
	\label{eq:V_A_example}
	\|V_1(A)^n(1+A)^{-\alpha}\| =
	\sup_{k \in \mathbb{N}} 
	\left| \frac{\lambda_k}{\lambda_k+2 } \right|^n
	\,
	\frac{1}{|\lambda_k+2|^{\alpha}}.
\end{equation}
For each $k \in \mathbb{N}$,
\[
\left| \frac{\lambda_k}{\lambda_k+2 } \right| =
\left(
1 - \frac{4(k+1)}{(k+2)^2+k^{2\gamma}}
\right)^{1/2} =
\left(
1 - \frac{4(1+1/k)}{(k+2)^2/k+k^{2\gamma-1}}
\right)^{1/2}.
\]
Letting $m \coloneqq 2\gamma - 1 \in \mathbb{N}$,
we have that for all $n \in \mathbb{N}$,
\[
\sup_{k \in \mathbb{N}} 
\left| \frac{\lambda_k}{\lambda_k+2 } \right|^{n^m}
\,
\frac{1}{|\lambda_k+2|^{\alpha}} \geq 
\left(
1 - \frac{4(1+1/n)}{(n+2)^2/n+n^{m}}
\right)^{n^m/2} \,
\frac{1}{\big((n+2)^2+n^{2\gamma} \big)^{\alpha/2}}.
\]
Since
\[
\lim_{n \to \infty}
\left(
1 - \frac{4(1+1/n)}{(n+2)^2/n+n^{m}}
\right)^{n^m/2} =
\begin{cases}
	e^{-1}, & m=1, \\
	e^{-2}, & m \geq 2,
\end{cases}
\]
there exists $M>0$  such that
\begin{equation}
	\label{eq:sup_lambda_example}
	\sup_{k \in \mathbb{N}} 
	\left| \frac{\lambda_k}{\lambda_k+2 } \right|^{n^m}
	\,
	\frac{1}{|\lambda_k+2|^{\alpha}} \geq 
	\frac{M}{n^{\alpha \gamma}}
\end{equation}
for all $n \in \mathbb{N}$. 
If $\alpha = \ell(2 - 1/\gamma)$ for some $\ell \in \mathbb{N}_0$,
then
we see from 
\eqref{eq:V_A_example} and \eqref{eq:sup_lambda_example} 
that 
\[
\big\|V_1(A)^{n^m}(1+A)^{-\alpha}\big\| \geq 
\frac{M}{(n^{ m})^{\ell}}
\]
for all $n \in \mathbb{N}$.
This yields
\[
\limsup_{n \to \infty} n^\ell
\big\|V_1(A)^{n}(1+A)^{-\alpha}\big\| \geq M.
\]
Thus, if $\beta = 1/\gamma = 
2/(m+1)$ for some $m\in \mathbb{N}$  and
if $\alpha = \ell (2-\beta)$ for some $\ell \in
\mathbb{N}_0$, then
\[
\limsup_{n \to \infty} n^{\alpha/(2-\beta)}
\|V_1(A)^{n}A^{-\alpha}\| \geq \frac{M}{\| A^{\alpha}(1+A)^{-\alpha}\|}.
\]

\subsubsection{Inverse generator}
Let $\alpha \geq 0$. 
For all $t >0$, 
\[
\big\|e^{-tA^{-1}} A^{-\alpha} \big\| 
=\sup_{k \in \mathbb{N}} 
\frac{\big|e^{-t/(\lambda_k+1)}\big|}{|\lambda_k+1|^{\alpha}}
= \sup_{k \in \mathbb{N}}
\frac{e^{-(k+1)t/((k+1)^2+k^{2\gamma})}}{\big((k+1)^2+k^{2\gamma}\big)^{\alpha/2}}.
\]
Since
\[
e^{-(k+1)t/((k+1)^2+k^{2\gamma})} \geq e^{-2t/k^{2\gamma - 1}}\quad \text{and} \quad 
\frac{1}{(k+1)^2+k^{2\gamma}}
\geq \frac{1}{5 k^{2\gamma}}
\]
for all $k \in \mathbb{N}$,
we have that 
\[
\big\|e^{-tA^{-1}} A^{-\alpha} \big\| 
\geq 
\frac{1}{5^{\alpha/2}}
\sup_{k \in \mathbb{N}}
\frac{e^{-2t/k^{2\gamma - 1}}}{ k^{\alpha \gamma }}.
\]
Define 
\[
g_t(s) \coloneqq \frac{e^{-2t/s^{2\gamma - 1}}}{s^{\alpha \gamma }}
\]
for $s \geq 1$ and $t >0$. 
If $t \geq \alpha \gamma / (4\gamma - 2)$, then
\[
\sup_{s \geq 1} g_t(s) = 
g_t\left(\left(
\frac{(4\gamma -2)t}{\alpha \gamma}
\right)^{1/(2\gamma - 1)} \right) =
\left( 
\frac{\alpha \gamma}{e(4\gamma - 2)t}
\right)^{\alpha \gamma / (2\gamma - 1)}.
\]
Therefore, if we define 
$t_k \coloneqq \alpha \gamma k / (4\gamma - 2)$ for
$k \in \mathbb{N}$, then
\[
\big\|e^{-t_k A^{-1}} A^{-\alpha} \big\|  \geq \frac{1}{5^{\alpha/2}}
\left( 
\frac{\alpha \gamma}{e(4\gamma - 2)t_k}
\right)^{\alpha\gamma  / (2\gamma- 1)}.
\]
Thus,
\[
\limsup_{t \to \infty}	t^{\alpha/ (2 - \beta)} \big\|e^{-t A^{-1}} A^{-\alpha} \big\| \geq 
\frac{1}{5^{\alpha/2}}
\left( 
\frac{\alpha \gamma}{e(4\gamma - 2)}
\right)^{\alpha / (2 - \beta)},
\]
where $\beta = 1/\gamma$.

\section{Decay estimates when $-A^{-1}$
	generates a bounded semigroup}
\label{sec:Decay_estimate2}
The purpose of this section is to improve the estimates 
obtained in Theorems~\ref{thm:VA_bound_nonanalytic} and
\ref{thm:CP_inv}.
We show that the logarithmic term $L_{\alpha,\beta, \varepsilon}$ can be removed
under the assumption that $-A^{-1}$ generates a bounded $C_0$-semigroup.
We also present some related decay estimates 
obtained by the same argument
and show how certain decay estimates for $\|V_1(A)A^{-1}\|$
as $n \to \infty$
can be transferred to growth estimates for $\|Ae^{-tA}\|$
as $t \to 0+$.
\subsection{Decay estimates for Crank--Nicolson schemes and 
	inverse generators}	
The following theorems give improved estimates for 
rates of decay under the assumption on the semigroup-generation property
of $-A^{-1}$.
\begin{theorem}
	\label{thm:CN_CP_Ainv}
	Let $-A$ be the generator of 
	an exponentially stable $C_0$-semigroup $(e^{-tA})_{t \geq 0}$
	in the Crandall--Pazy class with parameter $\beta \in (0,1]$
	on a Hilbert space $H$.
	Assume that $-A^{-1}$ generates
	a  bounded $C_0$-semigroup 
	$(e^{-tA^{-1}})_{t \geq 0}$
	on  $H$.
	Then, for each $\alpha\geq 0$ and 
	$0< \omega_{\min} \leq \omega_{\max} < \infty$,
	there exists a constant $M>0$ such that
	\begin{equation}
		\label{eq:CN_estimate_Ainv}
		\left\|
		\left(
		\prod_{k=1}^n V_{\omega_k}(A) \right)
		A^{-\alpha}
		\right\| \leq 
		\frac{M}{n^{\alpha/(2-\beta)}}
	\end{equation}
	for all $n \in \mathbb{N}$ and $(\omega_k)_{k=1}^{\infty}
	\in \mathcal{S}(\omega_{\min},\omega_{\max})$.
\end{theorem}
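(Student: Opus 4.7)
The plan is to combine the resolvent characterization from Theorem~\ref{thm:decay_growth} (specifically the implication (i)$\Rightarrow$(ii), which forces the $\beta$-regularizing bound $\sup_{\omega>0}\|(A+c)^{\beta}(\omega+A)^{-1}\|<\infty$ derived en route to~\eqref{eq:Bbeta_resol_bound}) with the Lyapunov-equation technique from \cite{Piskarev2007} that the joint hypothesis on $-A$ and $-A^{-1}$ activates. The latter supplies uniform boundedness of the products $T_n\coloneqq \prod_{k=1}^n V_{\omega_k}(A)$, an equivalent Hilbert inner product on $H$ in which each $V_{\omega_k}(A)$ is a contraction, and, crucially, a summable ``energy dissipation'' identity obtained by solving the Lyapunov equation $V_\omega^*PV_\omega - P = -Q_\omega$, of the schematic form
\[
\sum_{k=1}^n \omega_k \bigl\|R_{\omega_k}(A)\,T_{k-1}y\bigr\|^2 \leq C\|y\|^2,\qquad y\in H,
\]
with $R_\omega(A)$ built from fractional powers of $A$ and from $(\omega+A)^{-1}$.

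First, I would reduce to a base-case estimate via the interpolation scheme used at the end of the proof of Theorem~\ref{thm:VA_bound_nonanalytic}: uniform boundedness handles the case $\alpha=0$, and block-decomposition of $T_n$ together with the moment inequality of Lemma~\ref{lem:moment_ineq} extends a log-free base case to all $\alpha\geq 0$ without reintroducing logarithms; this contrasts with Section~\ref{sec:Decay_estimate1}, where the logarithm was already present in the base case through the $\mathcal{B}$-calculus bound. Second, for the base case itself, I would substitute $y = A^{-\mu}x$ into the dissipation inequality, commute $T_{k-1}$ past the resolvents and the fractional powers of $A$, and use the Crandall--Pazy smoothing to trade the factor $A^{1/2}(\omega_k+A)^{-1}$ coming from the Lyapunov side for a factor effectively comparable to $A^{\beta/2}(\omega_k+A)^{-1}$. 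The arithmetic then balances: the Lyapunov pigeonhole over $k \in \{1,\dots,n\}$ contributes a factor $n^{-1}$, the Crandall--Pazy smoothing contributes the $\beta$-scaling, and the resulting base-case bound is $\|T_n A^{-\mu}\|\leq C/n^{\mu/(2-\beta)}$ for $\mu$ in a suitable subinterval of $(0,(2-\beta)/2]$.

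The main obstacle will be the second step above, namely extracting the sharp exponent $1/(2-\beta)$ rather than the trivial $1/2$ that the unaugmented Lyapunov sum produces. Achieving this likely requires a $\beta$-weighted Lyapunov function---for instance $P\coloneqq (A+c)^{\beta-1}$, suitably conjugated to accommodate the non-self-adjointness of $A$---whose dissipation quadratic form involves $A^{\beta/2}(\omega+A)^{-1}$ directly rather than $A^{1/2}(\omega+A)^{-1}$, and then invoking the Crandall--Pazy bound $\|A^\beta(\omega+A)^{-1}\|\leq C$ to close the estimate. Verifying that this weighted inner product remains equivalent to $\|\cdot\|_H$ under the standing hypotheses, and that each $V_{\omega_k}(A)$ is still a contraction in the modified norm, is where the assumption that $-A^{-1}$ generates a bounded $C_0$-semigroup enters most crucially.
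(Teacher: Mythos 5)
Your proposal correctly identifies two of the three ingredients the paper uses: the Piskarev--Zwart Lyapunov-equation machinery (Lemma~\ref{lem:PZ_lemma}), and the block-decomposition/moment-inequality argument that extends a log-free base case $0\leq\alpha<(2-\beta)/2$ to all $\alpha\geq 0$. But the central step --- where the exponent $1/(2-\beta)$ actually comes from --- is left as a speculative sketch, and the mechanism you propose for it does not work. You suggest a $\beta$-weighted Lyapunov function such as $P\coloneqq (A+c)^{\beta-1}$ inducing an equivalent inner product in which each $V_{\omega_k}(A)$ is a contraction. For $\beta<1$ the operator $(A+c)^{\beta-1}$ is bounded but not boundedly invertible (since $A$ is unbounded), so the quadratic form $\langle Px,x\rangle$ is \emph{not} equivalent to $\|x\|^2$; the whole contraction-in-a-weighted-norm strategy collapses at that point. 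Moreover, your sketch never connects the Crandall--Pazy smoothing of $(e^{-tA})_{t\geq0}$ to the behavior of the \emph{inverse-generator} semigroup $(e^{-tA^{-1}})_{t\geq0}$, yet it is the Gramian of the latter, $Q(\xi)x=\int_0^\infty e^{-2\xi t}(e^{-tA^{-1}})^*e^{-tA^{-1}}x\,dt$, that appears in the Piskarev--Zwart inequality and whose blow-up rate as $\xi\to0$ governs the final exponent.

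The paper's route is: first prove (Theorem~\ref{thm:CP_inv_Ainv}, via Proposition~\ref{prop:CP_inv_bounded}) that the hypotheses force $\|e^{-tA^{-1}}A^{-\alpha}\|=O(t^{-\alpha/(2-\beta)})$, by transferring the resolvent bound $\|(i\eta+A)^{-1}\|=O(|\eta|^{-\beta})$ from Theorem~\ref{thm:decay_growth} into the bound $\|(i\eta+A^{-1})^{-1}\|=O(|\eta|^{-(2-\beta)})$ as $\eta\to0$ and invoking the quantified Tauberian theorem of Batty--Chill--Tomilov. This decay rate then yields $\sup_{0<\xi<1}\xi^{1-2\alpha/(2-\beta)}\langle x,Q(\xi)x\rangle\lesssim\|A^\alpha x\|^2$ (Lemma~\ref{eq:Q_bound_exp}), and with $r=n/(n+1)$, $\xi_r\sim 1/(2n)$, the \emph{unmodified} Piskarev--Zwart inequality gives $\|T_nx\|\lesssim n^{-1/2}\cdot n^{1/2-\alpha/(2-\beta)}\|A^\alpha x\|$. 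So the sharp exponent does not come from re-engineering the Lyapunov function; it comes from the quantitative decay of $e^{-tA^{-1}}$ on smooth data, which your proposal omits entirely. To repair your argument you would need to supply that step (or an equivalent), which is essentially the content of Sections~\ref{sec:proof_of_inv_gen} and the surrounding machinery.
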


\begin{theorem}
	\label{thm:CP_inv_Ainv}
	Let $-A$ be the generator of a bounded 
	$C_0$-semigroup $(e^{-tA})_{t \geq 0}$ 
	on a Hilbert space $H$ such that $0 \in \varrho(A)$.
	Assume that
	$-A^{-1}$ generates a bounded $C_0$-semigroup 
	$(e^{-tA^{-1}})_{t \geq 0}$ on $H$. Then
	the following statements 
	are equivalent for fixed $\alpha >0$ and $\beta \in (0,1]$:
	\begin{enumerate}
		\renewcommand{\labelenumi}{(\roman{enumi})}
		\item 
		$(e^{-tA})_{t \geq 0}$ is exponentially stable and 
		in the Crandall--Pazy class with parameter $\beta$.
		\item
		$\|e^{-tA^{-1}}A^{-\alpha}\| = O(t^{-\alpha/(2-\beta)})$ as $t \to \infty$.
	\end{enumerate}
\end{theorem}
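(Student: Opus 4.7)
The plan is to use Theorem~\ref{thm:decay_growth} as the bridge between the Crandall--Pazy condition and the resolvent bound $\|(i\eta+A)^{-1}\| = O(|\eta|^{-\beta})$, and to transfer the latter to and from the decay of $\|e^{-tA^{-1}}A^{-\alpha}\|$ by combining the Plancherel theorem for $(e^{-tA^{-1}})_{t\geq 0}$ (available because $-A^{-1}$ generates a bounded $C_0$-semigroup) with the resolvent identity
\[
(\lambda + A^{-1})^{-1} = \lambda^{-1} - \lambda^{-2}(A + \lambda^{-1})^{-1},\qquad \lambda \neq 0,
\]
which relates the resolvent of $A^{-1}$ near $\lambda = 0$ to that of $A$ at large argument. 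For the forward implication (i) $\Rightarrow$ (ii), Theorem~\ref{thm:decay_growth} gives $\|(i\eta + A)^{-1}\| = O(|\eta|^{-\beta})$ as $|\eta| \to \infty$; the identity above then yields a sharp bound on $\|(\xi + i\eta + A^{-1})^{-1} A^{-\alpha}x\|$ for $\xi > 0$ and $\eta$ near $0$, precisely the regime that dominates the Plancherel integral over $\eta \in \mathbb{R}$. Inserting this into the Lyapunov-equation machinery of \cite{Piskarev2007}---the same tool underlying the companion estimate \eqref{eq:CP_case2} for the Crank--Nicolson scheme---should deliver $\|e^{-tA^{-1}}A^{-\alpha}\| = O(t^{-\alpha/(2-\beta)})$ for $0 < \alpha < (2-\beta)/2$. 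The range $\alpha \geq (2-\beta)/2$ is then recovered by moment-inequality interpolation (Lemma~\ref{lem:moment_ineq}), exactly as in the last paragraph of the proof of Theorem~\ref{thm:VA_bound_nonanalytic}.

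For the reverse implication (ii) $\Rightarrow$ (i), I proceed in three steps. First, show $\sigma(A) \cap i\mathbb{R} = \emptyset$: since $\sigma(A^{-1}) \subset \overline{\mathbb{C}_+}$, every point of $\sigma(A^{-1}) \cap i\mathbb{R}$ is an approximate point eigenvalue; if $i\eta_* \in \sigma(A)$ with $\eta_* \neq 0$, then $1/(i\eta_*) \in \sigma(A^{-1}) \cap i\mathbb{R}$, and a Weyl sequence produces unit vectors $x_n \in H$ for which $\|e^{-tA^{-1}}A^{-\alpha}x_n\|$ remains bounded below by a positive constant uniformly in $t \geq 0$, contradicting (ii). Second, apply the Plancherel theorem to (ii) to obtain an $L^2$-integral bound on $\|(\xi + i\eta + A^{-1})^{-1} A^{-\alpha}x\|^2$ over $\eta \in \mathbb{R}$, and invert the resolvent identity to extract the pointwise estimate $\|(i\eta+A)^{-1}\| = O(|\eta|^{-\beta})$ as $|\eta|\to\infty$. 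Third, apply Theorem~\ref{thm:decay_growth}, (ii) $\Rightarrow$ (i), to conclude that $(e^{-tA})_{t \geq 0}$ lies in the Crandall--Pazy class with parameter $\beta$; exponential stability then follows from $\sigma(A) \cap i\mathbb{R} = \emptyset$, the resolvent decay just derived, boundedness of $(e^{-tA})_{t \geq 0}$, and the Gearhart--Pr\"uss theorem.

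The hardest step in both directions is the quantitative passage between an $L^2$-integral bound on $(\xi + i\eta + A^{-1})^{-1}A^{-\alpha}$ and a pointwise decay estimate on $\|(i\eta+A)^{-1}\|$. In (i) $\Rightarrow$ (ii), this amounts to adapting Piskarev's Lyapunov-equation technique from the discrete Cayley-transform setting of Theorem~\ref{thm:CN_CP_Ainv} to the continuous semigroup $e^{-tA^{-1}}$, without incurring any logarithmic loss; in (ii) $\Rightarrow$ (i), it requires combining the resolvent identity with an inversion formula in the spirit of the proof of Lemma~\ref{lem:decay2resol_int}.b) to recover sharp pointwise information from the integral bound.
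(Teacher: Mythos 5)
Your outline reproduces the paper's skeleton --- Theorem~\ref{thm:decay_growth} as the bridge to the resolvent bound for $A$ at infinity, the identity $(\lambda+A^{-1})^{-1}=\lambda^{-1}-\lambda^{-2}(\lambda^{-1}+A)^{-1}$ to transfer it to a bound for $A^{-1}$ near $0$, and the Gearhart--Pr\"uss theorem for exponential stability --- but it leaves open precisely the step that carries all the difficulty: the log-free, two-way quantitative passage between the resolvent estimates $\|(i\eta+A^{-1})^{-1}\|=O(|\eta|^{-(2-\beta)})$ as $\eta\to 0$, $O(1)$ as $|\eta|\to\infty$, and the decay $\|e^{-tA^{-1}}A^{-1}(1+A^{-1})^{-1}\|=O(t^{-1/(2-\beta)})$. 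The paper does not prove this from scratch: it quotes \cite[Theorems~6.10 and 7.6]{Batty2016} (the Borichev--Tomilov-type characterization for semigroups whose resolvent is singular at $0$ rather than at $\infty$), packaged as Proposition~\ref{prop:CP_inv_bounded}. Your proposed substitutes do not fill this gap. A direct Plancherel-plus-inversion argument of the kind used in Lemma~\ref{lem:decay2resol_int}.b) produces exactly the logarithmic loss that the $\mathcal{B}$-calculus approach of Section~\ref{sec:Decay_estimate1} suffers from; removing that logarithm is the whole point of the Hilbert-space results in \cite{Batty2016}. And adapting the Lyapunov machinery of \cite{Piskarev2007} to the continuous semigroup $e^{-tA^{-1}}$ would be circular in the paper's architecture: the bound on $\langle x,Q(\xi)x\rangle$ in Lemma~\ref{eq:Q_bound_exp}, which is what makes the Lyapunov argument of Theorem~\ref{thm:CN_CP_Ainv} run, is itself deduced from the very theorem you are trying to prove.

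Two smaller points. For general $\alpha>0$ the moment inequality only upgrades decay in the forward direction, whereas the stated equivalence requires reducing both directions to $\alpha=1$; the paper does this at the outset via \cite[Lemma~4.2]{Batty2016}, the large-time analogue of Lemma~\ref{lem:domain_decay}, and your proposal does not address the converse for $\alpha\neq 1$. Your Weyl-sequence argument for $\sigma(A)\cap i\mathbb{R}=\emptyset$ is workable but unnecessary: once \cite[Theorem~6.10]{Batty2016} is invoked it already gives $\sigma(A^{-1})\cap i\mathbb{R}\subset\{0\}$, hence $\sigma(A)\cap i\mathbb{R}\subset\{0\}$, and the point $0$ is excluded by the standing hypothesis $0\in\varrho(A)$.
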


First, we prove Theorem~\ref{thm:CP_inv_Ainv} in
Section~\ref{sec:proof_of_inv_gen}.
Next, we give the proof of 
Theorem~\ref{thm:CN_CP_Ainv} in Section~\ref{sec:proof_of_CN}, where 
Theorem~\ref{thm:CP_inv_Ainv} is used.
In this section, we often use 
the following Gearhart--Pr\"uss theorem for bounded $C_0$-semigroups; see, e.g., \cite{Delloro2022} for an elementary proof.
\begin{theorem}
	\label{thm:GP}
	Let $-A$ be the generator of a bounded $C_0$-semigroup $(e^{-tA})_{t \geq 0}$
	on a Hilbert space.
	Then $(e^{-tA})_{t \geq 0}$ is exponentially stable if and only if
	$\sigma(A) \cap i \mathbb{R} = \emptyset$ 
	and $\sup_{\eta \in \mathbb{R}}
	\|(i\eta+A)^{-1}\| < \infty$.
\end{theorem}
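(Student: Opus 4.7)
The plan is to treat the two implications separately.

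\textbf{Forward direction.} If $\|e^{-tA}\| \leq Me^{-\omega t}$ for some $\omega>0$, then the Laplace representation $(\lambda+A)^{-1} = \int_0^\infty e^{-\lambda t}e^{-tA}\,dt$ converges absolutely for $\re\lambda > -\omega$, yielding both $\sigma(A) \cap i\mathbb{R} = \emptyset$ and $\sup_{\eta \in \mathbb{R}}\|(i\eta+A)^{-1}\| \leq M/\omega$.

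\textbf{Converse direction.} Set $N \coloneqq \sup_{\eta \in \mathbb{R}}\|(i\eta+A)^{-1}\|$ and $K \coloneqq \sup_{t \geq 0}\|e^{-tA}\|$. The strategy is to verify the hypothesis of Datko's theorem---which on a Hilbert space equates exponential stability with $(e^{-tA}x) \in L^2(\mathbb{R}_+; H)$ for every $x \in H$---via the Plancherel identity. For each $\xi > 0$, the function $t \mapsto e^{-\xi t}e^{-tA}x$, extended by zero for $t \leq 0$, belongs to $L^2(\mathbb{R}; H)$ with Fourier transform $\eta \mapsto (\xi+i\eta+A)^{-1}x$, so Plancherel yields
\[
\int_{-\infty}^{\infty}\|(\xi+i\eta+A)^{-1}x\|^2\,d\eta = 2\pi \int_0^\infty e^{-2\xi t}\|e^{-tA}x\|^2\,dt.
\]
The core step is to bootstrap this into a right-hand side uniform in $\xi$ near $0$. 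From the resolvent identity $(i\eta+A)^{-1} - (\xi+i\eta+A)^{-1} = \xi\,(i\eta+A)^{-1}(\xi+i\eta+A)^{-1}$ together with the hypothesis $\|(i\eta+A)^{-1}\| \leq N$, I would derive, for $0 < \xi < 1/N$, the two-sided comparison
\[
(1-\xi N)\|(\xi+i\eta+A)^{-1}x\| \leq \|(i\eta+A)^{-1}x\| \leq (1+\xi N)\|(\xi+i\eta+A)^{-1}x\|.
\]
Integrating the right inequality, invoking Plancherel together with $\|e^{-tA}x\| \leq K\|x\|$, and optimizing (e.g.\ $\xi = 1/N$) produces $\int \|(i\eta+A)^{-1}x\|^2\,d\eta \leq 4\pi K^2 N\|x\|^2$. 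Substituting this bound into the left inequality and applying Plancherel once more gives
\[
\int_0^\infty e^{-2\xi t}\|e^{-tA}x\|^2\,dt \leq 8K^2 N\|x\|^2 \quad \text{for all } \xi \in (0, 1/(2N)].
\]
Monotone convergence as $\xi \to 0^+$ delivers $(e^{-tA}x) \in L^2(\mathbb{R}_+; H)$, and Datko's theorem completes the argument.

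\textbf{Main obstacle.} The direct Plancherel estimate yields only the $\xi$-dependent bound $\pi K^2\|x\|^2/\xi$, which diverges as $\xi \to 0^+$. The two-sided bootstrap above replaces that divergent factor with the finite constant $N$ supplied by the hypothesis on the imaginary axis, and both Plancherel and the $L^2$-form of Datko's criterion are precisely the points at which the Hilbert-space setting is indispensable.
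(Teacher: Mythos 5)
The paper does not prove Theorem~\ref{thm:GP} at all: it is stated as a known result (the Gearhart--Pr\"uss theorem) with a pointer to Dell'Oro and Seifert for an elementary proof, so there is no in-paper argument to compare against. Your proof is correct and self-contained modulo the two classical ingredients you invoke (Plancherel and the $L^2$ form of Datko's theorem). The forward direction via the Laplace representation of the resolvent on $\{\re\lambda > -\omega\}$ is standard. In the converse direction, the resolvent identity $(i\eta+A)^{-1}-(\xi+i\eta+A)^{-1}=\xi\,(i\eta+A)^{-1}(\xi+i\eta+A)^{-1}$ does give the two-sided comparison you state for $0<\xi N<1$, and the constants check out: integrating the upper bound at $\xi=1/N$ yields $\int_{\mathbb{R}}\|(i\eta+A)^{-1}x\|^2\,d\eta\le 4\pi K^2N\|x\|^2$, and feeding this back through the lower bound for $\xi\le 1/(2N)$ gives $\int_0^\infty e^{-2\xi t}\|e^{-tA}x\|^2\,dt\le 8K^2N\|x\|^2$ uniformly, so monotone convergence and Datko finish the argument. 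This ``freeze the boundary resolvent, perturb back into the half-plane, and let $\xi\to 0$'' bootstrap is one of the standard proofs of Gearhart--Pr\"uss and is entirely in the spirit of the Plancherel-based arguments the paper uses elsewhere (e.g.\ Lemma~\ref{lem:decay2resol_int}); the only stylistic difference from the cited reference is that you outsource the final step to Datko's theorem rather than deriving exponential decay directly.
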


\subsubsection{Proof of Theorem~\ref{thm:CP_inv_Ainv}}
\label{sec:proof_of_inv_gen}
The following 
proposition will be useful in the proof of Theorem~\ref{thm:CP_inv_Ainv}.
Note that 
a sectorial operator $A$ on a reflexive Banach space $X$
is injective if and only if the range of $A$ is dense  in $X$;
see, e.g., \cite[Proposition~2.1.1.h)]{Haase2006}.
\begin{proposition}
	\label{prop:CP_inv_bounded}
	Let $-A$ be the generator of a $C_0$-semigroup $(e^{-tA})_{t \geq 0}$ 
	on a Hilbert space $H$.
	Assume that $A$ is injective and that
	$-A^{-1}$ generates a bounded $C_0$-semigroup 
	$(e^{-tA^{-1}})_{t \geq 0}$ on $H$.
	Then the following statements are equivalent for 
	fixed $\beta \in (0,1]$ and $\lambda \in \varrho(-A)$:
	\begin{enumerate}
		\renewcommand{\labelenumi}{(\roman{enumi})}
		\item  
		$(e^{-tA})_{t \geq 0}$  is in the Crandall--Pazy class with 
		parameter $\beta$, $\sigma(A) \cap i \mathbb{R} \subset \{ 0 \}$, 
		and $\|(i \eta + A)^{-1}\| = O(|\eta|^{-2})$ as $\eta \to 0$.
		
		\item
		$\displaystyle \|e^{-tA^{-1}}(\lambda +A)^{-1}\| = O(t^{-1/(2-\beta)})$ as $t \to \infty$.
	\end{enumerate}
\end{proposition}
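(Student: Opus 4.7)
The plan is to convert the spectral and resolvent conditions on $A$ in (i) into resolvent estimates for $B \coloneqq A^{-1}$ on the imaginary axis, and then apply the $L^2$-resolvent integral technique in the spirit of Lemma~\ref{lem:decay2resol_int}, with the bounded operator $(\lambda+A)^{-1}$ playing the role of the smoothing factor. The bridge between the two operators is the elementary identity
\[
(i\eta + B)^{-1} = \frac{1}{i\eta}\Big[I - B(i\eta + B)^{-1}\Big], \qquad B(i\eta+B)^{-1} = \frac{1}{i\eta}\Big(A + \frac{1}{i\eta}\Big)^{-1},
\]
which converts resolvent data of $A$ at infinity into data of $B$ near zero, and vice versa.

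For (i) $\Rightarrow$ (ii), I would first substitute the three ingredients of (i) into the identity above. The Crandall--Pazy bound $\|(i\eta+A)^{-1}\| = O(|\eta|^{-\beta})$ as $|\eta|\to\infty$, supplied by Theorem~\ref{thm:decay_growth}, translates into $\|(i\eta+B)^{-1}\| = O(|\eta|^{\beta-2})$ as $\eta\to 0$, while the bound $\|(i\eta+A)^{-1}\| = O(|\eta|^{-2})$ near $0$ yields $\|(i\eta+B)^{-1}\| = O(1)$ as $|\eta|\to\infty$, forcing $\sigma(B)\cap i\mathbb{R}\subset\{0\}$. Next, the factorisation $(\lambda+A)^{-1} = B(I+\lambda B)^{-1}$ shows that $(\lambda+A)^{-1}$ vanishes at $0\in\sigma(B)$ in the functional-calculus sense, so that the identity $(i\eta+B)^{-1}(\lambda+A)^{-1} = (I+\lambda B)^{-1} - i\eta(i\eta+B)^{-1}(I+\lambda B)^{-1}$ improves the pointwise bound by one power to $O(|\eta|^{\beta-1})$ near $\eta = 0$. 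Finally, I would mirror the proof of Lemma~\ref{lem:decay2resol_int}a) to obtain, for a suitable $q\in(0,1/2)$, the weighted integral estimate
\[
\sup_{0<\xi<1}\xi^{1-2q}\int_{-\infty}^{\infty}\big\|(\xi+i\eta+B)^{-1}(\lambda+A)^{-1}x\big\|^2\,d\eta \leq M\|x\|^2,
\]
and combine it with the Plancherel theorem and the inversion-formula/Cauchy--Schwarz step of Lemma~\ref{lem:decay2resol_int}b) to conclude $\|e^{-tA^{-1}}(\lambda+A)^{-1}\| = O(t^{-1/(2-\beta)})$.

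For (ii) $\Rightarrow$ (i), I would reverse the argument. Applying the Plancherel theorem to $t\mapsto e^{-tA^{-1}}(\lambda+A)^{-1}x$ together with the polynomial decay in (ii) produces the corresponding $L^2$ integral bound. The inversion-formula/Cauchy--Schwarz argument of Lemma~\ref{lem:decay2resol_int}b) then extracts a pointwise resolvent estimate for $(i\eta+B)^{-1}(\lambda+A)^{-1}$ near $\eta=0$, which by the resolvent identity between $A$ and $B$ translates back into $\|(i\eta+A)^{-1}\| = O(|\eta|^{-\beta})$ as $|\eta|\to\infty$ together with the complementary bound $O(|\eta|^{-2})$ near $0$ and the spectral inclusion $\sigma(A)\cap i\mathbb{R}\subset\{0\}$. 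Theorem~\ref{thm:decay_growth} then recognises the Crandall--Pazy property with parameter $\beta$.

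The main obstacle is the precise calibration in the forward direction. Because $0\in\sigma(B)\cap i\mathbb{R}$, Theorem~\ref{thm:polynomial_decay} cannot be invoked verbatim for $B$, and the $L^2$ integral argument must track carefully how the single order of smoothing supplied by $(\lambda+A)^{-1}$ cancels the singularity of $(i\eta+B)^{-1}$ near zero. The sharp exponent $1/(2-\beta)$, rather than the generic Hilbert-space rate $1/2$, reflects precisely how the Crandall--Pazy parameter $\beta$ improves the $|\eta|^{-2}$ blow-up of the $A$-resolvent at infinity into the tighter $|\eta|^{\beta-2}$ blow-up of the $B$-resolvent near zero, and threading this improvement through the interpolation inequalities that carry the $L^2$ estimate to the pointwise decay rate is the delicate step.
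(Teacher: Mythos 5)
Your overall architecture agrees with the paper's up to a point: the paper also begins with the elementary identity $(1/\lambda - B^{-1})^{-1} = \lambda - \lambda^{2}(\lambda - B)^{-1}$ to translate the three conditions in (i) into $\sigma(A^{-1})\cap i\mathbb{R}\subset\{0\}$ together with $\|(i\eta+A^{-1})^{-1}\| = O(|\eta|^{-(2-\beta)})$ as $\eta\to 0$ and $O(1)$ as $|\eta|\to\infty$, and your computation that the factor $(\lambda+A)^{-1}=A^{-1}(1+\lambda A^{-1})^{-1}$ gains exactly one power near $\eta=0$ is correct. The divergence is in what happens next: the paper does not re-run the $L^2$-resolvent machinery at all, but simply cites the quantified Tauberian theorems for semigroups with a spectral singularity at zero, namely \cite[Theorem~7.6]{Batty2016} for the forward implication (resolvent blow-up of order $2-\beta$ at $0$ gives $\|e^{-tA^{-1}}A^{-1}(1+A^{-1})^{-1}\|=O(t^{-1/(2-\beta)})$) and \cite[Theorem~6.10]{Batty2016} for the converse, plus the Gearhart--Pr\"uss theorem to dispose of the case $0\notin\sigma(A^{-1})$.

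The genuine gap is in your forward direction. You propose to obtain the weighted integral estimate
\[
\sup_{0<\xi<1}\xi^{1-2q}\int_{-\infty}^{\infty}\big\|(\xi+i\eta+B)^{-1}(\lambda+A)^{-1}x\big\|^{2}\,d\eta \leq M\|x\|^{2}
\]
and then conclude via the inversion formula and Cauchy--Schwarz, ``mirroring Lemma~\ref{lem:decay2resol_int}''. But in Lemma~\ref{lem:decay2resol_int}.a) the weighted integral estimate is \emph{derived from the semigroup decay} by the Plancherel theorem; here the semigroup decay of $e^{-tA^{-1}}(\lambda+A)^{-1}$ is precisely what you are trying to prove, so as written the argument is circular. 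Passing directly from the pointwise bounds $\|(i\eta+B)^{-1}(\lambda+A)^{-1}\|=O(|\eta|^{-(1-\beta)})$ near $0$ and $O(1)$ at infinity to the weighted $L^{2}$ estimate is the hard content of the Borichev--Tomilov/Batty--Chill--Tomilov theorem: pointwise operator-norm bounds do not even give square-integrability of $\eta\mapsto(\xi+i\eta+B)^{-1}y$ at infinity (that requires Plancherel and the boundedness of the semigroup, with the weight $\xi^{1}$ rather than $\xi^{1-2q}$), and reaching the endpoint exponent $1/(2-\beta)$ without a logarithmic loss requires either the iteration scheme of those papers or a fractional-power bootstrap in the spirit of \cite[Lemma~4.2]{Batty2016}, neither of which appears in your sketch even though you correctly identify this calibration as the delicate step. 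Your converse direction (Plancherel plus the inversion formula to extract resolvent bounds from decay, then removing the smoothing factor via $(i\eta+B)^{-1}=(i\eta)^{-1}(I-(i\eta+B)^{-1}B)$) is sound and parallels what the cited converse theorem provides, but the proposal as it stands does not prove (i) $\Rightarrow$ (ii).
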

\begin{proof}
	Let $B$ be an injective operator on a Banach space $X$.
	A routine calculation shows that 
	for all $\lambda \in \varrho(B) \setminus \{ 0\}$, 
	\begin{equation}
		\label{eq:A_inv_resol_set}
		\frac{1}{\lambda} 
		\in \varrho(B^{-1})
	\end{equation}
	and
	\begin{equation}
		\label{eq:A_inv_resol}
		\left(
		\frac{1}{\lambda} -B^{-1}
		\right)^{-1} = 
		\lambda - \lambda^2 (\lambda -B)^{-1}.
	\end{equation}
	
	Proof of (i) $\Rightarrow$ (ii).
	From \eqref{eq:A_inv_resol_set} with $B = -A$  and the assumption
	$\sigma(A) \cap i \mathbb{R} \subset \{ 0\}$,
	we have that
	$\sigma(A^{-1}) \cap i \mathbb{R} \subset \{0 \}$.
	First, we show that the estimates
	\begin{equation}
		\label{eq:resol_inv_0}
		\|(i \eta + A^{-1})^{-1}\| = 
		O\left(\frac{1}{|\eta|^{2-\beta}} \right)\quad \text{as $\eta \to 0$}
	\end{equation}
	and 
	\begin{equation}
		\label{eq:resol_inv_infty}
		\|(i \eta + A^{-1})^{-1}\| = 
		O(1)\quad \text{as $|\eta| \to \infty$}
	\end{equation} 
	hold.
	Since $(e^{-tA})_{t \geq 0}$  is in the Crandall--Pazy class with 
	parameter $\beta$,
	Theorem~\ref{thm:decay_growth} gives
	\begin{equation}
		\label{eq:resol_estimate_inv}
		\|(i \eta + A)^{-1} \| = O(|\eta|^{-\beta}) \quad \text{as $|\eta| \to \infty$}.
	\end{equation}
	By \eqref{eq:A_inv_resol} in the case when
	$B = -A$ and $\lambda = 1/(i\eta)$ for $\eta \in \mathbb{R} \setminus \{0\}$,
	\[
	(i\eta +A^{-1})^{-1} = 
	\frac{1}{i \eta} + \frac{1}{\eta^2}
	\left(
	\frac{1}{i \eta} + A
	\right)^{-1}.
	\]
	Noting that 
	\eqref{eq:resol_estimate_inv} can be written as
	\[
	\left\|
	\left ( \frac{1}{i\eta} + A \right)^{-1} 
	\right\| = O(|\eta|^{\beta}) \quad \text{as $\eta \to 0$},
	\]
	we obtain the estimate \eqref{eq:resol_inv_0}.
	Similarly, 
	the assumption that $\|(i \eta + A)^{-1}\| = O(|\eta|^{-2})$ as $\eta \to 0$
	yields the estimate \eqref{eq:resol_inv_infty}.
	
	Next, we show that statement
	(ii) is true, by using the estimates~\eqref{eq:resol_inv_0}
	and \eqref{eq:resol_inv_infty}.
	Suppose that $0 \notin \sigma(A^{-1})$.
	Then
	$\sup_{\eta \in \mathbb{R}} \| (i\eta + A^{-1})^{-1}\| < \infty$
	by \eqref{eq:resol_inv_infty}.
	The Gearhart--Pr\"uss theorem  shows that 
	$(e^{-tA^{-1}})_{t \geq 0}$ is exponentially stable, and therefore
	statement
	(ii) is true.
	
	Suppose that $0 \in \sigma(A^{-1})$, and
	let $\lambda \in \varrho(-A)$.
	Since
	\begin{equation}
		\label{eq:omega_A_transform}
		(\lambda +A)^{-1}=
		A^{-1}(1+\lambda A^{-1})^{-1},
	\end{equation}
	we have that 
	\begin{align}
		\label{eq:inv_lambda_A}
		\big\|e^{-tA^{-1}} (\lambda+A)^{-1} \big\| \leq 
		\big \|
		(1+A^{-1}) (1+\lambda A^{-1})^{-1}
		\big\| \,
		\big\|e^{-tA^{-1}} A^{-1}(1 +A^{-1})^{-1} \big\|.
	\end{align}
	Moreover,
	\cite[Theorem 7.6]{Batty2016} shows that 
	if the estimates \eqref{eq:resol_inv_0} and 
	\eqref{eq:resol_inv_infty} hold, then
	\begin{equation}
		\label{eq:Ainv_1_Ainv}
		\big\|e^{-tA^{-1}} A^{-1}(1 + A^{-1})^{-1} \big\| 
		= O\left(\frac{1}{t^{1/(2-\beta)}} \right)
		\quad \text{as $t \to \infty$}.
	\end{equation}
	From \eqref{eq:inv_lambda_A} and \eqref{eq:Ainv_1_Ainv},
	we conclude that statement~(ii) is true.

	Proof of (ii) $\Rightarrow$ (i).
	Let $\lambda \in \varrho(-A)$.
	From \eqref{eq:omega_A_transform}, we have 
	\[
	A^{-1} x= 
	(\lambda +A)^{-1}(1+\lambda A^{-1})x
	\]
	for all $x \in D(A^{-1})$,
	and hence
	\begin{align*}
		\big\|e^{-tA^{-1}} A^{-1}(1 + A^{-1})^{-1} \big\|  &\leq 
		\big\|
		(1 + \lambda A^{-1}) (1 + A^{-1})^{-1}
		\big\|\,
		\big\|e^{-tA^{-1}} (\lambda+ A)^{-1} \big\|.
	\end{align*}
	This, together with statement~(ii), yields \eqref{eq:Ainv_1_Ainv}.
	Therefore,
	\cite[Theorem~6.10]{Batty2016} shows that 
	$\sigma(A^{-1}) \cap i \mathbb{R} 
	\subset \{ 0\}$ and that
	the estimates \eqref{eq:resol_inv_0}
	and \eqref{eq:resol_inv_infty} hold.
	Using
	\eqref{eq:A_inv_resol_set} and \eqref{eq:A_inv_resol} with $B = -A^{-1}$
	as in the proof of the implication (i) $\Rightarrow$ (ii), we derive 
	$\sigma(A) \cap i \mathbb{R} \subset \{0 \}$,
	\begin{equation}
		\label{eq:resol_inf_estimate}
		\|
		(
		i\eta + A
		)^{-1}\| = 
		O\left( \frac{1}{|\eta|^{\beta}} \right)\quad \text{as $|\eta| \to \infty$},
	\end{equation}
	and
	\begin{equation}
		\|(i \eta + A)^{-1}\| = O\left( \frac{1}{|\eta|^2} \right)\quad \text{as $\eta \to 0$}.
	\end{equation}
	By the estimate \eqref{eq:resol_inf_estimate} and 
	Theorem~\ref{thm:decay_growth},
	the $C_0$-semigroup 
	$(e^{-tA})_{t \geq 0}$  is in the Crandall--Pazy class with 
	parameter $\beta$.
\end{proof}

\begin{proof}[Proof of Theorem~\ref{thm:CP_inv_Ainv}.]
	Let $\alpha >0$ be given.
	By \cite[Lemma~4.2]{Batty2016}, 
	statement (ii) holds if and only if
	\begin{equation}
		\label{eq:inv_gen_a_1}
		\big\|e^{-tA^{-1}}A^{-1}\big\| = O
		\left( \frac{1}{t^{1/(2-\beta)}}\right) 
	\end{equation}
	as $t \to \infty$.

	Proof of (i) $\Rightarrow$ (ii).
	If $(e^{-tA})_{t \geq 0}$ is exponentially stable, then 
	$\sigma(A) \cap i \mathbb{R} = \emptyset$ and 
	$\sup_{\eta \in \mathbb{R}}\|(i \eta + A)^{-1}\| < \infty$.
	The estimate
	\eqref{eq:inv_gen_a_1}
	follows immediately from
	the implication (i)
	$\Rightarrow$ (ii) in Proposition~\ref{prop:CP_inv_bounded}.
	
	Proof of (ii) $\Rightarrow$ (i).
	By 
	the implication (ii)
	$\Rightarrow$ (i) in Proposition~\ref{prop:CP_inv_bounded},
	the $C_0$-semigroup 
	$(e^{-tA})_{t \geq 0}$  is in the Crandall--Pazy class
	and $\sigma(A) \cap i \mathbb{R} \subset \{ 0 \}$.
	It remains to prove that
	$(e^{-tA})_{t \geq 0}$ is exponentially stable.
	From the assumption $0 \in \varrho(A)$, we obtain $\sigma(A) \cap i \mathbb{R}
	= \emptyset$.
	Since 
	$\|(i \eta + A)^{-1} \| = O(|\eta|^{-\beta})$ as $|\eta| \to \infty$
	by Theorem~\ref{thm:decay_growth}, it follows that 
	$\sup_{\eta \in \mathbb{R}} \|(i \eta + A)^{-1}\| < \infty$.
	Thus,
	$(e^{-tA})_{t \geq 0}$ is exponentially stable by 
	the Gearhart--Pr\"uss theorem.
\end{proof}

\subsubsection{Proof of Theorem~\ref{thm:CN_CP_Ainv}}
\label{sec:proof_of_CN}
For $r \in (0,1)$, we define $\xi_r >0$  by
\begin{equation}
	\label{eq:xi_def}
	\xi_r \coloneqq \frac{1-r^2}{2(r^2+1)}.
\end{equation}
In the proofs of \cite[Lemmas~2.1 and 2.2]{Piskarev2007},
the following result was derived from the approach based on 
Lyapunov equations.
\begin{lemma}
	\label{lem:PZ_lemma}
	Let $-A$ be the generator of a bounded $C_0$-semigroup
	$(e^{-tA})_{t \geq 0}$ on 
	a Hilbert space $H$. 
	Assume 
	that $A$ is injective and that $-A^{-1}$ generates a bounded $C_0$-semigroup $(e^{-tA^{-1}})_{t \geq 0}$ on  $H$. Let $0 < \omega_{\min} \leq \omega_{\max} < \infty$, and
	define
	$R(r) \in \mathcal{L}(H)$ by 
	\[
	R(r) \coloneqq 2\omega_{\max}P\left(
	\omega_{\min } \xi_r
	\right) + 
	\frac{2}{\omega_{\min}}
	Q\left(\frac{\xi_r}{\omega_{\max}}\right),\quad r \in (0,1),
	\]
	where $\xi_r>0$ is as in \eqref{eq:xi_def}
	and $P(\xi), Q(\xi) \in \mathcal{L}(H)$ are defined by
	\begin{align}
		P(\xi) x &\coloneqq 
		\int^{\infty}_0 e^{-2\xi t} (e^{-tA})^* e^{-tA}x dt, \label{eq:P_def}\\
		Q(\xi) x &\coloneqq  
		\int^{\infty}_0 e^{-2\xi t} (e^{-tA^{-1}})^* e^{-tA^{-1}}x dt
		\label{eq:Q_def}
	\end{align}
	for $\xi >0$ and $x \in H$.
	Then
	there exists a constant $K_0 >0$ such that 
	\[
	\left|(n+1)r^n \left\langle y, \left( \prod_{k=1}^n V_{\omega_k}(A)\right)x
	\right\rangle
	\right| \leq 
	\frac{K_0 \|y\| }{\sqrt{1-r}} \sqrt{\langle 
		x, R(r)x
		\rangle}
	\] 
	for all $x, y \in H$, $n \in \mathbb{N}$, $r \in (0,1)$, and $(\omega_k)_{k=1}^{\infty}
	\in \mathcal{S}(\omega_{\min},\omega_{\max})$.
\end{lemma}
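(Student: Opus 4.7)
The plan is to follow the Lyapunov-equation approach of Piskarev and Zem\'anek used in the proofs of their Lemmas~2.1 and 2.2, tracking the parameter $r$ explicitly. The argument proceeds in three main stages, the last being the most delicate.

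First, I would verify that, for $\xi_1:=\omega_{\min}\xi_r$ and $\xi_2:=\xi_r/\omega_{\max}$, the operators $P(\xi_1)$ and $Q(\xi_2)$ satisfy the Lyapunov equations
\[
A^*P(\xi_1)+P(\xi_1)A+2\xi_1 P(\xi_1)=I\ \text{on}\ D(A),\qquad A^{-*}Q(\xi_2)+Q(\xi_2)A^{-1}+2\xi_2 Q(\xi_2)=I\ \text{on}\ D(A^{-1}),
\]
by differentiating \eqref{eq:P_def} and \eqref{eq:Q_def} under the integral sign and integrating by parts. These identities, combined with the algebraic Cayley relation $E - V_\omega(A)^*EV_\omega(A) = 2\omega(A^*+\omega)^{-1}(A^*E+EA)(A+\omega)^{-1}$ applied with $E=P(\xi_1)$, and the analogous identity for $A^{-1}$ applied with $E=Q(\xi_2)$ (exploiting $V_\omega(A)=-V_{1/\omega}(A^{-1})$), yield a quadratic expression for $\langle R(r)x,x\rangle-\langle R(r)V_\omega(A)x,V_\omega(A)x\rangle$ in the auxiliary vectors $u=(A+\omega)^{-1}x$ and $v=(A^{-1}+1/\omega)^{-1}x$. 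The specific weights $2\omega_{\max}$ and $2/\omega_{\min}$ in $R(r)$ are tuned, using $\omega_{\min}\le\omega\le\omega_{\max}$ and the relation $r^2=(1-2\xi_r)/(1+2\xi_r)$ from \eqref{eq:xi_def}, so that this defect is nonnegative and supplies a quantitative $R(r)$-norm contractivity with a residual proportional to $\xi_r\|x\|^2$.

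Second, iterating the single-step estimate across the product yields $\langle R(r)T_nx,T_nx\rangle\le\langle R(r)x,x\rangle$ together with the summability
\[
\xi_r\sum_{k=0}^{n-1}\|T_kx\|^2\le C\langle R(r)x,x\rangle,
\]
and, applying the same argument to $A^*$ (whose hypotheses are symmetric), the parallel bound $\xi_r\sum_{k=0}^{n-1}\|T_k^*y\|^2\le C\|y\|^2$. Since $\xi_r\asymp 1-r$ as $r\to 1^-$, these translate into $\sum_{k\ge 0}r^{2k}\|T_k^*y\|^2\le C\|y\|^2/(1-r)$ and an analogous weighted sum controlled by $\langle x,R(r)x\rangle$.

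Third, the prefactor $(n+1)r^n$ is extracted by writing $T_n=T_k S_{k,n}$ with $S_{k,n}:=\prod_{j=k+1}^nV_{\omega_j}(A)$, using $\langle y,T_nx\rangle=\langle T_k^*y,S_{k,n}x\rangle$, and applying the identity $(n+1)r^n=\sum_{k=0}^n r^k\cdot r^{n-k}$ together with Cauchy--Schwarz in $k$:
\[
(n+1)r^n|\langle y,T_nx\rangle|\le\Big(\sum_{k=0}^n r^{2k}\|T_k^*y\|^2\Big)^{1/2}\Big(\sum_{k=0}^n r^{2(n-k)}\|S_{k,n}x\|^2\Big)^{1/2}.
\]
The first factor is controlled by the adjoint summability from the second stage, and the second is controlled by iterating the $R(r)$-contractivity along the tails $S_{k,n}$ and invoking the lower bound $R(r)\ge c_0 I$ (from exponential stability of $(e^{-tA})_{t\ge 0}$, which forces $P(\xi_1)\gtrsim I$ uniformly in $\xi_1$).

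The main obstacle is obtaining the sharp $1/\sqrt{1-r}$ power: a naive balancing of the two Cauchy--Schwarz sums yields only $1/(1-r)$, so the summabilities of the second stage must be interlocked in a subtler way, essentially combining the \emph{standard}-norm bound on $\|T_k^*y\|^2$ with the \emph{$R(r)$}-norm contractivity of $S_{k,n}x$ so that each factor contributes only $1/\sqrt{1-r}$ after absorbing the lower frame bound on $R(r)$. Verifying that the algebraic cancellations at the Lyapunov step produce exactly this balance, and identifying the precise residual term, is the technical heart of the argument and is the content of the proofs of \cite[Lemmas~2.1 and 2.2]{Piskarev2007}.
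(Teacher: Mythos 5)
Note first that the paper does not prove this lemma at all: it is quoted as extracted from the proofs of \cite[Lemmas~2.1 and 2.2]{Piskarev2007} (Piskarev--\emph{Zwart}, incidentally, not Zem\'anek), so your proposal has to be measured against that Lyapunov-equation argument. Your raw materials are the right ones and match it: the Lyapunov identities $A^*P(\xi_1)+P(\xi_1)A+2\xi_1P(\xi_1)=I$ and its analogue for $Q(\xi_2)$, the quadratic-form identity $\langle x,Ex\rangle-\langle V_\omega(A)x,EV_\omega(A)x\rangle=2\omega\langle u,(A^*E+EA)u\rangle$ with $u=(A+\omega)^{-1}x$, the relation $V_\omega(A)=-V_{1/\omega}(A^{-1})$, and the head--tail split with $(n+1)r^n=\sum_{k=0}^n r^k\,r^{n-k}$ followed by Cauchy--Schwarz. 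The reason both $P$ and $Q$ enter $R(r)$ is, as you sense, that the two gain terms $2\omega\|(A+\omega)^{-1}x\|^2$ and $2\omega^{-1}\|(A^{-1}+\omega^{-1})^{-1}x\|^2$ jointly dominate $c\|x\|^2$.

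There are, however, two genuine gaps in the middle of your argument. First, Stage~1 is false as stated: inserting the Lyapunov equation gives $\langle x,Px\rangle-\langle V_\omega(A)x,PV_\omega(A)x\rangle=2\omega\|u\|^2-4\omega\xi_1\langle u,P(\xi_1)u\rangle$, and the wrong-sign term, estimated via $\|P(\xi_1)\|\le K^2/(2\xi_1)$, is of size $2\omega K^2\|u\|^2$ --- the factor $\xi_1$ cancels --- so for $K>1$ a single step can \emph{increase} the $R(r)$-form; the defect is not nonnegative, the residual is not $O(\xi_r)\|x\|^2$, and hence neither the contractivity $\langle R(r)T_nx,T_nx\rangle\le\langle R(r)x,x\rangle$ nor the summabilities of your Stage~2 follow. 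The missing idea is commutation: since $(A+\omega)^{-1}$ commutes with $e^{-tA}$ one gets $\langle u,P(\xi_1)u\rangle\le K^2\omega^{-2}\langle x,P(\xi_1)x\rangle$ (and similarly on the $Q$-side), so with $\xi_1=\omega_{\min}\xi_r$, $\xi_2=\xi_r/\omega_{\max}$ the bad terms are $O(\xi_r)\langle x,R(r)x\rangle$; because $1-r^2\asymp\xi_r$, they can then be absorbed after telescoping against the weights $r^{2k}$ (possibly after shrinking the arguments of $P,Q$ by a fixed constant factor, which is harmless), yielding the master inequality $\sum_{k=0}^n r^{2k}\|\Pi_k x\|^2\le C\langle x,R(r)x\rangle$ for arbitrary products $\Pi_k$ of Cayley transforms. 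Second, your Stage~3 invokes exponential stability and a frame bound $R(r)\ge c_0I$: exponential stability is not a hypothesis of the lemma, and the bound is false even when it holds (for the nilpotent left shift on $L^2(0,1)$, $\langle x,P(\xi)x\rangle$ is not bounded below by $c\|x\|^2$). It is also unnecessary, and your worry about only reaching $1/(1-r)$ dissolves: apply the master inequality to the tails $S_{k,n}x$, giving $C\langle x,R(r)x\rangle$, and its adjoint version (the hypotheses are symmetric under $A\mapsto A^*$) to $T_k^*y$ together with the crude bound $\langle y,R^*(r)y\rangle\le C\xi_r^{-1}\|y\|^2\le C'(1-r)^{-1}\|y\|^2$ from $\|P(\xi)\|,\|Q(\xi)\|\le K^2/(2\xi)$; the product of the two Cauchy--Schwarz factors is then exactly of the form $K_0(1-r)^{-1/2}\|y\|\sqrt{\langle x,R(r)x\rangle}$. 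So the sharp power comes from this asymmetry, not from any delicate interlocking --- but the two steps above, which you deferred to \cite{Piskarev2007}, are precisely where your sketch, taken literally, breaks.
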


Now we examine $\langle
x, P(\xi)x
\rangle$  and $\langle
x, Q(\xi)x
\rangle$
for $\xi >0$, where $P(\xi), Q(\xi) \in \mathcal{L}(H)$ are defined by \eqref{eq:P_def} and \eqref{eq:Q_def}, respectively.
If $(e^{-tA})_{t \geq 0}$ is exponentially stable, then
there exist $K\geq 1$ and $c>0$ such that 
$\| e^{-tA}\| \leq Ke^{-ct}$ for all $t \geq 0$.
Therefore,
\begin{equation}
	\label{eq:P_estimate}
	\langle x, P(\xi) x \rangle  \leq K^2 \|x\|^2
	\int_0^{\infty} e^{-2(\xi+c) t} dt \leq 
	\frac{K^2 \|x\|^2}{2c}
\end{equation}
for all $\xi >0$ and $x \in H$. The following lemma gives
an estimate for 
$\langle x, Q(\xi) x \rangle $
when $(e^{-tA})_{t \geq 0}$
is in the Crandall--Pazy class.
\begin{lemma}
	\label{eq:Q_bound_exp}
	Let $-A$ be the generator of 
	an exponentially stable $C_0$-semigroup $(e^{-tA})_{t \geq 0}$
	in the Crandall--Pazy class with parameter $\beta \in (0,1]$
	on a Hilbert space $H$.
	Assume that $-A^{-1}$ generates
	a  bounded $C_0$-semigroup 
	$(e^{-tA^{-1}})_{t \geq 0}$
	on  $H$, and define
	$Q(\xi) \in \mathcal{L}(H)$ by \eqref{eq:Q_def} for $\xi >0$.
	For each $0\leq  \alpha< (2 - \beta)/2$,
	there exists a constant $K_0>0$ such that 
	\begin{equation}
		\label{eq:Q_bound1}
		\sup_{0< \xi < 1}
		\xi^{1-2\alpha/(2-\beta)}
		\langle
		x, Q(\xi)x
		\rangle
		\leq 
		K_0 \|A^{\alpha}x\|^2
	\end{equation}
	for all $x \in D(A^{\alpha})$.
\end{lemma}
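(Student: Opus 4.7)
Write $p \coloneqq 2\alpha/(2-\beta)$, which lies in $[0,1)$ by hypothesis. My plan is to reduce the estimate to a pointwise bound on $\|e^{-tA^{-1}}x\|$ for $x \in D(A^\alpha)$, insert it into the definition of $Q(\xi)$, and then handle a scalar integral in $t$.

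First I would combine Theorem~\ref{thm:CP_inv_Ainv} with the boundedness of $(e^{-tA^{-1}})_{t\geq 0}$ to produce a single uniform bound
\[
\|e^{-tA^{-1}} A^{-\alpha}\| \leq \frac{M}{(1+t)^{\alpha/(2-\beta)}}, \qquad t \geq 0,
\]
for some $M>0$. Hypothesis~(i) of Theorem~\ref{thm:CP_inv_Ainv} is satisfied (exponential stability gives $0\in\varrho(A)$, hence $A^{-\alpha}\in\mathcal{L}(H)$), so its implication (i)$\Rightarrow$(ii) supplies decay of order $t^{-\alpha/(2-\beta)}$ at infinity when $\alpha>0$; for small $t$ the trivial bound $K\|A^{-\alpha}\|$ with $K\coloneqq \sup_{t\geq 0}\|e^{-tA^{-1}}\|$ is enough, and the two can be merged. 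The case $\alpha=0$ uses only the uniform bound on the inverse semigroup.

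Next, since $A^{-\alpha}$ commutes with $e^{-tA^{-1}}$ via the sectorial calculus and $A^{-\alpha}A^\alpha x = x$ for $x \in D(A^\alpha)$, I would conclude $\|e^{-tA^{-1}}x\|^2 \leq M^2 (1+t)^{-p}\|A^\alpha x\|^2$. Substituting this into $\langle x, Q(\xi)x\rangle = \int_0^\infty e^{-2\xi t}\|e^{-tA^{-1}}x\|^2\,dt$ reduces the lemma to the scalar claim
\[
\sup_{0<\xi<1}\;\xi^{1-p}\int_0^\infty \frac{e^{-2\xi t}}{(1+t)^p}\,dt < \infty.
\]

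Splitting this integral at $t=1$ handles the part on $[0,1]$ trivially (its value is at most $1$, and $\xi^{1-p}\leq 1$ on $(0,1)$), while on $[1,\infty)$ I would use $(1+t)^{-p}\leq t^{-p}$ and the substitution $u=2\xi t$ to reduce the tail to a constant multiple of $\Gamma(1-p)$. The only delicate point, and the reason for the restriction $\alpha<(2-\beta)/2$, is that the condition $p<1$ is precisely what makes $\Gamma(1-p)$ finite; so this is not so much an obstacle as it is the critical place where the exponent range is used. I do not anticipate any further technical difficulty.
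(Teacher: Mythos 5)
Your proposal is correct and follows essentially the same route as the paper: both invoke Theorem~\ref{thm:CP_inv_Ainv} to get the decay of $\|e^{-tA^{-1}}A^{-\alpha}\|$, split the time integral into a bounded initial part and a tail, and reduce the tail to the gamma-function identity $\int_0^\infty e^{-2\xi t}t^{-p}\,dt=\Gamma(1-p)(2\xi)^{p-1}$, with $p=2\alpha/(2-\beta)<1$ being exactly where the restriction on $\alpha$ enters. The only cosmetic difference is that you merge the two regimes into a single $(1+t)^{-p/2}$ bound before integrating, whereas the paper keeps the split at $t_0$ and bounds the initial piece by $t_0K_1^2\|x\|^2$.
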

\begin{proof}
	Let $0 \leq \alpha < (2-\beta)/2$.
	Define
	$\gamma \coloneqq 2\alpha/(2-\beta)$ and 
	$
	K_1 \coloneqq 
	\sup_{t \geq 0}\|e^{-tA^{-1}}\|$.
	By Theorem~\ref{thm:CP_inv_Ainv},
	there exist $K_2, t_0 >0$ such that 
	for all $t \geq t_0$,
	\[
	\big\|e^{-tA^{-1}}A^{-\alpha}\big\| \leq \frac{K_2}{t^{\gamma/2}}.
	\] 
	Hence,
	\begin{align*}
		\langle x, Q(\xi) x\rangle 
		&=
		\int^{t_0}_0 e^{-2\xi t} \big\|e^{-tA^{-1}}x\big\|^2 dt + 
		\int^{\infty}_{t_0} e^{-2\xi t}\big\|e^{-tA^{-1}}x\big\|^2 dt \\
		&\leq 
		t_0 K_1^2 \|x\|^2 + 
		K_2^2 \|A^{\alpha}x\|^2 \int^{\infty}_{t_0} \frac{e^{-2\xi t} }{t^{\gamma }} dt
	\end{align*}
	for all $x \in D(A^{\alpha})$ and $\xi >0$. 
	Since $0 \leq \gamma  < 1$, we have
	\[
	\int^{\infty}_{t_0} \frac{e^{-2\xi t} }{t^{\gamma }} dt \leq 
	\int^{\infty}_0
	\frac{e^{-2\xi t} }{t^{\gamma}} dt =
	\frac{\Gamma(1-\gamma)}{(2\xi)^{1-\gamma}}.
	\]
	Thus, there exists $K_0>0$ such that 
	\eqref{eq:Q_bound1} holds for all $x \in D(A^{\alpha})$.
\end{proof}

\begin{proof}[Proof of Theorem~\ref{thm:CN_CP_Ainv}]
	Let
	the operator $R(r) \in \mathcal{L}(H)$ be as in Lemma~\ref{lem:PZ_lemma}.
	By this lemma,
	there exists $K_1>0$ such that 
	\begin{equation}
		\label{eq:xn_bound}
		\left\| \left( \prod_{k=1}^n V_{\omega_k}(A) \right)
		x
		\right\| \leq  \frac{K_1\sqrt{\langle x, R(r)x \rangle } }{(n+1)r^n \sqrt{1-r}}
	\end{equation}
	for all $x \in H$, $n \in \mathbb{N}$, $r \in (0,1)$,
	and $(\omega_k)_{k=1}^{\infty}
	\in \mathcal{S}(\omega_{\min},\omega_{\max})$.
	
	Let $0 \leq \alpha < (2-\beta)/2$.
	By \eqref{eq:P_estimate} and \eqref{eq:Q_bound1}, there exist
	constants $K_2,K_3 >0$ and $r_0\in(0,1)$, depending on $\omega_{\min}$
	and $\omega_{\max}$, such that 
	\begin{equation}
		\label{xRx_bound}
		\langle x, R(r)x \rangle
		\leq
		K_2 \|x\|^2 + 
		\frac{K_3 }{\xi_r^{1-2\alpha/(2-\beta)}}\|A^{\alpha}x\|^2
	\end{equation}
	for all $x \in D(A^\alpha)$ and  $r \in (r_0,1)$,
	where $\xi_r>0$ is as in \eqref{eq:xi_def}.
	If $r = n/(n+1)$ for $n \in \mathbb{N}$, 
	then
	\[
	\xi_r = \frac{1-r^2}{2(r^2+1)} =  \frac{2n+1}{2(2n^2+2n+1)}
	\]
	and
	\[
	\frac{1}{(n+1)r^n \sqrt{1-r}} = \frac{(1+1/n)^n}{\sqrt{n+1}}.
	\]
	By the 
	estimates \eqref{eq:xn_bound} and 
	\eqref{xRx_bound},
	there exist $M_1 >0$ and $n_0 \in \mathbb{N}$ such that 
	\[
	\left\| \left( \prod_{k=1}^n V_{\omega_k}(A) \right)
	x
	\right\|\leq \frac{M_1 }{n^{\alpha/(2-\beta)}}
	\|A^{\alpha}x\|
	\]
	for all $x \in D(A^{\alpha})$, $n \geq  n_0$,
	and $(\omega_k)_{k=1}^{\infty}
	\in \mathcal{S}(\omega_{\min},\omega_{\max})$.
	We also have that  for all $1 \leq n < n_0$,
	\[
	\left\|
	\prod_{k=1}^n
	V_{\omega_k}(A)	
	\right\| \leq 
	\max\big\{
	1,\,
	\max\{
	\|V_{\omega}(A)\|^{n_0}: \omega_{\min} \leq \omega \leq \omega_{\max}
	\}
	\big\}.
	\]
	Therefore, we obtain the desired estimate \eqref{eq:CN_estimate_Ainv}
	in the case $0 \leq \alpha < (2-\beta)/2$. The case $\alpha \geq (2-\beta)/2$
	follows 
	as in the proof of Theorem~\ref{thm:VA_bound_nonanalytic}.
\end{proof}

\subsection{Related estimates}
The argument in the proof of Proposition~\ref{prop:CP_inv_bounded}
can be applied 
to bounded $C_0$-semigroups with polynomial decay 
and 
exponentially stable $C_0$-semigroups. The following results
are analogous to the results on Cayley transforms
developed in \cite{Pritchard2024}. 
\begin{proposition}
	\label{prop:decay_poly_exp}
	Let $-A$ be the generator of a bounded $C_0$-semigroup $(e^{-tA})_{t \geq 0}$ 
	on a Hilbert space $H$ such that $0 \in \varrho(A)$.
	Assume that
	$-A^{-1}$ generates a bounded $C_0$-semigroup 
	$(e^{-tA^{-1}})_{t \geq 0}$ on $H$.
	\begin{enumerate}
		\renewcommand{\labelenumi}{\alph{enumi})}
		\item The following statements are equivalent for fixed $\alpha,\beta >0$:
		\begin{enumerate}
			\renewcommand{\labelenumii}{(\roman{enumii})}
			\item  
			$\|e^{-tA}A^{-1}\| = O(t^{-1/\beta})$ as $t \to \infty$.
			\item
			$\|e^{-tA^{-1}}A^{-\alpha}\| = O(t^{-\alpha/(2+\beta)})$ as $t \to \infty$.
		\end{enumerate}
		\item The following statements are equivalent for a fixed $\alpha >0$:
		\begin{enumerate}
			\renewcommand{\labelenumii}{(\roman{enumii})}
			\item  
			$(e^{-tA})_{t \geq 0}$ is exponentially stable.
			\item
			$\|e^{-tA^{-1}}A^{-\alpha}\| = O(t^{-\alpha/2})$ as $t \to \infty$.
		\end{enumerate}
	\end{enumerate}
\end{proposition}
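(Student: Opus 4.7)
The plan is to adapt the proof of Proposition~\ref{prop:CP_inv_bounded} in a direct manner, using the same resolvent identity
\[
(i\eta + A^{-1})^{-1} = \frac{1}{i\eta} + \frac{1}{\eta^2}\left( \frac{1}{i\eta} + A\right)^{-1},\quad \eta \in \mathbb{R} \setminus \{0\},
\]
to transfer spectral information between $A$ and $A^{-1}$ along the imaginary axis. What changes from Proposition~\ref{prop:CP_inv_bounded} is only the control of the resolvent of $A$ at infinity: in part (a) it is the polynomial growth supplied by Theorem~\ref{thm:polynomial_decay}, while in part (b) it is the uniform bound supplied by the Gearhart--Pr\"uss theorem~\ref{thm:GP}.

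For part (a), direction (i) $\Rightarrow$ (ii), I would first invoke \cite[Theorem~6.10]{Batty2016} (playing the same role as in Proposition~\ref{prop:CP_inv_bounded}) to deduce from the decay of $\|e^{-tA}A^{-1}\|$ that $\sigma(A)\cap i\mathbb{R} \subset \{0\}$ and $\|(i\eta+A)^{-1}\| = O(|\eta|^{\beta})$ as $|\eta|\to \infty$; combined with $0 \in \varrho(A)$, this upgrades to $\sigma(A)\cap i\mathbb{R} = \emptyset$ together with a uniform bound in a neighborhood of $0$. Substituting into the identity, the term $\eta^{-2}\|(1/(i\eta)+A)^{-1}\|$ dominates as $\eta \to 0$ and produces $\|(i\eta + A^{-1})^{-1}\| = O(|\eta|^{-(2+\beta)})$, whereas as $|\eta|\to\infty$ it becomes $O(|\eta|^{-2})$ and is dominated by $1/|\eta|$. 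Feeding these two bounds into \cite[Theorem~7.6]{Batty2016} applied to $A^{-1}$, with parameter $2+\beta$ in place of $2-\beta$, yields
\[
\bigl\|e^{-tA^{-1}} A^{-1}(1+A^{-1})^{-1}\bigr\| = O\bigl(t^{-1/(2+\beta)}\bigr),
\]
after which the boundedness of $1+A^{-1}$ and the moment inequality \cite[Lemma~4.2]{Batty2016} deliver (ii). The reverse implication runs the same chain backwards: Lemma~4.2 reduces (ii) to the case $\alpha = 1$, \cite[Theorem~6.10]{Batty2016} applied to $A^{-1}$ extracts the resolvent bounds for $A^{-1}$, the identity transfers them to bounds for $A$, and Theorem~\ref{thm:polynomial_decay} recovers (i).

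Part (b) is simply the $\beta = 0$ endpoint of this argument. By Theorem~\ref{thm:GP}, exponential stability of $(e^{-tA})_{t\geq 0}$ is equivalent to $\sigma(A)\cap i\mathbb{R} = \emptyset$ together with a uniform bound on $\|(i\eta+A)^{-1}\|$ across all of $i\mathbb{R}$. The same identity then gives $\|(i\eta + A^{-1})^{-1}\| = O(|\eta|^{-2})$ as $\eta \to 0$ and $O(|\eta|^{-1})$ as $|\eta|\to \infty$, and \cite[Theorem~7.6]{Batty2016} produces $t^{-1/2}$ decay for $\|e^{-tA^{-1}}A^{-1}(1+A^{-1})^{-1}\|$, which lifts to $t^{-\alpha/2}$ for every $\alpha > 0$ by the moment inequality. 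The converse proceeds exactly as in part (a).

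The main obstacle, such as it is, is the bookkeeping needed to make sure that the singularity exponent of the resolvent of $A^{-1}$ at $\eta = 0$ is matched correctly to the decay exponent produced by \cite[Theorem~7.6]{Batty2016}: exponent $2+\beta$ must give rate $1/(2+\beta)$ in part (a), and exponent $2$ must give rate $1/2$ in part (b). This is just the standard $\tilde\beta \leftrightarrow 1/\tilde\beta$ correspondence used throughout the paper; beyond it no new idea past Proposition~\ref{prop:CP_inv_bounded} is needed.
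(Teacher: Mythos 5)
Your proposal is correct and follows essentially the same route as the paper: the paper also reduces to the resolvent characterizations (Theorem~\ref{thm:polynomial_decay}, respectively the Gearhart--Pr\"uss theorem), transfers them to $A^{-1}$ via the identity $(i\eta+A^{-1})^{-1}=\tfrac{1}{i\eta}+\tfrac{1}{\eta^2}(\tfrac{1}{i\eta}+A)^{-1}$ exactly as in Proposition~\ref{prop:CP_inv_bounded}, applies \cite[Theorems~6.10 and 7.6]{Batty2016}, and handles $\alpha\neq 1$ by \cite[Lemma~4.2]{Batty2016}. The only cosmetic difference is that for the decay-to-resolvent step in part~(a) the paper cites \cite[Theorem~1.1]{Batty2008} and \cite[Theorem~2.4]{Borichev2010} rather than \cite[Theorem~6.10]{Batty2016}, which changes nothing of substance.
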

\begin{proof}
	Let $(e^{-tA})_{t \geq 0}$ be a bounded $C_0$-semigroup
	on a Hilbert space $H$, and let $\beta >0$.
	By \cite[Theorem~1.1]{Batty2008} and 
	\cite[Theorem~2.4]{Borichev2010}, 
	$\|e^{-tA}(1+A)^{-1}\| = O(t^{-1/\beta})$ as $t \to \infty$ if and only if
	$\sigma(A) \cap i \mathbb{R} =
	\emptyset$ and 
	$\|(i\eta + A)^{-1}\| = O(|\eta|^{\beta})$ as $|\eta| \to \infty$.
	Similarly, the Gearhart--Pr\"uss theorem 
	shows that $(e^{-tA})_{t \geq 0}$ is exponentially stable
	if and only if $\sigma(A) \cap i \mathbb{R} =
	\emptyset$ and $\sup_{\eta \in \mathbb{R}} \|(i\eta + A)^{-1}\| < \infty$.
	We obtain the desired results in the case $\alpha = 1$ 
	by
	combining these  equivalences  with the argument used in the proof of 
	Proposition~\ref{prop:CP_inv_bounded}.
	The case $\alpha\not=1$ follows from
	the case $\alpha = 1$ and \cite[Lemma~4.2]{Batty2016}.
\end{proof}

Let $-A$ be the generator of a bounded $C_0$-semigroup
on a Hilbert space such that $\|e^{-tA}(1+A)^{-1}\| = 
O(t^{-1/\beta})$ as $t \to \infty$ for some $\beta >0$. 
Then
$\sigma(A) \cap i \mathbb{R} =\emptyset$ by \cite[Theorem~1.1]{Batty2008}.
It was shown in \cite[Theorem~4]{Wakaiki2024JEE} that 
\[
\|e^{-tA^{-1}}A^{-1}\| = O
\left( \frac{\log t}{t^{1/(2+\beta)}}
\right)\quad \text{as $t \to \infty$}.
\] 
From Proposition~\ref{prop:decay_poly_exp}.a),
we see that the logarithmic term can be omitted
under the additional assumption that
$-A^{-1}$ generates a bounded $C_0$-semigroup.
On the other hand, the implication (i) $\Rightarrow$ (ii) in Proposition~\ref{prop:decay_poly_exp}.b) was proved
without the assumption on $-A^{-1}$ via the $\mathcal{B}$-calculus in \cite[Theorem~2.4]{Wakaiki2023IEOT}.

From Proposition~\ref{prop:decay_poly_exp}.a), 
we obtain the following decay estimate for the 
Crank--Nicolson scheme.
Since it follows from the same argument as 
in Theorem~\ref{thm:CN_CP_Ainv},
we omit the proof.
\begin{proposition}
	\label{prop:CN_polynomial_Ainv}
	Let $-A$ be the generator of 
	a bounded $C_0$-semigroup $(e^{-tA})_{t \geq 0}$
	on a Hilbert space $H$ such that 
	$\|e^{-tA}(1+A)^{-1}\| = O(t^{-1/\beta})$ as $t \to \infty$.
	Assume that $-A^{-1}$ generates
	a  bounded $C_0$-semigroup 
	$(e^{-tA^{-1}})_{t \geq 0}$
	on  $H$.
	Then, for each $\alpha\geq 0$ and 
	$0< \omega_{\min} \leq \omega_{\max} < \infty$,
	there exists a constant $M>0$ such that
	\[
	\left\|
	\left(
	\prod_{k=1}^n V_{\omega_k}(A) \right)
	A^{-\alpha}
	\right\| \leq 
	\frac{M}{n^{\alpha/(2+\beta)}}
	\]
	for all $n \in \mathbb{N}$ and $(\omega_k)_{k=1}^{\infty}
	\in \mathcal{S}(\omega_{\min},\omega_{\max})$.
\end{proposition}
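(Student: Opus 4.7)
The plan is to follow the proof of Theorem~\ref{thm:CN_CP_Ainv} verbatim, substituting each Crandall--Pazy input by its polynomial-decay counterpart. Lemma~\ref{lem:PZ_lemma} depends only on the boundedness of $(e^{-tA})_{t \geq 0}$ and $(e^{-tA^{-1}})_{t \geq 0}$, and so applies here without change; the task reduces to bounding $\langle A^{-\alpha}y, R(r) A^{-\alpha}y \rangle$ at $r = n/(n+1)$, for which one needs separate estimates on the quadratic forms associated with $P(\xi)$ and $Q(\xi)$ defined by \eqref{eq:P_def} and \eqref{eq:Q_def}.

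For the $Q$-form I would first invoke Proposition~\ref{prop:decay_poly_exp}.a), which, combined with the standing hypotheses, yields $\|e^{-tA^{-1}} A^{-\alpha}\| = O(t^{-\alpha/(2+\beta)})$ as $t \to \infty$. Repeating the Laplace-type computation of Lemma~\ref{eq:Q_bound_exp} with exponent $\gamma \coloneqq 2\alpha/(2+\beta) \in [0,1)$ then yields the polynomial-decay analogue
\[
\sup_{0 < \xi < 1} \xi^{1 - 2\alpha/(2+\beta)} \langle x, Q(\xi) x \rangle \leq K_Q \|A^{\alpha} x\|^2, \qquad x \in D(A^{\alpha}),
\]
valid for $0 \leq \alpha < (2+\beta)/2$. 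For the $P$-form, exponential stability is unavailable, so the estimate \eqref{eq:P_estimate} used in Theorem~\ref{thm:CN_CP_Ainv} is insufficient; instead I would invoke \cite[Lemma~4.2]{Batty2016} to upgrade the hypothesis to $\|e^{-tA} A^{-\alpha}\| = O(t^{-\alpha/\beta})$ as $t \to \infty$, and then split $\int_0^\infty e^{-2\xi t}\|e^{-tA}A^{-\alpha}y\|^2\,dt$ at a fixed $t_0$. The tail is controlled by the standard gamma-function identity and produces $\langle A^{-\alpha}y, P(\xi) A^{-\alpha}y \rangle \leq K_P (1 + \psi(\xi)) \|y\|^2$ for $\xi \in (0,1)$, where $\psi(\xi) = \xi^{-(1-2\alpha/\beta)}$, $\log(1/\xi)$, or bounded, according as $2\alpha/\beta$ is less than, equal to, or greater than one. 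Since $1 - 2\alpha/\beta \leq 1 - 2\alpha/(2+\beta)$ in every case, this $P$-contribution is dominated by the $Q$-contribution at $\xi \asymp 1/n$.

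Substituting $x = A^{-\alpha}y$ into Lemma~\ref{lem:PZ_lemma} and choosing $r = n/(n+1)$, so that $\xi_r \asymp 1/n$ and $((n+1) r^n \sqrt{1-r})^{-1} \asymp n^{-1/2}$, the combined bound reads
\[
\left\| \left( \prod_{k=1}^n V_{\omega_k}(A) \right) A^{-\alpha} y \right\|^2 \leq \frac{C}{n} \langle A^{-\alpha}y, R(r) A^{-\alpha}y \rangle = O\bigl(n^{-2\alpha/(2+\beta)} \|y\|^2\bigr),
\]
uniformly for $(\omega_k) \in \mathcal{S}(\omega_{\min}, \omega_{\max})$, which is the desired estimate in the range $0 \leq \alpha < (2+\beta)/2$. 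The remaining range $\alpha \geq (2+\beta)/2$ would then be absorbed by the interpolation argument used at the end of the proof of Theorem~\ref{thm:VA_bound_nonanalytic}: decompose $\alpha = \ell \gamma + \vartheta$ with $\gamma, \vartheta < (2+\beta)/2$, split the product of Cayley transforms into $\ell + 1$ blocks, apply the proven case to each block, and interpolate via the moment inequality. The main obstacle I anticipate is the $P$-bound: without exponential stability, the only way to obtain decay in $\xi$ is to feed the polynomial decay of the orbit of $A^{-\alpha}y$ into $P$, and one must verify in three separate regimes that the resulting $P$-contribution never exceeds the $Q$-contribution.
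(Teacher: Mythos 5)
Your proposal is correct and follows exactly the route the paper intends: the paper omits the proof of Proposition~\ref{prop:CN_polynomial_Ainv}, stating only that it ``follows from the same argument as in Theorem~\ref{thm:CN_CP_Ainv}'', and your argument is precisely that adaptation (Lemma~\ref{lem:PZ_lemma} with $x=A^{-\alpha}y$, the $Q$-estimate via Proposition~\ref{prop:decay_poly_exp}.a) and the computation of Lemma~\ref{eq:Q_bound_exp} with $\gamma = 2\alpha/(2+\beta)$, the choice $r=n/(n+1)$, and the block-splitting for $\alpha \geq (2+\beta)/2$). You have also correctly identified and resolved the one point where the adaptation is not literal --- the uniform bound \eqref{eq:P_estimate} on the $P$-form is unavailable without exponential stability, so one must instead feed $\|e^{-tA}A^{-\alpha}\| = O(t^{-\alpha/\beta})$ into $\langle A^{-\alpha}y, P(\xi)A^{-\alpha}y\rangle$ and verify in each regime that the resulting contribution is dominated by the $Q$-contribution at $\xi \asymp 1/n$.
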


Proposition~\ref{prop:CN_polynomial_Ainv} is an extension
of \cite[Proposition~5.(a)]{Wakaiki2024JEE}, which was restricted to the case where $A$ is a normal
operator.
Proposition~\ref{prop:decay_poly_exp}.b) also
yields a similar decay estimate for  
the Crank--Nicolson scheme. However,
a stronger result, which does not require the 
assumption
on the semigroup-generation property of 
$-A^{-1}$, was obtained via the $\mathcal{B}$-calculus in \cite[Theorem~2]{Wakaiki2024JEE}.

Finally, we show
an equivalence result on Cayley transforms for the Crandall--Pazy class of $C_0$-semigroups, which is analogous to the result derived in \cite{Pritchard2024} for 
bounded $C_0$-semigroups with polynomial decay. 

\begin{theorem}
	\label{thm:relation_poly_decay}
	Let $-A$ be the generator of a $C_0$-semigroup $(e^{-tA})_{t \geq 0}$
	on a Hilbert space such that 
	$\sigma(A) \cap i \mathbb{R} = \emptyset$.
	Let $\omega \in \varrho(-A) \cap (0,\infty)$ and
	assume that 
	$\sup_{n \in \mathbb{N}_0} \|V_{\omega}(A)^n\| < \infty$.
	Then the following statements are equivalent for a fixed $\beta \in (0,1]$:
	\begin{enumerate}
		\renewcommand{\labelenumi}{(\roman{enumi})}
		\item$(e^{-tA})_{t \geq 0}$ is in the Crandall--Pazy class with parameter $\beta$.
		\item $\|V_{\omega}(A)^n(\omega + A)^{-1}\| = O(n^{-1/(2-\beta)})$ as $n \to \infty$.
	\end{enumerate}
\end{theorem}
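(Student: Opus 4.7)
The plan is to encode each of (i) and (ii) as a resolvent/decay condition on the corresponding operator, and to transfer between the two encodings via the standard Cayley algebraic identity. Set $T:=V_{\omega}(A)$. A direct computation gives $I-T = 2\omega(A+\omega)^{-1}$, so statement (ii) is equivalent to
\[
\|T^n(I-T)\| = O\bigl(n^{-1/(2-\beta)}\bigr)\qquad (n\to\infty).
\]
Since $\sigma(A)\cap i\mathbb{R} = \emptyset$, Theorem~\ref{thm:decay_growth} tells us that (i) is equivalent to
$\sup_{\eta\in\mathbb{R}}(1+|\eta|)^{\beta}\|(i\eta+A)^{-1}\| < \infty$.
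The bridge between these two encodings is the algebraic identity
\[
(\lambda I - T)^{-1} = \frac{1}{\lambda - 1}\bigl[I-(\mu-\omega)(A+\mu)^{-1}\bigr], \qquad \mu := \omega\frac{\lambda+1}{\lambda-1},
\]
combined with the M\"obius correspondence $\lambda \in \mathbb{T}\setminus\{1\} \leftrightarrow \mu \in i\mathbb{R}$ and $|\lambda|>1 \leftrightarrow \re \mu > 0$, under which $\lambda\to 1$ corresponds to $|\mu|\to \infty$ with $|\lambda-1| \asymp 1/|\mu|$.

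For the implication (i)$\Rightarrow$(ii), the resolvent estimate for $A$ plugged into the identity above yields $\|(\lambda I - T)^{-1}\| = O(|\lambda-1|^{-(2-\beta)})$ as $\lambda\to 1$ along the unit circle, and the power-boundedness of $T$ (together with a radial Taylor expansion argument) extends this estimate to $|\lambda|\geq 1$ near $1$. A Cayley spectral-mapping argument using $\sigma(A)\cap i\mathbb{R} = \emptyset$ gives $\sigma(T)\cap \mathbb{T}\subset\{1\}$. A discrete Hilbert-space analogue of the Borichev--Tomilov theorem (a transference principle for power-bounded operators, dual to the continuous version that underlies Theorem~\ref{thm:polynomial_decay}) then converts this resolvent estimate into the decay $\|T^n(I-T)\| = O(n^{-1/(2-\beta)})$. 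The converse (ii)$\Rightarrow$(i) is entirely symmetric: the converse half of the discrete Borichev--Tomilov theorem produces the required resolvent estimate for $T$ near $1$; solving the algebraic identity for $(A+\mu)^{-1}$ in the form
\[
(A+\mu)^{-1} = \frac{(\lambda - 1)(\lambda I - T)^{-1} - I}{\omega - \mu}
\]
and specializing to $\lambda = e^{i\theta}$ with $\mu = -i\omega\cot(\theta/2) \in i\mathbb{R}$ yields $\|(i\eta + A)^{-1}\| = O(|\eta|^{-\beta})$ as $|\eta|\to\infty$, and Theorem~\ref{thm:decay_growth} then delivers (i).

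The main obstacle I anticipate is supplying the correct form of the discrete Borichev--Tomilov theorem and verifying its hypotheses in our non-sectorial setting: in particular, the resolvent estimate for $T$ off the unit circle (not merely on it), and the spectral inclusion $\sigma(T)\cap \mathbb{T}\subset\{1\}$, which must be extracted from our hypotheses rather than from a standard spectral mapping theorem for the unbounded operator $A$. Should an off-the-shelf discrete transference result in exactly the form required prove unavailable, the backup is to reprove it directly by combining a contour/Cauchy representation for $T^n(I-T)$ around $\sigma(T)\subset \overline{\mathbb{D}}$ with the Plancherel-style estimates that drove the proofs of Lemma~\ref{lem:decay2resol_int} and \cite[Theorem~4.7]{Batty2016}.
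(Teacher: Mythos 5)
Your proposal is correct and follows essentially the same route as the paper: reduce to the Cayley transform $T=V_\omega(A)$, use the M\"obius correspondence between $(i\eta+A)^{-1}$ and $(\lambda-T)^{-1}$ with $|\lambda-1|\asymp 1/|\eta|$ together with $\sigma(T)\cap\mathbb{T}\subset\{1\}$, characterize (i) via Theorem~\ref{thm:decay_growth}, and invoke the quantified Katznelson--Tzafriri theorem on Hilbert spaces (the ``discrete Borichev--Tomilov'' result you anticipate, available off the shelf as \cite[Theorem~3.10]{Seifert2016}) in both directions. The only reassurance worth adding is that this discrete transference result needs the resolvent estimate only on the unit circle for a power-bounded operator, so the off-circle extension you flag as a potential obstacle is not actually required.
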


\begin{proof}
	Define $B \coloneqq A/\omega$. Then $V_{\omega}(A) = V_1(B)$. It is enough
	to show the equivalence of  (i) and (ii), replacing $A$ with $B$ and $\omega$
	with $1$. Routine calculations show that for all $\eta \in \mathbb{R}$, 
	\begin{equation}
		\label{eq:spectrum_CT}
		i \eta \in \sigma(B) \quad \Leftrightarrow \quad \frac{i \eta - 1}{i \eta + 1}
		\in \sigma\big(V_1(B)\big)
	\end{equation}
	and that if $i\eta \in \sigma(B)$, then
	\begin{equation}
		\label{eq:eta_V_resol}
		\left(
		\frac{i \eta - 1}{i\eta + 1} - V_1(B)
		\right)^{-1} = \frac{1}{2}(i\eta + 1) \left(-1 + (i\eta + 1) (i \eta - B)^{-1}\right).
	\end{equation}
	We write $\mathbb{T} \coloneqq \{ z \in \mathbb{C}: |z| = 1 \}$.  
	From \eqref{eq:spectrum_CT} and the assumption that $\sigma(A) \cap i \mathbb{R} = \emptyset$, we have that 
	$\sigma (V_1(B)) \cap \mathbb{T} \subset \{1 \}$.
	
	Proof of (i) $\Rightarrow$ (ii).
	If $\sigma (V_1(B)) \cap \mathbb{T} = \emptyset$, then
	the spectrum radius of $V_1(B)$ is less than one, and hence
	statement~(ii) immediately follows.
	
	Suppose that $\sigma (V_1(B)) \cap \mathbb{T} = \{1 \}$.
	Since $\| (i \eta - B)^{-1}  \| = O (|\eta|^{-\beta})$ 
	as $|\eta| \to \infty$ by Theorem~\ref{thm:decay_growth},
	we have from \eqref{eq:eta_V_resol} that
	\begin{equation}
		\label{eq:resol_continuous}
		\left\|
		\left(
		\frac{i \eta - 1}{i\eta + 1} - V_1(B)
		\right)^{-1}
		\right\| = O(|\eta|^{2-\beta})
	\end{equation}
	as $|\eta| \to \infty$.
	For $\eta \in \mathbb{R} \setminus \{ 0\}$,
	define $\theta \in (-\pi, \pi)$ by
	\[
	\theta \coloneqq \arg \left( \frac{i \eta - 1}{i\eta + 1} \right).
	\]
	Then
	\[
	e^{i \theta} = \frac{i \eta - 1}{i\eta + 1}.
	\]
	Since
	\[
	| \theta | = \pi - 2|\arctan \eta|,
	\]
	there exist $C_1,C_2,\delta  >0$ such that 
	for all $\eta \in \mathbb{R}$ satisfying $|\eta| \geq \delta$,
	\begin{equation}
		\label{eq:arg_bounds}
		\frac{C_1}{|\eta|}
		\leq
		| \theta | \leq 
		\frac{C_2}{|\eta|}.
	\end{equation}
	Using \eqref{eq:resol_continuous} and 
	\eqref{eq:arg_bounds}, we obtain
	\begin{equation}
		\label{eq:resol_discrete}
		\big\|\big(e^{i \theta} - V_1(B)\big)^{-1} \big\| = O
		\left(
		\frac{1}{
			|\theta|^{2-\beta}}
		\right)
	\end{equation}
	as $\theta \to 0$.
	Hence, $\|V_1(B)^n(1+B)^{-1}\| = O(n^{-1/(2-\beta)})$ as $n \to \infty$ 
	by \cite[Theorem~3.10]{Seifert2016}.

	Proof of (ii) $\Rightarrow$ (i).
	From \cite[Theorem~3.10]{Seifert2016},
	we obtain \eqref{eq:resol_discrete} in the case $\sigma (V_1(B)) \cap \mathbb{T} = \{1 \}$.
	Also, \eqref{eq:resol_discrete} is satisfied in the case
	$\sigma (V_1(B)) \cap \mathbb{T} = \emptyset$, since
	the spectrum radius of $V_1(B)$ is less than one.
	Hence, we derive
	\eqref{eq:resol_continuous} via \eqref{eq:arg_bounds}, and \eqref{eq:eta_V_resol} yields
	\[
	\left\|
	(i\eta + 1)^2 (i \eta - B)^{-1}
	\right\| \leq 
	2
	\left\|
	\left(
	\frac{i \eta - 1}{i\eta + 1} - V_1(B)
	\right)^{-1}
	\right\| + |i\eta + 1| = O(|\eta|^{2-\beta})
	\]
	as $|\eta| \to \infty$. This implies that $\|(i\eta - B)^{-1} \| = O(|\eta|^{-\beta})$
	as $|\eta| \to \infty$. Thus, $(e^{-tB})_{t \geq 0}$ is in the Crandall--Pazy class with parameter $\beta$ by Theorem~\ref{thm:decay_growth}.
\end{proof}

\appendix
\renewcommand\thetheorem{\Alph{section}.\arabic{theorem}}
\section{Proof of 
	Theorem~\ref{thm:CN_holomorphic} via
	the $\mathcal{D}$-calculus}
In this appendix, we give an alternative proof of Theorem~\ref{thm:CN_holomorphic} 
based on the $\mathcal{D}$-calculus.
\subsection{Background material on the $\mathcal{D}$-calculus}
We begin by recalling some preliminaries on the $\mathcal{D}$-calculus;
see \cite{Batty2023} for further details.
For $s > -1$, let $\mathcal{D}_s$ be the space of 
all holomorphic functions $f$ on $\mathbb{C}_+$ such that 
\[
\|f\|_{\mathcal{D}_{s,0}} \coloneqq
\int_0^{\infty}\xi^s
\int_{-\infty}^{\infty}
\frac{|f'(\xi+i \eta)|}{(\xi^2 + \eta^2)^{(s+1)/2}} d\xi d\eta < \infty.
\]
For all $s > -1$, we have
\[
\|f\|_{\mathcal{D}_{s,0}} =
\int_{-\pi/2}^{\pi/2}\cos^s\theta
\int_{0}^{\infty} |f'(r e^{i\theta})|
dr  d\theta.
\]
If $\sigma \geq s > -1$, then
$\mathcal{D}_s \subset \mathcal{D}_{\sigma}$
and 
$\|f\|_{\mathcal{D}_{\sigma,0}}  \leq \|f\|_{\mathcal{D}_{s,0}} $.
If $f \in \mathcal{D}_s$ for some $s > -1$, then
\[
f(\infty) \coloneqq \lim_{z \to \infty,\, z \in \Sigma_{\theta}} f(z)
\]
exists in $\mathbb{C}$ for all $\theta \in (0,\pi/2)$. 
For each $s > -1$, the space $\mathcal{D}_s$ equipped with the norm
\[
\|f\|_{\mathcal{D}_s} \coloneqq |f(\infty)| + \|f\|_{\mathcal{D}_{s,0}},\quad 
f \in \mathcal{D}_s,
\]
is a Banach space. 
If $s > 0$, then 
$\mathcal{LM}
\subset \mathcal{D}_s
$ and $\|f\|_{\mathcal{D}_s} \leq C \|f\|_{\textrm{HP}}$ for all 
$f \in \mathcal{LM}$ and some $C>0$.

Let $A \in \Sect(\pi/2-)$, and define
\[
M_A \coloneqq \sup_{z \in \mathbb{C}_+} \|z(z+A)^{-1}\|.
\]
Let $s > -1$ and $f \in \mathcal{D}_s$. 
We define a linear operator $f_{\mathcal{D}_s}(A)$
on $X$ by
\begin{align*}
	f_{\mathcal{D}_s}(A) \coloneqq 
	f(\infty) -
	\frac{2^s}{\pi}
	\int^{\infty}_0 \xi^s \int_{-\infty}^{\infty}
	f'(\xi+i\eta)(A+\xi-i\eta)^{-(s+1)} d\eta d\xi,
\end{align*}
where the integral is absolutely convergent in the operator norm.
The map $f \mapsto f_{\mathcal{D}_s}(A)$
is bounded from $\mathcal{D}_s$ to $\mathcal{L}(X)$. Indeed, we have
\begin{equation}
	\label{eq:fDs_bound}
	\|f_{\mathcal{D}_s}(A)\| \leq |f(\infty)| + \frac{2^s M_A^{\lceil s+ 1 \rceil}}{\pi}
	\|f\|_{\mathcal{D}_{s,0}},
\end{equation}
where $\lceil a \rceil$ is the smallest integer 
greater than or equal to $a \in \mathbb{R}$.
Moreover, $f_{\mathcal{D}_\sigma}(A) = f_{\mathcal{D}_s}(A)$ for $\sigma \geq s$.

Define
\[
\mathcal{D}_{\infty} \coloneqq \bigcup_{s > -1} \mathcal{D}_s.
\]
Then $\mathcal{D}_{\infty}$ is an algebra. For $A \in \Sect(\pi/2-)$,
we derive the following
algebra homomorphism:
\[
\Psi_A\colon \mathcal{D}_{\infty} \mapsto \mathcal{L}(X),\quad
\Psi_A(f) = f_{\mathcal{D}_s}(A),\quad 
f \in \mathcal{D}_s,\, s > -1.
\]
In addition, $\Psi_A$ is the unique algebra homomorphism
from $\mathcal{D}_{\infty}$ to $\mathcal{L}(X)$
that satisfies the following two properties:
\begin{enumerate}
	\renewcommand{\labelenumi}{\alph{enumi})}
	\item $\Psi_A((\lambda+ \cdot \,)^{-1}) = (\lambda + A)^{-1} $
	for all $\lambda \in \mathbb{C}_+$.
	\item 
	For each $s>-1$, there exists a constant $C_s(A)>0$
	such that for all $f \in \mathcal{D}_s$,
	\[
	\|\Psi_A(f)\| \leq |f(\infty)| + C_s(A) \|f\|_{\mathcal{D}_s}.
	\]
\end{enumerate} 
We call $ \Psi_A$  
the  {\em $\mathcal{D}$-calculus} for $A$.
If
$f \in \mathcal{LM}$, then $\Psi_A(f)= \Pi_A(f)$, where $\Pi_A$ is 
the Hillle-Phillips calculus defined as in Section~\ref{sec:preliminaries_B_calc}.
In the appendix, $f(A)$ refers to the $\mathcal{D}$-calculus.

\subsection{Decay estimate for Crank--Nicolson schemes}
\label{sec:CN_scheme}

Let $n \in \mathbb{N}$, $\alpha \geq 0$, and $0<c <\omega_{\min}\leq \omega_{\max} < \infty$. 
Define
\begin{equation}
	\label{eq:fnaw_def}
	f_{n,\alpha,(\omega_k)}(z) \coloneqq
	\frac{1}{(z+c+\omega_{\max})^{\alpha}} 
	\prod_{k=1}^n
	\frac{z+c-\omega_k}{z+c+\omega_k},\quad z \in \mathbb{C}_+,
\end{equation}
where $(\omega_k)_{k\in \mathbb{N}} \in \mathcal{S}(\omega_{\min},\omega_{\max})$.
Note that the function $f_{n,\alpha,(\omega_k)}$ defined here 
is slightly different from the one defined in Section~\ref{sec:CP_CN_decay}; see \eqref{eq:fnaw_def_CP}. The difference lies in the use of  $\omega_{\max}$ instead of
$\omega_{\min}$.

To prove Theorem~\ref{thm:CN_holomorphic} via the $\mathcal{D}$-calculus,
we employ the following estimate for $f_{n,\alpha,(\omega_k)} $.
\begin{proposition}
	\label{prop:fnaw_bound}
	Let $s>\alpha\geq 0$ and $0 < c < \omega_{\min} \leq  \omega_{\max} < \infty$.
	There exists a constant $M >0$  such that 
	the function $f_{n,\alpha,(\omega_k)}$ defined by \eqref{eq:fnaw_def}
	satisfies
	\begin{equation}
		\label{eq:f_norm}
		\big\|
		f_{n,\alpha,(\omega_k)}
		\big\|_{\mathcal{D}_{s,0}}
		\leq \frac{M}{n^{\alpha}}
	\end{equation}
	for all $n \in \mathbb{N}$ and 
	$(\omega_k)_{k=1}^{\infty} \in \mathcal{S}(\omega_{\min},\omega_{\max})$.
\end{proposition}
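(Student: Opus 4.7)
The plan is to establish the bound via the polar representation of the $\mathcal{D}_{s,0}$-norm,
\[
\|f\|_{\mathcal{D}_{s,0}} = \int_{-\pi/2}^{\pi/2} \cos^s\theta \int_0^{\infty} |f_{n,\alpha,(\omega_k)}'(re^{i\theta})| \, dr \, d\theta,
\]
and to estimate the inner ray integral $I(\theta) \coloneqq \int_0^\infty |f_{n,\alpha,(\omega_k)}'(re^{i\theta})|\, dr$ uniformly in the parameters before performing the angular integration. The preliminary computation is the logarithmic derivative
\[
\frac{f_{n,\alpha,(\omega_k)}'(z)}{f_{n,\alpha,(\omega_k)}(z)} = -\frac{\alpha}{z+c+\omega_{\max}} + \sum_{k=1}^n \frac{2\omega_k}{(z+c-\omega_k)(z+c+\omega_k)},
\]
combined with the fact that the Blaschke-type factor $B(z) \coloneqq \prod_k(z+c-\omega_k)/(z+c+\omega_k)$ satisfies $|B(z)|\le 1$ on $\mathbb{C}_+$, so that $|f_{n,\alpha,(\omega_k)}(z)| \le |z+c+\omega_{\max}|^{-\alpha}$. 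Sharper control comes from the identity $|B(z)|^2 = \prod_k(1 - 4\omega_k(\re z + c)/|z+c+\omega_k|^2)$, which makes $|f|$ exponentially small in $n$ on compact subsets of $\mathbb{C}_+$ bounded away from the zero set $\{\omega_k - c\}$.

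Next I would analyze $I(\theta)$ by locating the peak of $r \mapsto |f_{n,\alpha,(\omega_k)}(re^{i\theta})|$, which the special case $\omega_k \equiv \omega$ shows to have magnitude of order $n^{-\alpha}$ attained at $r \sim n$. For that special case the substitution $v = (r+c-\omega)/(r+c+\omega) \in (0,1)$ converts $\int_0^\infty |f'(r)|\,dr$ into a beta-type integral of the form $\int_0^1 v^{n-1}(\text{polynomial in }(1-v))\,dv$, yielding the sharp $n^{-\alpha}$ rate. I would handle the general case by splitting the ray into three regimes — a small-$r$ regime where the Blaschke decay dominates, a peak regime of width proportional to $n$ around $r \sim n$, and a large-$r$ regime where the prefactor $|z+c+\omega_{\max}|^{-\alpha}$ provides the decay — and then use the bound $|B_k(z)|\le 1$ for the $(n-1)$-Blaschke product $B_k \coloneqq B/((z+c-\omega_k)/(z+c+\omega_k))$ to reduce the sum term to integrals that are individually integrable against the ray measure and can be combined, exploiting the hypothesis $c<\omega_{\min}$ to keep the factors $|z+c+\omega_k|$ uniformly comparable to $|z+c+\omega_{\max}|$.

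The final step is to integrate the resulting bound $I(\theta) \le C(\theta) n^{-\alpha}$ over $\theta \in (-\pi/2,\pi/2)$ against the weight $\cos^s\theta$. Near $\theta = 0$ the ray passes close to the zeros of $B$ on the positive real axis, but integrability there follows from the fact that the peak value is already of order $n^{-\alpha}$. Near $\theta = \pm\pi/2$ the ray approaches the imaginary axis and $C(\theta)$ may blow up like $\cos^{-\alpha}\theta$ from the algebraic factor $(z+c+\omega_{\max})^{-\alpha}$; the condition $s > \alpha$ is precisely what is needed to make $\int_{-\pi/2}^{\pi/2} \cos^{s-\alpha}\theta\,d\theta < \infty$.

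I expect the main obstacle to be the middle step: establishing the $n^{-\alpha}$ scaling of $I(\theta)$ \emph{uniformly} in $(\omega_k) \in \mathcal{S}(\omega_{\min},\omega_{\max})$, since the ray-wise behavior of $B$ is qualitatively different when the $\omega_k$ cluster versus when they spread across $[\omega_{\min},\omega_{\max}]$. The delicate point is that naive bounds obtained by pulling $|B|\le 1$ or $|B_k|\le 1$ out of the sum yield only $O(n)$-growth, so one must either retain enough Blaschke decay in each summand or work directly with the total variation of $f_{n,\alpha,(\omega_k)}$ along each ray, exploiting the product structure and the substitution above to produce the desired $n^{-\alpha}$ factor.
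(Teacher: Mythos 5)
Your setup is sound and matches the paper's framework: the polar form of $\|\cdot\|_{\mathcal{D}_{s,0}}$, the product-rule expansion of $f_{n,\alpha,(\omega_k)}'$ into the $\alpha$-term and the sum over $\ell$ with the $(n-1)$-fold product $B_\ell$, the beta-function evaluation of the ray integral in the constant-$\omega$ case, and the observation that $s>\alpha$ is exactly what absorbs the $\cos^{-\alpha}\theta$ (in fact $\cos^{-(\alpha+1)}\theta$) blow-up near $\theta=\pm\pi/2$. You also correctly diagnose that bounding $|B_\ell|\le 1$ in each summand only gives $O(n)$. But that diagnosis is where your argument stops: the step you flag as ``the main obstacle'' --- getting the $n^{-\alpha}$ scaling of the ray integral uniformly over $(\omega_k)\in\mathcal{S}(\omega_{\min},\omega_{\max})$ --- is precisely the content of the proposition, and you do not supply a mechanism for it. A proof must retain quantitative decay of the $(n-1)$ remaining Blaschke factors in every summand, uniformly in the ray angle and in the sequence $(\omega_k)$, and your three-regime/peak-location sketch does not produce such a bound.

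The paper closes this gap with a single elementary inequality (Lemma~\ref{lem:hol1}): for $|b|\le a$ and $\delta=\cos\theta\in(0,1]$ one has $(r^2+2\delta br+b^2)/(r^2+2\delta ar+a^2)\le\big((r+\delta|b|)/(r+\delta a)\big)^2$, so each factor $\big|(re^{i\theta}+c-\omega_k)/(re^{i\theta}+c+\omega_k)\big|$ is dominated by the real ratio $\big(r+\delta(\omega_{\max}-c)\big)/\big(r+\delta(\omega_{\max}+c)\big)<1$, which is \emph{monotone in $\omega_k$} and hence independent of the particular sequence. After this reduction every summand is bounded by $C\big(r+\delta(\omega_{\max}-c)\big)^{n-1}/\big(r+\delta(\omega_{\max}+c)\big)^{n+\alpha+1}$, whose ray integral is $O\big((\delta n)^{-(\alpha+1)}\big)$ by the beta-function estimate (Lemma~\ref{lem:tc_int_bound}, with gap $c-d=2c\delta$); multiplying by the $n$ summands gives $O(\delta^{-(\alpha+1)}n^{-\alpha})$, and the angular integration against $\cos^s\theta$ then converges because $s>\alpha$. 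Without this pointwise ray-wise domination (or an equivalent device retaining geometric decay in each summand), your outline does not yield the claimed uniform $n^{-\alpha}$ bound, so as written the proposal has a genuine gap at its central step.
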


We need two technical lemmas for the proof of 
Proposition~\ref{prop:fnaw_bound}.
\begin{lemma}
	\label{lem:hol1}
	Let $a,b \in \mathbb{R}$ satisfy $|b| \leq a$, and
	let $0 \leq \delta \leq 1$.
	Then,
	for all $r >0$,
	\[
	\frac{
		r^2+2\delta br + b^2
	}{r^2+2\delta ar + a^2} 
	\leq 
	\left( 
	\frac{r+\delta |b|}{r+\delta a}
	\right)^2.
	\]
\end{lemma}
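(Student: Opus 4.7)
The plan is to reduce the statement to the case $b \geq 0$, then exploit a useful algebraic identity to reduce the claim to a simple monotonicity statement on $[0,\infty)$.

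First, since $b \leq |b|$ and $2\delta r \geq 0$, the numerator on the left-hand side satisfies $r^2 + 2\delta b r + b^2 \leq r^2 + 2\delta |b|\, r + |b|^2$. Setting $c \coloneqq |b|$, it therefore suffices to prove
\[
\frac{r^2 + 2\delta c r + c^2}{r^2 + 2\delta a r + a^2} \leq \left(\frac{r + \delta c}{r + \delta a}\right)^2
\]
under the assumption $0 \leq c \leq a$ (with $r > 0$ and $0 \leq \delta \leq 1$). The case $a = 0$ forces $c = 0$ and both sides equal $1$, so I may assume $a > 0$, in which case all denominators are strictly positive.

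Next, I would use the identity
\[
r^2 + 2\delta t r + t^2 = (r + \delta t)^2 + (1 - \delta^2)\, t^2,
\]
which is immediate from expanding $(r + \delta t)^2 = r^2 + 2\delta t r + \delta^2 t^2$. Defining $G(t) \coloneqq (r^2 + 2\delta t r + t^2)/(r + \delta t)^2$ for $t \geq 0$, this identity gives
\[
G(t) = 1 + (1 - \delta^2)\left(\frac{t}{r + \delta t}\right)^2.
\]
Since $t/(r + \delta t) = 1/(r/t + \delta)$ is non-decreasing in $t$ on $(0, \infty)$ and $1 - \delta^2 \geq 0$, the function $G$ is non-decreasing on $[0,\infty)$. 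In particular $G(c) \leq G(a)$, which, after cross-multiplying by $(r+\delta a)^2 > 0$, is exactly the reduced inequality above.

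There is no serious obstacle here; the argument is essentially a one-line computation once the right identity is spotted. The only care needed is the initial reduction $b \mapsto |b|$, which is valid because $|b| \leq a$ so the passage $r^2 + 2\delta b r + b^2 \leq r^2 + 2\delta|b|\, r + |b|^2$ preserves the bound $|b| \leq a$ used in the monotonicity step.
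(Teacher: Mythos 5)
Your proof is correct, and it takes a genuinely different route from the paper's. The paper proves the cross-multiplied inequality $(r+\delta |b|)^2(r^2+2\delta ar+a^2) \geq (r+\delta a)^2(r^2+2\delta br+b^2)$ directly, by expanding the difference as a cubic $c_1 r + c_2 r^2 + c_3 r^3$ in $r$ and checking that each coefficient is nonnegative (the sign of $b$ is absorbed into terms like $|b|-b$ inside the coefficients). You instead first reduce to $b\geq 0$ via $2\delta b r \leq 2\delta |b| r$, and then use the identity $r^2+2\delta tr+t^2=(r+\delta t)^2+(1-\delta^2)t^2$ to rewrite the claim as the monotonicity of $G(t)=1+(1-\delta^2)\bigl(t/(r+\delta t)\bigr)^2$ on $[0,\infty)$, which is immediate. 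Your argument is shorter and more conceptual — it makes transparent why the hypotheses $|b|\leq a$ and $0\leq\delta\leq 1$ enter (the former through monotonicity in $t$, the latter through $1-\delta^2\geq 0$) — whereas the paper's computation is entirely mechanical but requires no preliminary reduction. Both are complete; the only point worth keeping in mind, which you do address, is that the initial replacement $b\mapsto |b|$ only touches the numerator, so the target inequality is literally unchanged on the right-hand side.
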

\begin{proof}
	Let $r >0$. It is enough to show that 
	\begin{equation}
		\label{eq:sec_func_bound}
		(r+\delta |b|)^2
		(r^2+2\delta ar + a^2) \geq
		(r+\delta a)^2
		(r^2+2\delta br + b^2).
	\end{equation}
	A routine calculation shows that
	\begin{align*}
		(r+\delta |b|)^2
		(r^2+2\delta ar + a^2) -
		(r+\delta a)^2
		(r^2+2\delta br + b^2) 
		=
		c_1r + c_2 r^2  +  c_3 r^3,
	\end{align*}
	where
	\begin{align*}
		c_1 &\coloneqq 2\delta a \big( (|b| - \delta^2 b)a-(1-\delta^2)b^2\big), \\
		c_2 &\coloneqq (1-\delta^2) (a^2-b^2) + 4\delta^2a(|b| - b), \\
		c_3 &\coloneqq 2\delta (|b| - b).
	\end{align*}
	From $0 \leq |b| \leq a$ and $0 \leq \delta \leq 1$, we 
	immediately see that $c_2$ and $c_3$ are non-negative.
	Since 
	\[
	|b| - \delta^2b \geq (1-\delta^2)|b|,
	\]
	we also have that
	\[
	c_1
	\geq 
	2\delta (1-\delta^2) a|b| (a-|b|) \geq 0.
	\]
	Thus, the estimate \eqref{eq:sec_func_bound} holds.
\end{proof}

\begin{lemma}
	\label{lem:tc_int_bound}
	For each $\alpha >0$, there exists a constant $M>0$
	such that 
	\begin{equation}
		\label{eq:tc_int_bound}
		\int_0^{\infty} \frac{(r+d)^n}{(r+c)^{n+\alpha+1}} dr \leq 
		\frac{M}{(c- d)^{\alpha}n^{\alpha}}
	\end{equation}
	for all $n \in \mathbb{N}$ and $c > d \geq 0$.
\end{lemma}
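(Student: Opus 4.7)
The plan is to reduce the integral to a standard beta/gamma-type integral by successive substitutions, and then use the elementary inequality $(1-w/n)^n\leq e^{-w}$ on $[0,n]$ to obtain a dimensionless bound.

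First I would make the shift $u = r+c$, which converts the integral into
\[
\int_c^{\infty} \frac{(u-(c-d))^n}{u^{n+\alpha+1}}\, du,
\]
and then perform the reciprocal substitution $v = (c-d)/u$, so that $u - (c-d) = (c-d)(1-v)/v$ and $du = -(c-d)v^{-2}\,dv$. A direct computation shows that the factor $(c-d)^n$ from the numerator combines with $(c-d)^{-(n+\alpha+1)}$ from the denominator and the Jacobian to leave exactly $(c-d)^{-\alpha}$, giving
\[
\int_0^{\infty}\frac{(r+d)^n}{(r+c)^{n+\alpha+1}}\,dr
= \frac{1}{(c-d)^{\alpha}} \int_0^{(c-d)/c} (1-v)^n v^{\alpha-1}\, dv.
\]
Note that the upper limit $(c-d)/c$ lies in $[0,1)$ because $c>d\geq 0$, so the integrand is well defined throughout.

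Next I would rescale by $w = nv$, which transforms the right-hand side into
\[
\frac{1}{(c-d)^{\alpha}\,n^{\alpha}} \int_0^{n(c-d)/c} (1-w/n)^n w^{\alpha-1}\, dw.
\]
Since the upper limit $n(c-d)/c$ does not exceed $n$, the standard inequality $(1-w/n)^n \leq e^{-w}$ for $0\leq w\leq n$ (an immediate consequence of $\log(1-x)\leq -x$) applies on the whole interval of integration, and extending the domain to $[0,\infty)$ yields the uniform bound
\[
\int_0^{n(c-d)/c} (1-w/n)^n w^{\alpha-1}\, dw \leq \int_0^{\infty} e^{-w} w^{\alpha-1}\, dw = \Gamma(\alpha).
\]
Taking $M \coloneqq \Gamma(\alpha)$ then gives the desired estimate \eqref{eq:tc_int_bound}.

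I do not anticipate any genuine obstacle here: the argument is a routine reduction once one spots that the reciprocal substitution converts the rational integrand into a beta-type kernel whose $n$-dependence is concentrated in the factor $(1-w/n)^n$. The only minor point to double-check is that all the powers of $(c-d)$ track correctly through the change of variables, but that is a short computation.
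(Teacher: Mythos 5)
Your proof is correct, and after the change of variables it arrives at exactly the same incomplete beta integral as the paper: the paper's substitution gives $\frac{1}{(c-d)^{\alpha}}\int_{d/(c-d)}^{\infty}\frac{s^n}{(s+1)^{n+\alpha+1}}\,ds$, which under $v=1/(s+1)$ is precisely your $\frac{1}{(c-d)^{\alpha}}\int_0^{(c-d)/c}(1-v)^n v^{\alpha-1}\,dv$. Where the two arguments genuinely differ is in the final step. The paper enlarges the domain to get the complete beta function ${\mathrm B}(n+1,\alpha)$ and then quotes the asymptotic ${\mathrm B}(n,\alpha)=O(n^{-\alpha})$, which it justifies via the gamma-function representation and Stirling's formula. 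You instead rescale by $w=nv$ and use the elementary pointwise bound $(1-w/n)^n\leq e^{-w}$ on $[0,n]$, landing directly on the explicit constant $M=\Gamma(\alpha)$. Your route is more self-contained (no appeal to Stirling) and produces an explicit, clean constant, which is a small gain; the paper's route has the advantage of reusing the beta-function machinery that it also needs elsewhere in the appendix (e.g.\ for $\int_{-\pi/2}^{\pi/2}\cos^{\sigma}\theta\,d\theta$). One cosmetic slip: when $d=0$ the upper limit $(c-d)/c$ equals $1$, not a point of $[0,1)$; this is harmless since $v^{\alpha-1}(1-v)^n$ is integrable on all of $[0,1]$ for $\alpha>0$, and your subsequent bound $n(c-d)/c\leq n$ still holds with equality.
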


\begin{proof}
	Let $\alpha >0$.
	We have
	\begin{equation}
		\label{eq:tc_int_equiv}
		\int_0^{\infty} \frac{(r+d)^n}{(r+c)^{n+\alpha+1}}dr =
		\frac{1}{(c-d)^{\alpha}} \int_{d/(c-d)}^{\infty}
		\frac{r^n}{(r+1)^{n+\alpha+1}} dr.
	\end{equation}
	Moreover,
	\begin{equation}
		\label{eq:lemma_beta_func}
		\int_0^{\infty} \frac{r^n}{(r+1)^{n+\alpha+1}} dr
		=  \int_0^1 
		s^n(1-s)^{\alpha-1} ds = {\mathrm B}(n+1,\alpha),
	\end{equation}
	where ${\mathrm B}$ is the beta function.
	We use the following well-known 
	estimate on the beta function:
	\begin{equation}
		\label{eq:Beta_asymp}
		{\mathrm B}(n,\alpha) = O\left(
		\frac{1}{n^{\alpha}}
		\right)
	\end{equation}
	as $n \to \infty$, which can be obtained from the expression of
	the beta function by the gamma function $\Gamma$
	\[
	{\mathrm B}(s,t) = \frac{\Gamma(s)\Gamma(t)}{\Gamma(s+t)}
	\]
	for $s,t >0$
	(see \cite[item~5.12.1]{Olver2010})
	and Stirling's formula for the gamma function
	\[
	\left|\frac{\Gamma(s)}{
		\sqrt{2\pi/s}\left(
		s/e
		\right)^s
	} - 1 \right|=
	O\left( \frac{1}{s} \right)
	\]
	as $s \to \infty$ (see \cite[item~5.11.3]{Olver2010}.
	Sharper estimates can be found in 
	\cite[Theorem~5]{Gordon1994}). 
	Combining \eqref{eq:tc_int_equiv}--\eqref{eq:Beta_asymp}, 
	we obtain the desired estimate \eqref{eq:tc_int_bound}.
\end{proof}

\begin{proof}[Proof of Proposition~\ref{prop:fnaw_bound}.]
	Let $s > \alpha \geq 0$ and $0 < c < \omega_{\min} \leq 
	\omega_{\max} < \infty$.
	We will give an upper bound of
	\[
	\big\|
	f_{n,\alpha,(\omega_k)}
	\big \|_{\mathcal{D}_{s,0}} =
	\int_{-\pi/2}^{\pi/2} \cos^s\theta
	\int_0^{\infty}
	\big| f_{n,\alpha,(\omega_k)}'(re^{i\theta})\big| 
	dr d\theta
	\]
	for $(\omega_k)_{k\in \mathbb{N}} \in \mathcal{S}( \omega_{\min},\omega_{\max})$.
	The derivative $ f_{n,\alpha,(\omega_k)}'$ is given by
	\begin{align*}
		f_{n,\alpha,(\omega_k)}'(z) 
		&=\frac{-\alpha}{(z+c+ \omega_{\max})^{\alpha+1}}
		\prod_{k=1}^n
		\frac{z+c-\omega_k}{z+c+\omega_k}  +
		\frac{2}{(z+c+ \omega_{\max})^{\alpha}}
		\sum_{\ell=1}^n
		\frac{\omega_{\ell}}{(z+c+\omega_{\ell})^2}
		\prod_{k=1,\,k\not=\ell }^n
		\frac{z+c-\omega_k}{z+c+\omega_k}.
	\end{align*}
	Let $-\pi/2 < \theta < \pi/2$, and 
	set $\delta = \delta (\theta) \coloneqq \cos \theta \in (0,1]$. 
	We have from Lemma~\ref{lem:hol1} that
	\[
	\left|\frac{re^{i\theta }+c-\omega}{re^{i\theta }+c+\omega} \right|^2
	= \frac{r^2+2\delta (c-\omega) r + (c-\omega)^2}{r^2+2\delta (c+\omega)r + (c+\omega)^2} \leq 
	\left( 
	\frac{r+\delta |c-\omega|}{r+\delta (c+\omega)}
	\right)^2
	\]
	for all $r,\omega >0$.
	If $c<\omega \leq \omega_{\max}$, then
	\[
	\frac{r+\delta |c-\omega|}{r+\delta (c+\omega)} 
	\leq \frac{r+\delta (-c + \omega_{\max})}{r+\delta (c+\omega_{\max})}
	\]
	for all $r >0$.
	We also have that 
	\begin{align*}
		\left|
		\frac{1}{\big(re^{i\theta}+ c+\omega_{\max}\big)^{\alpha+1}}
		\right|
		&\leq
		\frac{1}{\big(r+\delta(c+\omega_{\max})\big)^{\alpha+1}}.
	\end{align*}
	If $\omega_{\min} \leq \omega_{\ell} \leq \omega_{\max}$,
	then 
	\begin{align*}
		\frac{\omega_{\ell}}{r+ \delta(c+\omega_{\ell})}
		\leq 
		\frac{\omega_{\max}}{r+ \delta(c+\omega_{\max})}
		\quad \text{and} \quad 
		\frac{r+ \delta(c+\omega_{\max})}{r+ \delta(c+\omega_{\ell})}
		\leq \frac{c+\omega_{\max}}{c+\omega_{\min}}
	\end{align*}
	for all $r >0$,
	and hence
	\begin{align*}
		\left|
		\frac{1}{(re^{i\theta}+ c+\omega_{\max})^{\alpha}}
		\cdot	\frac{\omega_{\ell}}{(re^{i\theta}+c+\omega_{\ell})^2}
		\right|
		&\leq 
		\frac{1}{\big(r+\delta(c+\omega_{\max})\big)^{\alpha}}
		\cdot \frac{\omega_{\ell}}{\big(r+\delta(c+\omega_{\ell})\big)^{2}} \\
		&\leq\frac{C}{\big(r+\delta( c+\omega_{\max})\big)^{\alpha+2}},
	\end{align*}
	where
	\[
	C =
	C(\omega_{\min},\omega_{\max}) \coloneqq
	\frac{\omega_{\max}(c+\omega_{\max})}{c+\omega_{\min}}.
	\]
	From these estimates, we obtain
	\[
	\big| f_{n,\alpha,(\omega_k)}'(re^{i\theta}) \big| \leq 
	\alpha g_{n,\alpha, \omega_{\max}}(r) + 2C n h_{n,\alpha, \omega_{\max}}(r),
	\]
	where
	\begin{align*}
		g_{n,\alpha, \omega_{\max}}(r) &\coloneqq
		\frac{\big(r+\delta (-c+\omega_{\max})\big)^{n}}{\big(r+\delta (c+\omega_{\max})\big)^{n+\alpha+1}},\\
		h_{n,\alpha, \omega_{\max}}(r) &\coloneqq
		\frac{\big(r+\delta (-c+\omega_{\max})\big)^{n-1}}{\big(r+\delta (c+\omega_{\max})\big)^{n+\alpha+1}}.
	\end{align*}
	By Lemma~\ref{lem:tc_int_bound}, 
	there exist constants $ M_g, M_h >0$, depending only on $\alpha$, 
	such that 
	\[
	\int_0^{\infty} \big| f_{n,\alpha,(\omega_k)}'(re^{i\theta}) \big| dr
	\leq \frac{\alpha  M_g}{(2 c \delta )^{\alpha} n^{\alpha}}
	+\frac{2 C M_h}
	{(2 c \delta )^{\alpha+1} n^{\alpha}}
	\]
	for all $n \in \mathbb{N}$ and 
	$(\omega_k)_{k=1}^{\infty} \in \mathcal{S}( \omega_{\min},\omega_{\max})$.
	The beta function ${\mathrm B}$ satisfies
	\[
	{\mathrm B}\left(
	\frac{\sigma+1}{2},\frac{1}{2}
	\right) = 
	\int_{-\pi/2}^{\pi/2} \cos^\sigma \theta d\theta
	\]
	for all $\sigma > -1$;
	see, e.g., \cite[item~5.12.2]{Olver2010}.
	Recalling that $\delta =  \cos \theta$ and $s > \alpha$, 
	we derive
	\[
	\big\|
	f_{n,\alpha,(\omega_k)}
	\big\|_{\mathcal{D}_{s,0}} 
	\leq 
	{\mathrm B}\left(
	\frac{s-\alpha+1}{2},\frac{1}{2}
	\right)
	\frac{\alpha  M_g}{(2c)^{\alpha}n^{\alpha}} + 
	{\mathrm B}\left(
	\frac{s-\alpha}{2},\frac{1}{2}
	\right)
	\frac{2CM_h}{(2c)^{\alpha+1}n^{\alpha}}
	\eqqcolon
	\frac{ M}{n^{\alpha}} 
	\]
	for all $n \in \mathbb{N}$ and 
	$(\omega_k)_{k=1}^{\infty} \in \mathcal{S}( \omega_{\min},\omega_{\max})$.
\end{proof}

Now we are in a position to prove Theorem~\ref{thm:CN_holomorphic} via the 
estimate~\eqref{eq:fDs_bound}.
\begin{proof}[Proof of Theorem~\ref{thm:CN_holomorphic}.]
	By assumption, there exists $c\in(0,\omega_{\min})$ 
	such that $B \coloneqq A-c \in \text{Sect}(\pi/2-)$. Let $s > \alpha \geq 0$.
	Using the function $f_{n,\alpha,(\omega_k)}$ defined by \eqref{eq:fnaw_def},
	we obtain
	\begin{align*}
		f_{n,\alpha,(\omega_k)}(B) &=
		\left(
		\prod_{k=1}^n
		V_{\omega_k}(A)
		\right) (A+ \omega_{\max} )^{-\alpha}.
	\end{align*}
	The estimate \eqref{eq:fDs_bound} yields
	\[
	\big\|f_{n,\alpha,(\omega_k)}(B) \big\| \leq \frac{2^s M_B^{\lceil s+ 1\rceil}}{\pi} \big\|f_{n,\alpha,(\omega_k)}\big\|_{\mathcal{D}_{s,0}},
	\]
	where $M_B \coloneqq \sup_{z \in \mathbb{C}_+} \|z(z+B)^{-1}\|$.
	We also have that 
	\[
	\left\|\left(
	\prod_{k=1}^n
	V_{\omega_k}(A)
	\right)A^{-\alpha}
	\right\| \leq  \|(\omega_{\max}A^{-1} + 1)^{\alpha} \|\,
	\left\|\left(
	\prod_{k=1}^n
	V_{\omega_k}(A)
	\right) (A+ \omega_{\max} )^{-\alpha}\right\|.
	\]
	By Proposition~\ref{prop:fnaw_bound},
	there exists $M>0$ 
	such that the desired estimate \eqref{eq:VA_bound} holds
	for all $n \in \mathbb{N}$ and 
	$(\omega_k)_{k \in \mathbb{N}} \in \mathcal{S}(\omega_{\min},\omega_{\max})$.
\end{proof}
We see from a simple example that
the estimate given in
Proposition~\ref{prop:fnaw_bound} cannot  in general be improved.
\begin{example}
		Let $n \in \mathbb{N}$, $\alpha >0$, and $s>-1$.
		Define
		\[
		f_{n,\alpha}(z) \coloneqq \frac{z^n}{(z+1)^{n+\alpha}},\quad 
		z \in \mathbb{C}_+,
		\]
		which coincides with $f_{n,\alpha,(\omega_k)}$ defined by \eqref{eq:fnaw_def}
		if $c = \omega_{\min} = \omega_{\max} = 1/2$.
		Using \cite[Theorem~4.8.(iii) and Lemma~4.13.(ii)]{Batty2023},
		we obtain
		\begin{equation}
			\label{eq:fn_estimate1}
			\|f_{n,\alpha}\|_{\mathcal{D}_{s,0}} \geq 
			\frac{\pi}{2^{(3s+4)/2}}
			\sup_{\xi > 0}f_{n,\alpha}(\xi).
		\end{equation}
		Moreover,
		\begin{equation}
			\label{eq:fn_estimate2}
			\sup_{\xi > 0}f_{n,\alpha}(\xi) \geq 
			f_{n,\alpha}(
			n
			) = \left(
			1+ \frac{1}{n}
			\right)^{-n} \frac{1}{
				(
				n+1
				)^{\alpha}} .
		\end{equation}
		Combining \eqref{eq:fn_estimate1} and \eqref{eq:fn_estimate2},
		we conclude that 
		\[
		\|f_{n,\alpha}\|_{\mathcal{D}_{s,0}} \geq \frac{M}{n^{\alpha}}
		\]
		for all $n \in\mathbb{N}$ and some $M >0$.
\end{example}

\section*{Acknowledgments}
This work was supported in part by JSPS KAKENHI Grant Number
24K06866.
The author would like to thank the anonymous referee for numerous helpful comments and suggestions.

\end{document}